\documentclass[12pt]{article}

\usepackage[T1]{fontenc}
\usepackage{lmodern}
\usepackage[utf8]{inputenc}
\usepackage[margin=1.1in]{geometry}
\usepackage{graphicx}
\graphicspath{{Figures/}}
\usepackage{amsmath}
\usepackage{amssymb}
\usepackage{amsthm}
\usepackage[authoryear,round]{natbib}
\usepackage{tabularx}
\usepackage{siunitx}
\usepackage{placeins}
\usepackage{booktabs}
\usepackage[colorlinks=true,linkcolor=blue,citecolor=blue,urlcolor=blue]{hyperref}
\newcolumntype{Y}{>{\centering\arraybackslash}X}

\theoremstyle{plain}
\newtheorem{theorem}{Theorem}
\newtheorem{lemma}{Lemma}
\newtheorem{corollary}{Corollary}
\newtheorem{proposition}{Proposition}

\theoremstyle{definition}
\newtheorem{remark}{Remark}

\title{Correlation Matrices in High Dimensions:\\ The Elliptope as a Sample-Correlation Ensemble}
\author{Peter Reinhard Hansen\thanks{Department of Economics, University of North Carolina at Chapel Hill, 107 Gardner Hall, Chapel Hill, NC 27599-3305. E-mail: \texttt{hansen@unc.edu}.}}
\date{August 4, 2026}

\begin{document}
\maketitle

\begin{abstract}
\noindent The set of $n\times n$ correlation matrices, the elliptope $\mathcal{E}_n$, has volume decaying at the super-exponential rate $\exp\{-\tfrac14 n^2\log n\}$. We characterize where this vanishing volume concentrates. 
A uniform draw is entrywise close to the identity yet globally far from it and nearly singular: its maximum absolute correlation is of order $\sqrt{\log n/n}$, its Frobenius distance is asymptotic to $\sqrt n$, its empirical spectral distribution converges to the Marchenko--Pastur law with ratio one, and its smallest eigenvalue has the exact $\operatorname{Beta}(1,d)$ distribution, where $d=n(n-1)/2$, and is therefore of order $n^{-2}$.
More generally, distinct off-diagonal entries are exactly pairwise independent under every $\operatorname{LKJ}(\eta)$ law. For the uniform law, this yields a Chen--Stein proof of the extreme-correlation point-process limit and an $O(n^{-1})$ total-variation bound for finite-dimensional exceedance counts relative to Poisson laws with their exact finite-$n$ means. We also identify two distinct scales: $\eta_n\asymp n$ alters the limiting spectrum, whereas $\eta_n\asymp n^2$ is needed to keep the Frobenius distance bounded. Finally, for a bounded, centered i.i.d.\ off-diagonal specification, projection to the nearest correlation matrix incurs a squared repair cost asymptotically at least one-half of the squared Frobenius norm of its off-diagonal part.

\medskip
\noindent\textbf{Keywords:} correlation matrices; correlation networks; elliptope; extreme-value theory; LKJ; Marchenko--Pastur; Poisson point process.\\
\textbf{MSC2020 subject classifications:} Primary 60B20, 62H10; secondary 15B48, 60G70.
\end{abstract}

\section{Introduction}

Correlation matrices play a central role in multivariate statistics, econometrics, and applied probability.
Despite their importance, the geometry of the set of all $n\times n$ correlation matrices,
$$
\mathcal{E}_{n}=\{ C\in\mathbb{R}^{n\times n} : C=C^{\prime}, C\succeq 0, C_{ii}=1 \},
$$
is relatively unexplored in high dimensions. The constraints defining $\mathcal{E}_{n}$ are highly nonlinear: the positive semidefinite requirement constrains all off-diagonal elements, while the unit diagonal fixes an affine subspace of dimension $d=n(n-1)/2$. An intersection of the positive semidefinite (PSD) cone $\mathbb{S}_{+}^{n}$ with an affine subspace is a spectrahedron, and the particular spectrahedron that arises from unit diagonal elements is called the elliptope.
The exact closed-form formula for $\operatorname{Vol}(\mathcal{E}_n)$ is a classical result, first derived by \citet{JohnsonNaevdal:1998}, and has been applied mainly to bound the failure rates of rejection-sampling algorithms; its history is reviewed in Section~\ref{sec:volume}. The main contribution of this paper is not that formula itself, but the high-dimensional probabilistic consequences of the restricted-Wishart/LKJ equivalence (Proposition~\ref{prop:WishartGram}): the known identification of the uniform distribution on $\mathcal{E}_n$ with the law of the uncentered sample correlation matrix of $n$ independent Gaussian variables (equivalently, the Pearson correlation matrix at $T=n+2$), together with the refined volume asymptotics, organizes much of the analysis that follows.

Proposition~\ref{prop:VolumeCn} restates the volume formula together with a refined asymptotic expansion, equivalent to that of \citet{BohmHornik:2014}, in the natural dimension $d=n(n-1)/2$.

The vanishing volume of $\mathcal{E}_n$ is not spread evenly over the elliptope. We prove that it concentrates around the identity matrix: as $n$ increases, the algebraic restrictions imposed by positive semidefiniteness make large pairwise correlations exceptional, severely restricting the entrywise max-norm, while simultaneously the mass expands globally in the Frobenius distance. This two-scale geometry is an intrinsic structural feature of the elliptope itself, not an artifact of any particular estimator or prior. Via Proposition~\ref{prop:WishartGram}, a uniform draw from $\mathcal{E}_n$ is exactly distributed as the uncentered sample correlation matrix of $n$ independent Gaussian variables observed $T=n+1$ times, so the empirical eigenvalue distribution of a uniformly drawn correlation matrix converges to the Marchenko--Pastur law with ratio one. The smallest eigenvalue has the exact law $\lambda_{\min}(C)\sim\operatorname{Beta}(1,d)$ (Theorem~\ref{thm:specfloor}), obtained from an elementary homothety; to our knowledge this finite-dimensional law is new.

There is an extensive literature on extreme values of sample correlations. For sample correlation matrices of independent Gaussian variables, the Gumbel limit of the largest off-diagonal entry was established by \citet{Jiang:2004maxentry}; pairwise independence of distinct sample correlations under Gaussian and spherical null models is classical, and follows from the representation of the sample correlations as inner products of independent vectors uniform on the sphere, which \citet{HeroRajaratnam:2011} use to derive the mean number and Poisson behavior of threshold exceedances in their correlation-screening framework; and Poisson point-process convergence for the off-diagonal entries of sample covariance and correlation matrices is developed in generality by \citet[theorem~3.7]{HeinyMikoschYslas:2021}. Via the sample-correlation identification, these results apply at $T=n+2$ and therefore already cover the uniform law on $\mathcal{E}_n$. We do not claim them as new; our contribution on the extremes is threefold. First, Lemma~\ref{lem:pairwise} extends exact pairwise independence from the classical Gaussian/spherical sample-correlation setting to \emph{every} $\operatorname{LKJ}(\eta)$ distribution, including noninteger effective Wishart degrees, where no sample-correlation representation is available; the proof is through the restricted-Wishart/Bartlett representation. Second, this exactness yields a short, self-contained proof of the Poisson point-process limit (Theorem~\ref{thm:PPP}), together with an $O(n^{-1})$ total-variation bound for the finite-dimensional exceedance counts relative to Poisson laws with their exact finite-$n$ means: the covariance contribution from edges sharing a vertex vanishes identically rather than asymptotically, and the entire argument fits in two pages. Third, the point process is put to work as a structurally coherent null model for thresholded correlation networks (Section~\ref{sec:extremes}).

Our other two main results are Theorem~\ref{thm:LKJscaling} and Theorem~\ref{thm:nearest}. Theorem~\ref{thm:LKJscaling} shows that the LKJ$(\eta_n)$ prior has a two-scale phase structure: the Marchenko--Pastur ratio changes only when $\eta_n\asymp n$, whereas the Frobenius distance from the identity is bounded only at the much larger rate $\eta_n\asymp n^2$; these two critical scalings are not visible from the LKJ density alone, but emerge from the sample-correlation identification. Theorem~\ref{thm:nearest} establishes that replacing an entrywise-specified matrix by its nearest valid correlation matrix is not a minor correction: the squared Frobenius repair cost is asymptotically at least one-half of the squared Frobenius norm of the off-diagonal part, a lower bound that follows from the PSD projection inequality and the Wigner semicircle law for bounded, centered i.i.d.\ entrywise specifications with positive variance.

The volume and structure of $\mathcal{E}_{n}$ are relevant for several reasons.
First, they help explain why empirical correlation matrices are noisy, ill-conditioned, and strongly shaped by random-matrix effects whenever $n$ is large relative to the sample size. When $T\asymp n$, the accumulated entrywise sampling noise produces a Frobenius displacement from $I_n$ of order $\sqrt{n}$, which is exactly the global scale of the elliptope itself (Proposition~\ref{prop:C-I}); and at the critical sample size $T=n+1$ (uncentered), equivalently $T=n+2$ (centered), the sample correlation matrix is not merely at that scale but is exactly uniform on $\mathcal{E}_n$ (Proposition~\ref{prop:WishartGram}). The associated spectral law follows from the Wishart/LKJ identity and is the Marchenko--Pastur law of Theorem~\ref{thm:MP}.
Second, the volume of $\mathcal{E}_{n}$ quantifies how rapidly semidefinite feasible sets shrink in high dimensions. The elliptope is arguably the simplest nontrivial spectrahedron, and its vanishing volume illustrates how rapidly the feasible set shrinks as positivity constraints interact with linear constraints. This sheds light on the structure of feasible sets in high-dimensional semidefinite programs, where feasible regions often become extremely thin even when described by relatively few constraints. Related geometric constraints appear in machine learning, including PSD-constrained network layers \citep{HuangVanGool:2017} and correlation-parameterized kernels in Gaussian process models.
Third, as a consequence of this volume collapse, naive parameterizations of correlation matrices, such as choosing the off-diagonal entries independently in $[-1,1]$, almost never yield a positive semidefinite matrix when $n$ is large. In practice, one therefore relies on parameterizations that enforce positive semidefiniteness by construction, such as the Generalized Fisher Transformation (GFT) by \citet{ArchakovHansen:Correlation}, partial correlations \citep{Joe:2006}, vines \citep{LewandowskiKurowickaJoe:2009}, or hyperspherical angles \citep{PourahmadiWang:2015}; see also \citet{ArchakovHansenLuo-RandomCorr:2024}.
Fourth, in Bayesian modeling, prior distributions over correlation matrices have the elliptope as their support. The separation strategy of \citet{BarnardMcCullochMeng:2000}, which models a covariance matrix as $\Sigma=DCD$ with independent priors on the standard deviations $D$ and the correlation matrix $C$, makes the geometry of $\mathcal{E}_n$ directly relevant to prior elicitation. This geometry causes volume-based priors, including fixed-$\eta$ LKJ laws, to exhibit dimension-induced shrinkage of individual correlations. However, because the space simultaneously expands globally in the Frobenius norm, the probability mass is often pushed outward toward the singular boundary. This tension shapes the behavior of priors, such as the LKJ$(\eta)$ distribution, in high dimensions, and it is a consequence of the underlying space rather than of the prior specification itself.
Fifth, these constraints help explain the well-documented instability of minimum-variance portfolios: the portfolio solution requires the precision matrix $\Sigma^{-1}$, and in the decomposition $\Sigma=DCD$ the boundary-seeking behavior of the noisy empirical correlation matrix drives large and erratic changes in portfolio weights even when the volatilities in $D$ are perfectly estimated; Section~\ref{sec:precision} quantifies this.

\section{Theoretical Results on Volume}\label{sec:volume}
The set of valid $n\times n$ correlation matrices $\mathcal{E}_n$ is a subset of the hypercube $[-1,1]^{d}$, 
where $d=n(n-1)/2$. 
We will use both the $n$- and $d$-parameterizations as convenient, where the inverse relation is $n=\frac{1+\sqrt{1+8d}}{2}$. All limits and asymptotic statements are as $n\rightarrow\infty$, equivalently $d\rightarrow\infty$, unless stated otherwise.

The exact formula (\ref{eq:VolCn}) is a classical result, first derived by \citet{JohnsonNaevdal:1998} via the Schur parametrization of positive semidefinite matrices; equivalent expressions were subsequently obtained by \citet{Joe:2006} and \citet{PourahmadiWang:2015}, and rediscovered via the hypersphere decomposition by \citet{EastmanHollisNumpacharoenSchlieper:2016}, extending \citet{RousseeuwMolenberghs:1994}; see \citet{ForresterZhang:2020} for a survey. The following proposition states this classical formula together with the asymptotic expansion (\ref{eq:VolCnAsym}), which is a reparameterization of the expansion of \citet{BohmHornik:2014}. Throughout, $\zeta(s)$ denotes the Riemann zeta function; $\zeta(-1)=-1/12$ and $\zeta^\prime(-1)\approx -0.1654$.

\begin{proposition}\label{prop:VolumeCn}The Lebesgue volume of $\mathcal{E}_n$ satisfies the recursion
$$
\operatorname{Vol}(\mathcal{E}_{n+1})
=\operatorname{Vol}(\mathcal{E}_{n})\times
\left[B(\tfrac{n+1}{2},\tfrac{1}{2})\right]^{n},\qquad n\geq1,
$$
with $\operatorname{Vol}({\mathcal{E}_{1}})\equiv 1$; consequently,
\begin{equation}\label{eq:VolCn}
\operatorname{Vol}(\mathcal{E}_n)
=\pi^{\frac{n(n-1)}{4}}
\prod_{j=1}^{n}\frac{\Gamma\big(\tfrac{j+1}{2}\big)}{\Gamma\big(\tfrac{n+1}{2}\big)},
\qquad\text{for }n\geq 1.
\end{equation}
For large $n$, $\log\operatorname{Vol}(\mathcal{E}_n)=-\tfrac{n^2}{4}\log \frac{n}{2\pi\sqrt{e}}+O(n\log n)$ and in terms of $d=\tfrac{n(n-1)}{2}$ we have
\begin{equation}\label{eq:VolCnAsym}
\log\operatorname{Vol}(\mathcal{E}_n)
=-\tfrac{d}{4}\log(\tfrac{d}{2e\pi^2})-\tfrac{1}{2\sqrt{2}}\sqrt{d}-\tfrac{1}{48}\log d + \kappa
+O\big(\tfrac{1}{\sqrt{d}}\big), 
\end{equation}
where $\kappa=\tfrac{1}{4}\left[(1+\zeta(-1))\log 2-\zeta(-1)+2\zeta^\prime(-1)\right]\approx 0.097$. 
\end{proposition}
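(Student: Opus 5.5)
The proof has three parts: the recursion from a Schur-complement fibration, the closed form by telescoping, and the asymptotics from Barnes $G$-function expansions.

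\textbf{Recursion.} Write $C_{n+1}=\begin{pmatrix} C & b\\ b' & 1\end{pmatrix}$ with $C\in\mathcal{E}_n$ and $b\in\mathbb{R}^n$.

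For $C\succ0$ (a full-measure set), $C_{n+1}\succeq0$ if and only if $b'C^{-1}b\le 1$. So the fibre over $C$ is the ellipsoid $\{b: b'C^{-1}b\le1\}$, whose volume is $\omega_n\det(C)^{1/2}$, where $\omega_n=\pi^{n/2}/\Gamma(\tfrac n2+1)$. By Fubini,
\[
\operatorname{Vol}(\mathcal{E}_{n+1})=\omega_n\int_{\mathcal{E}_n}\det(C)^{1/2}\,dC .
\]
This integral is not directly a constant times $\operatorname{Vol}(\mathcal{E}_n)$, so I would strengthen the induction hypothesis and compute the moments
\[
M_n(\alpha)=\int_{\mathcal{E}_n}\det(C)^{\alpha}\,dC .
\]
Applying the same fibration, with $\det C_{n+1}=\det C\,(1-b'C^{-1}b)$ and the substitution $b=C^{1/2}u$ with $\|u\|\le1$, gives
\[
M_{n+1}(\alpha)=M_n\big(\alpha+\tfrac12\big)\int_{\|u\|\le1}(1-\|u\|^2)^{\alpha}\,du
= M_n\big(\alpha+\tfrac12\big)\,\pi^{n/2}\frac{\Gamma(\alpha+1)}{\Gamma(\alpha+1+\tfrac n2)} .
\]
Iterating down to $M_1\equiv1$ yields $M_n(\alpha)$ as a product of Gamma ratios.

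Setting $\alpha=0$ and comparing consecutive $n$, after cancellation via the duplication formula, should collapse to the factor $\big[B(\tfrac{n+1}2,\tfrac12)\big]^n=\big[\sqrt\pi\,\Gamma(\tfrac{n+1}2)/\Gamma(\tfrac n2+1)\big]^n$.

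An alternative route gives the recursion more directly. Under the partial-correlation/vine parametrization \citep{Joe:2006,LewandowskiKurowickaJoe:2009}, the map to partial correlations $\rho_{ij\mid\cdot}\in(-1,1)^d$ has Jacobian $\prod_{k}\prod(1-\rho^2)^{(n-1-k)/2}$. The volume therefore factors into one-dimensional integrals
\[
\int_{-1}^1(1-x^2)^{m/2}\,dx=B\big(\tfrac12,\tfrac m2+1\big).
\]
Adding a new variable contributes $n$ partial correlations at levels $k=0,\dots,n-1$, and the product of the corresponding Beta integrals should telescope to the stated factor. I would pick whichever of the two routes gives the cleanest bookkeeping; matching the Beta exponents exactly is the delicate point.

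\textbf{Closed form.} Induct on $n$. The ratio of (\ref{eq:VolCn}) at $n+1$ and at $n$ is
\[
\pi^{n/2}\,\Gamma\big(\tfrac{n+2}2\big)\,\frac{\Gamma\big(\tfrac{n+1}2\big)^{n}}{\Gamma\big(\tfrac{n+2}2\big)^{n+1}}
=\Big[\sqrt\pi\,\Gamma\big(\tfrac{n+1}2\big)/\Gamma\big(\tfrac{n+2}2\big)\Big]^n,
\]
which is exactly $[B(\tfrac{n+1}2,\tfrac12)]^n$.

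\textbf{Asymptotics.} Take logs of (\ref{eq:VolCn}):
\[
\log\operatorname{Vol}(\mathcal{E}_n)=\tfrac{n(n-1)}4\log\pi+\sum_{j=1}^n\log\Gamma\big(\tfrac{j+1}2\big)-n\log\Gamma\big(\tfrac{n+1}2\big).
\]
Split the sum by the parity of $j$. The terms then become $\sum\log\Gamma(m)$ and $\sum\log\Gamma(m+\tfrac12)$, which are Barnes $G$-function values: $G(N+1)=\prod_{m<N+1}\Gamma(m)$, and the half-integer analogue follows from $G(N+\tfrac32)/G(\tfrac32)$.

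Next, use the known expansion
\[
\log G(z+1)=\tfrac{z^2}2\log z-\tfrac34z^2+\tfrac z2\log2\pi-\tfrac1{12}\log z+\zeta'(-1)+O(z^{-2}),
\]
together with Stirling for $n\log\Gamma(\tfrac{n+1}2)$. The leading terms give $-\tfrac{n^2}4\log\tfrac n{2\pi\sqrt e}+O(n\log n)$.

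For (\ref{eq:VolCnAsym}), substitute $n=\tfrac12+\sqrt{2d+\tfrac14}$ and expand in powers of $d^{-1/2}$. The constants collect $\zeta'(-1)$, $\log G(\tfrac32)$, and $\log2$ terms; $\log G(\tfrac32)$ is itself expressible through $\zeta'(-1)$ and $\log\pi$, which produces $\kappa$.

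The main obstacle is this last step. The $n\log n$ and $n$ terms must cancel exactly against the $d$-reparametrization so that only $-\tfrac1{2\sqrt2}\sqrt d$ survives, and the constant must reduce to $\kappa$. I would first do the expansion separately for even and odd $n$, then verify both agree. As a cross-check I would compare the result with \citet{BohmHornik:2014} and numerically at moderate $n$.
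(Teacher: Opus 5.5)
Your proposal is correct. For the recursion and closed form it takes a genuinely different route from the paper; for the asymptotics it follows the paper.

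\textbf{Recursion and closed form.} The paper does not derive the recursion. It quotes \citet{JohnsonNaevdal:1998}, $\operatorname{Vol}(\mathcal{E}_n)=2\prod_{j=2}^{n-1}(2I_j)^j$ with Wallis-type products $I_j$, checks $B(\tfrac{n+1}{2},\tfrac12)=2I_n$ separately for even and odd $n$, and then telescopes.

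Your Schur-complement fibration, with the strengthened hypothesis $M_n(\alpha)=\int_{\mathcal{E}_n}\det(C)^{\alpha}\,dC$, is self-contained and works as written. Iterating it from $M_1\equiv1$ gives $\operatorname{Vol}(\mathcal{E}_n)=M_n(0)=\prod_{k=1}^{n-1}\pi^{k/2}\Gamma(\tfrac{n+1-k}{2})/\Gamma(\tfrac{n+1}{2})$. This is exactly (\ref{eq:VolCn}) after the substitution $j=n-k$, since the $j=n$ factor equals one. So the natural order is: moment recursion, then closed form, then the recursion from your own ratio computation. This needs only telescoping, not the duplication formula.

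If you keep the vine alternative, be careful with it. Adding a variable also raises every existing Jacobian exponent by $\tfrac12$, so the new partial correlations alone do not produce the factor. The clean bookkeeping is that levels $2,\dots,n$ in dimension $n+1$ reproduce, in count and exponent, levels $1,\dots,n-1$ in dimension $n$. The new factor is then the $n$ raw correlations, each contributing $B(\tfrac12,\tfrac{n+1}{2})$.

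Your route yields the whole family $M_n(\alpha)=Z_n(\alpha+1)$ of LKJ normalizing constants. That is precisely the normalizing-constant link the paper mentions after Proposition~\ref{prop:WishartGram} but does not prove. The paper's route is shorter because it outsources the geometry.

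\textbf{Asymptotics.} Your plan is the paper's Barnes-$G$ argument: split by parity into $G(p+1)G(q+\tfrac32)/G(\tfrac32)$, use the same $\log G$ and Stirling expansions, and rewrite $\log G(\tfrac32)$ via $\zeta'(-1)$ and $\log\pi$. The paper is more economical for odd $n$. Instead of a second full expansion with $p\neq q$, it pushes the even-$n$ result one step through the recursion. It checks that $n\log B(\tfrac{n+1}{2},\tfrac12)$ and $F(d_n+n)-F(d_n)$ agree up to $O(n^{-1})$, where $F$ is the right side of (\ref{eq:VolCnAsym}) without the remainder and $d_n=n(n-1)/2$.
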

The expansion of \citet{BohmHornik:2014}, obtained in their analysis of rejection sampling, is stated in the $n$-parameterization for the hypercube probability $\operatorname{Vol}(\mathcal{E}_n)/2^d$; its constant involves the Glaisher--Kinkelin constant $A$, which is equivalent to $\kappa$ via $\log A=\tfrac{1}{12}-\zeta^\prime(-1)$. The form (\ref{eq:VolCnAsym}) is compact in the intrinsic dimension and accurate even for moderate $n$: its absolute error is below $0.02$ already at $n=5$ and below $10^{-3}$ by $n=100$, in line with the $O(d^{-1/2})$ remainder, whereas the leading-order term alone has absolute error that diverges with $n$; see Figure~\ref{fig:VolCorr} in the Supplement. Exact closed-form expressions and numerical values for selected dimensions up to $n=30$ are given in Table~\ref{tab:CnVolume} in the Supplement. For perspective: at $n=10$ the feasible set occupies roughly $2\times10^{-14}$ of the hypercube $[-1,1]^{45}$; at $n=20$ this drops to roughly $4\times10^{-86}$; and by $n=30$ to below $10^{-233}$.

\section{Concentration, Spectral Structure, and Extreme Correlations}\label{sec:concentration}

While the volume of the elliptope shrinks rapidly, we now show where this vanishing volume concentrates. The answer depends on the norm. Proposition~\ref{prop:entrywise} shows that almost all of $\mathcal{E}_n$ lies in a thin entrywise neighborhood of $I_n$: the largest correlation in a typical matrix is of order $\sqrt{\log n/n}$ and vanishes as $n\rightarrow\infty$. Proposition~\ref{prop:C-I} shows that the same typical matrix is nevertheless far from the identity in the Frobenius norm, at a distance of order $\sqrt{n}$, and Theorem~\ref{thm:specfloor} gives the exact law of the smallest eigenvalue, $\lambda_{\min}(C)\sim\operatorname{Beta}(1,d)$. There is no contradiction: individually the correlations are tiny, but there are $d=n(n-1)/2$ of them, and collectively they add up. An exact identity connects these results to sample correlation matrices: the uniform distribution on $\mathcal{E}_n$ is the distribution of an uncentered sample correlation matrix based on $T=n+1$ Gaussian observations (Proposition~\ref{prop:WishartGram}), so the empirical eigenvalue distribution of a typical correlation matrix converges to the Marchenko--Pastur law (Theorem~\ref{thm:MP}). The section ends with the Poisson point-process limit of the extreme correlations (Theorem~\ref{thm:PPP}).

\subsection{Entrywise and Frobenius Concentration}\label{sec:twoscale}

For $r\in[0,1]$ we define
$$\mathcal{E}_n(r)=\{C\in \mathcal{E}_n:\max_{i<j}|C_{ij}|\leq r\},$$
which is the subset of $\mathcal{E}_n=\mathcal{E}_n(1)$ about the identity matrix, $\mathcal{E}_n(0)=\{I_n\}$.

\begin{proposition} \label{prop:entrywise}
    For any fixed $r\in(0,1)$,
    $$\log\left(1-\frac{\operatorname{Vol}(\mathcal{E}_n(r))}{\operatorname{Vol}(\mathcal{E}_n)}\right)=-\frac{n}{2}\left|\log(1-r^2)\right|+O(\log n),\quad\text{as }n\rightarrow\infty.$$
\end{proposition}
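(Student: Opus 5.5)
The plan is to rewrite the volume ratio as a probability. If $C$ is uniform on $\mathcal{E}_n$, then
$$1-\frac{\operatorname{Vol}(\mathcal{E}_n(r))}{\operatorname{Vol}(\mathcal{E}_n)}=P\Big(\max_{i<j}|C_{ij}|>r\Big).$$
I would sandwich this probability between a single-entry tail and $d$ times that tail. Because $\log d=O(\log n)$, both bounds agree to the stated precision. The only real input is therefore the exact marginal law of one off-diagonal entry, $C_{12}$; by permutation invariance of the uniform law, every off-diagonal entry has the same law.

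\emph{Step 1 (marginal law).} I would show that $C_{12}$ has density on $(-1,1)$ given by
$$f_n(x)=\frac{(1-x^2)^{n/2-1}}{B(\tfrac12,\tfrac n2)}.$$
The quickest route is the sample-correlation identification announced in the introduction (Proposition~\ref{prop:WishartGram}): $C$ is the uncentered sample correlation matrix of $n$ independent Gaussian vectors in $\mathbb{R}^{T}$ with $T=n+1$. Then $C_{12}=u_1^{\prime}u_2$ for independent $u_1,u_2$ uniform on $S^{T-1}$. The standard density of an inner product of independent uniform unit vectors in $\mathbb{R}^{T}$ is proportional to $(1-x^2)^{(T-3)/2}=(1-x^2)^{n/2-1}$.

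If I want to avoid forward references, I would obtain the same density from the partial-correlation (C-vine) parametrization of \citet{Joe:2006}, which underlies the recursion in Proposition~\ref{prop:VolumeCn}. In that parametrization the order-zero correlation $C_{12}$ carries Jacobian weight $(1-x^2)^{(n-2)/2}$ under Lebesgue measure on $\mathcal{E}_n$.

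\emph{Step 2 (tail asymptotics).} Next I would estimate
$$p_n(r)=P(|C_{12}|>r)=\frac{2}{B(\tfrac12,\tfrac n2)}\int_r^1(1-x^2)^{n/2-1}\,dx.$$
By Stirling, $B(\tfrac12,\tfrac n2)\sim\sqrt{2\pi/n}$, so the normalizer contributes only $O(\log n)$ to the logarithm.

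For the integral, the upper bound uses $x/r\ge 1$ on $[r,1]$:
$$\int_r^1(1-x^2)^{n/2-1}\,dx\le \frac1r\int_r^1 x(1-x^2)^{n/2-1}\,dx=\frac{(1-r^2)^{n/2}}{nr}.$$
The lower bound integrates over the short interval $[r,r+1/n]$, where $(1-x^2)^{n/2-1}\ge c_r(1-r^2)^{n/2}$ uniformly for fixed $r<1$. This gives the integral $\ge c_r(1-r^2)^{n/2}/n$.

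Combining the two bounds,
$$\log p_n(r)=\tfrac n2\log(1-r^2)+O(\log n)=-\tfrac n2\,|\log(1-r^2)|+O(\log n).$$

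\emph{Step 3 (sandwich).} Trivially $p_n(r)\le P(\max_{i<j}|C_{ij}|>r)$. The union bound over the $d$ pairs gives $P(\max_{i<j}|C_{ij}|>r)\le d\,p_n(r)$. Since $\log d\le 2\log n$, taking logarithms yields the claim.

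The main obstacle is Step 1: rigorously pinning down the exponent $n/2-1$ of the one-dimensional marginal, since an off-by-one in that exponent would change the leading constant. Beyond that, the only points needing care are keeping the polynomial prefactors inside $O(\log n)$ and noting that the constants $c_r$ and $1/r$ are fixed because $r\in(0,1)$ is fixed.
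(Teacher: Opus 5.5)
Your proposal is correct and follows the paper's proof essentially step for step: the same reduction to $\Pr(\max_{i<j}|C_{ij}|>r)$, the same sandwich $p_n(r)\le\Pr(\max_{i<j}|C_{ij}|>r)\le d\,p_n(r)$, and the same $\operatorname{Beta}(\tfrac n2,\tfrac n2)$ marginal, whose normalizer $1/B(\tfrac12,\tfrac n2)$ is exactly the paper's $Z_n$. The differences are cosmetic: your upper bound uses $x/r\ge1$ and integrates exactly, where the paper uses the tangent-line bound from concavity of $\log(1-x^2)$, and your window $[r,r+1/n]$ replaces the paper's $[r,r+\ell]$ with $\ell=(1-r^2)/((n-2)r)$. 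Also, Step~1 is less delicate than you suggest, because a fixed shift in the exponent $n/2-1$ changes $\log p_n(r)$ only by $O(1)$, which the $O(\log n)$ term absorbs.
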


The statement is at the exponential scale: the fraction of $\operatorname{Vol}(\mathcal{E}_n)$ that lies outside $\mathcal{E}_n(r)$ vanishes at the rate $\exp\{-\tfrac{n}{2}|\log(1-r^2)|\}$, up to a polynomial factor that is absorbed by the $O(\log n)$ term. The result follows from the Beta tail of a single off-diagonal entry, $p_n(r)=\Pr(|C_{12}|>r)$, combined with Boole's inequality. The proposition shows that almost all high-dimensional correlation matrices are in the vicinity of the identity matrix. The convergence is slow, however, as the following typical-value calculation for the largest correlation coefficient shows.

Let $N_r=\sum_{i<j}1_{\{|C_{ij}|>r\}}$. By linearity of expectation and the symmetry of the uniform distribution, $\mathbb{E}(N_r)=d\Pr(|C_{ij}|>r)$ exactly. Note that $\Pr(\max_{1\leq i<j\leq n}|C_{ij}|>r)=\Pr(N_r\geq 1)$. We can select $r$ such that $\mathbb{E}(N_r)\approx 1$, by solving $\Pr(|C_{ij}|>r)=1/d$. Under the uniform distribution on $\mathcal{E}_n$ we have $(C_{ij}+1)/2\sim \operatorname{Beta}(\tfrac{n}{2},\tfrac{n}{2})$, so $\operatorname{var}(C_{ij})=\tfrac{1}{n+1}$ and $\sqrt{n+1}C_{ij}\overset{d}{\rightarrow}N(0,1)$ as $n\rightarrow\infty$; we use this Gaussian approximation to evaluate $\Pr(|C_{ij}|>r)$. 

Next, we use the standard extreme-value expansion: the solution to $\Phi(-q)=\tfrac{1}{m}$ is
$$q=\sqrt{2\log m} -\frac{\log\log m +\log4\pi}{2\sqrt{2\log m}} + o\left(\tfrac{1}{\sqrt{\log m}}\right).$$
With $m=2d\approx n^2$ and $\sqrt{n+1}r =  q$
we arrive at 
\begin{equation}
M_n\equiv\max_{1\leq i<j\leq n}|C_{ij}|\approx\left(\frac{1}{n+1}\log\frac{n^4}{8\pi\log n}\right)^{1/2}.
\label{eq:maxrhoapprox}
\end{equation}

The approximation (\ref{eq:maxrhoapprox}) is shown in Figure~\ref{fig:CorrNearIdentity}. We have simulated random correlation matrices of varying dimensions to verify this asymptotic behavior, showing close agreement with the theoretical approximation.
\begin{figure}[!htb]
\centering{}\includegraphics[width=0.7\textwidth]{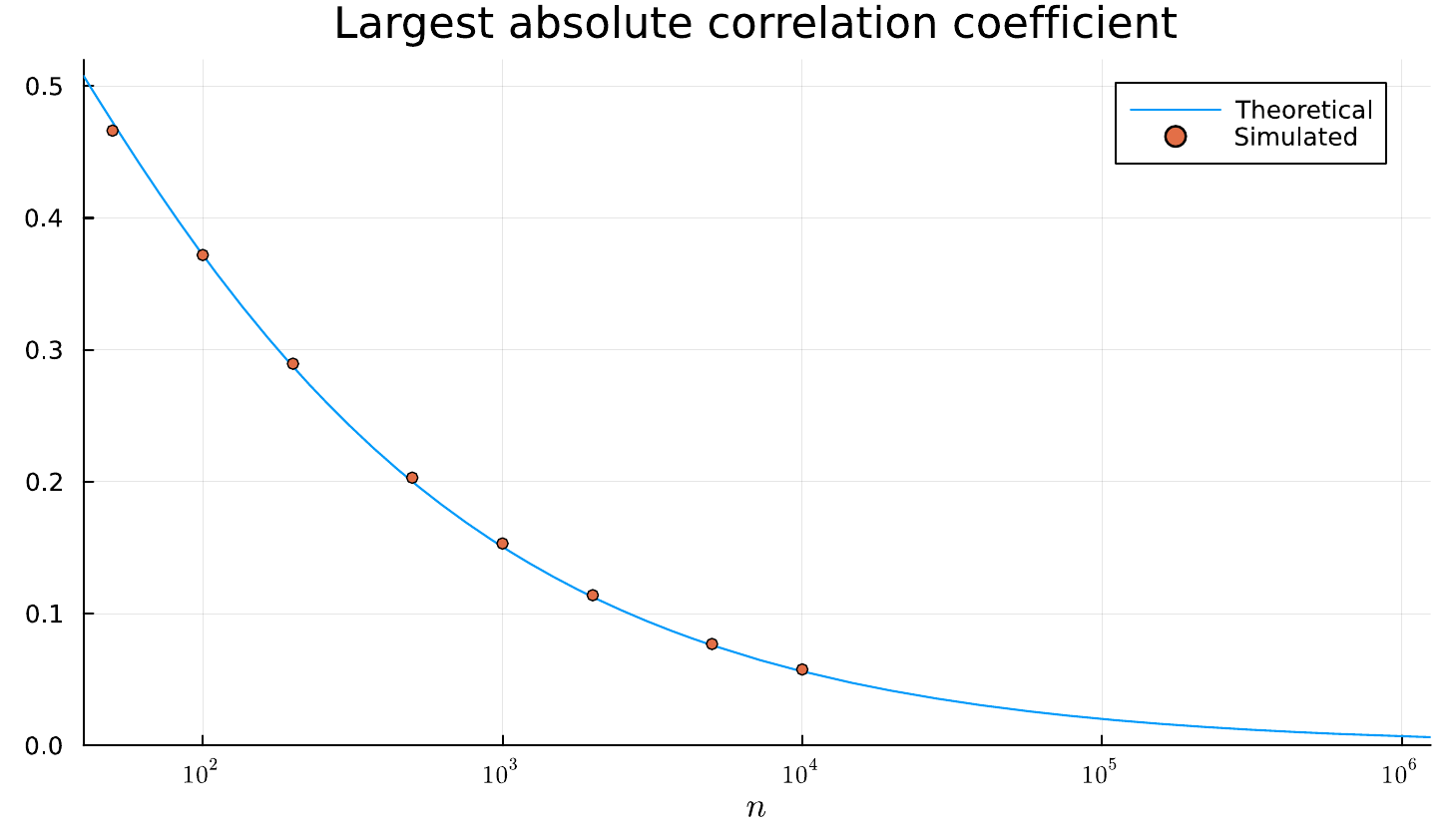}
\caption{{\small{}The largest absolute correlation coefficient in a uniformly distributed $n\times n$ correlation matrix. The theoretical approximation (\ref{eq:maxrhoapprox}) is the solid curve; circles show simulated averages of $M_n$ over independent draws from the uniform distribution on $\mathcal{E}_n$, with the number of replications decreasing from $10{,}000$ at $n=50$ to $50$ draws for $n=10{,}000$. Fewer replications are needed in high dimensions because $M_n$ concentrates, cf.\ Corollary~\ref{cor:gumbel}.\label{fig:CorrNearIdentity}}}
\end{figure}

The approximation (\ref{eq:maxrhoapprox}) is a heuristic typical-value calculation; the rigorous fluctuation limit is the Gumbel law of Corollary~\ref{cor:gumbel} in Section~\ref{sec:extremes}.

Entrywise concentration does not, however, imply global concentration. The next result measures the typical distance from $I_n$ in the Frobenius norm.

\begin{proposition} \label{prop:C-I}
Let $C$ be drawn uniformly from $\mathcal{E}_n$. Then
$$ \mathbb{E}\|C - I_n\|_F^2 = \frac{n(n-1)}{n+1}=n(1+o(1)), $$
and $\|C-I_n\|_F^2/\mathbb{E}\|C-I_n\|_F^2\rightarrow1$ in probability as $n\rightarrow\infty$.
\end{proposition}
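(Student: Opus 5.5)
The expectation is an exact computation from the marginal law quoted above. Since $C_{ii}=1$, we have $\|C-I_n\|_F^2=2\sum_{i<j}C_{ij}^2$. Under the uniform law, $(C_{ij}+1)/2\sim\operatorname{Beta}(\tfrac n2,\tfrac n2)$, so $\mathbb{E}C_{ij}=0$ and $\mathbb{E}C_{ij}^2=\operatorname{var}(C_{ij})=1/(n+1)$. Summing over the $d=n(n-1)/2$ pairs gives
$$\mathbb{E}\|C-I_n\|_F^2=2d/(n+1)=n(n-1)/(n+1)=n(1+o(1)).$$

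For the concentration statement, I would use Chebyshev's inequality. It suffices to show that $\operatorname{var}(S)=o(d^2/n^2)$, where $S=\sum_{i<j}C_{ij}^2$, because $(\mathbb{E}S)^2\asymp n^2$. Write
$$\operatorname{var}(S)=\sum_{(i,j)}\operatorname{var}(C_{ij}^2)+\sum_{(i,j)\neq(k,l)}\operatorname{cov}(C_{ij}^2,C_{kl}^2).$$
The diagonal terms are easy. The Beta fourth moment gives $\mathbb{E}C_{ij}^4=3/((n+1)(n+3))$, so $\operatorname{var}(C_{ij}^2)=O(n^{-2})$, and these terms contribute $O(d\,n^{-2})=O(1)$.

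The main obstacle is the cross terms. Pairwise independence (the Lemma~\ref{lem:pairwise} announced in the introduction) only makes $\operatorname{cov}(C_{ij}^2,C_{kl}^2)$ vanish for \emph{distinct} pairs. Covariances of squares, however, involve only the joint law of two entries, so pairwise independence is exactly enough: every cross covariance is zero. To avoid depending on a result stated later, I would justify this directly through Proposition~\ref{prop:WishartGram}. Represent $C_{ij}=u_i'u_j$, where the $u_i$ are i.i.d.\ uniform on the sphere $S^{n}$ in $\mathbb{R}^{n+1}$; normalizing i.i.d.\ Gaussian columns with $T=n+1$ gives exactly this.

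With this representation, the two cases follow quickly. If the pairs are disjoint, the entries are trivially independent. If they share an index, say $C_{ij}$ and $C_{ik}$ with $j\neq k$, condition on $u_i$. Given $u_i$, the variables $u_i'u_j$ and $u_i'u_k$ are independent, and by rotational invariance each has a law that does not depend on $u_i$. Hence they are unconditionally independent. Thus $\operatorname{var}(S)=O(1)$, so
$$\operatorname{var}\bigl(\|C-I_n\|_F^2/\mathbb{E}\|C-I_n\|_F^2\bigr)=O(n^{-2})\to0,$$
which yields the ratio convergence in probability, indeed in $L^2$.
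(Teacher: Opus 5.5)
Your proof is correct and follows the paper's argument: the exact mean comes from the $\operatorname{Beta}(\tfrac n2,\tfrac n2)$ marginal, and then Chebyshev applies because pairwise independence makes all cross covariances of squares vanish. The differences are minor. You verify pairwise independence directly from the spherical Gram representation by conditioning on the shared vector, where the paper cites Lemma~\ref{lem:pairwise}, whose proof goes through the Bartlett factor. You also use the exact fourth moment $3/((n+1)(n+3))$ to get $\operatorname{var}(S)=O(1)$, where the paper settles for the cruder bound $\operatorname{var}(C_{ij}^2)\le\mathbb{E}C_{ij}^2$, giving $\operatorname{var}(S)\le d/(n+1)$.
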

Proposition~\ref{prop:entrywise} and Proposition~\ref{prop:C-I} together characterize the typical correlation matrix at two different scales. Entrywise, it is nearly indistinguishable from the identity: by (\ref{eq:maxrhoapprox}), even its single largest correlation is only of order $\sqrt{\log n/n}$. Globally, it is far from the identity: each of the $d=n(n-1)/2$ off-diagonal pairs contributes $\mathbb{E}[C_{ij}^{2}]=\tfrac{1}{n+1}$ to the squared Frobenius distance, and while every individual term is negligible, their sum, $2d/(n+1)\sim n$, is not. In summary,
$$
\max_{i<j}|C_{ij}| \approx 2\sqrt{\tfrac{\log n}{n}}\rightarrow 0,
\qquad\text{while}\qquad
\|C-I_n\|_F \approx \sqrt{n}\rightarrow\infty.
$$
A typical high-dimensional correlation matrix is therefore locally near, but globally far from, the identity matrix. This two-scale structure resolves the apparent tension between entrywise shrinkage and determinant behavior, and it is the key to understanding why empirical eigenvalue distributions behave the way they do in high dimensions, as we show next.

\subsection{Spectral Structure and the Sample-Correlation Representation}\label{sec:spectral}

A third measurement looks at the spectrum directly. The following identity is exact at every $n$; no asymptotics are involved.

\begin{theorem}[Spectral floor]\label{thm:specfloor}
For $n\geq2$ and $x\in[0,1]$,
$$
\operatorname{Vol}\{C\in\mathcal{E}_n:\lambda_{\min}(C)\geq x\}=(1-x)^{d}\operatorname{Vol}(\mathcal{E}_n).
$$
\end{theorem}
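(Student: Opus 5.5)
The paper hints at an "elementary homothety," and that is the route I will take. For $x\in[0,1)$ I define the affine map $\phi_x:C\mapsto (1-x)C+xI_n$ on the affine space of symmetric matrices with unit diagonal. The claim is that $\phi_x$ is a bijection from $\mathcal{E}_n$ onto $\{C\in\mathcal{E}_n:\lambda_{\min}(C)\ge x\}$. Since $\phi_x$ scales every off-diagonal coordinate by $(1-x)$ and fixes the diagonal, its Jacobian in the $d$ free coordinates is $(1-x)^d$. The volume identity then follows from the change-of-variables formula. The case $x=1$ is trivial: the set is $\{I_n\}$, which has volume $0=(1-1)^d\operatorname{Vol}(\mathcal{E}_n)$.

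The key steps, in order, are these.

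\begin{enumerate}
\item \emph{Unit diagonal is preserved.} We have $\phi_x(C)_{ii}=(1-x)+x=1$, and symmetry is clearly preserved.
\item \emph{Spectra shift affinely.} The eigenvalues of $\phi_x(C)$ are $(1-x)\lambda_k(C)+x$, because $C$ and $I_n$ commute. Hence $C\succeq0$ gives $\lambda_{\min}(\phi_x(C))\ge x$, so the image lies in the target set.
\item \emph{Surjectivity.} Given $D\in\mathcal{E}_n$ with $\lambda_{\min}(D)\ge x$, set
$$C=\frac{D-xI_n}{1-x}.$$
This $C$ has unit diagonal, since $(1-x)/(1-x)=1$, and eigenvalues $(\lambda_k(D)-x)/(1-x)\ge0$. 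So $C\in\mathcal{E}_n$ and $\phi_x(C)=D$.
\item \emph{Injectivity.} This is immediate, because $1-x>0$.
\item \emph{Volume.} Lebesgue measure on $\mathcal{E}_n$ is taken in the coordinates $(C_{ij})_{i<j}\in\mathbb{R}^d$. In these coordinates $\phi_x$ is the linear scaling $v\mapsto(1-x)v$, with determinant $(1-x)^d$.
\end{enumerate}

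I do not expect a genuine obstacle. The only point needing care is the observation that the diagonal shift by $xI_n$ exactly compensates the shrinkage of the diagonal, so the map stays inside the unit-diagonal affine slice while acting as a pure dilation on the off-diagonal coordinates. One should also check that the boundary cases (equality $\lambda_{\min}=x$) have measure zero or are handled consistently. They are: the bijection maps closed sets to closed sets, so no measure-zero issue arises.

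As a corollary, the normalized statement reads $\Pr(\lambda_{\min}(C)\ge x)=(1-x)^d$ under the uniform law. This is exactly the survival function of $\operatorname{Beta}(1,d)$.
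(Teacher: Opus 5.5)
Your proposal is correct and matches the paper's proof: both establish the exact set identity $\{C\in\mathcal{E}_n:\lambda_{\min}(C)\geq x\}=xI_n+(1-x)\mathcal{E}_n$, read off the Jacobian $(1-x)^d$ of the dilation on the off-diagonal coordinates, and treat $x=1$ separately. Your closing remark about boundary cases is unnecessary; the set identity is exact, including the points with $\lambda_{\min}=x$, so no measure-zero argument is needed.
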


Under the uniform law, $\Pr(\lambda_{\min}(C)\geq x)=(1-x)^d$; equivalently, $\lambda_{\min}(C)\sim\operatorname{Beta}(1,d)$. Despite the simple proof, given in the appendix, this exact finite-dimensional law is, to our knowledge, new for the elliptope. The underlying homothety is elementary, and has a close analogue for complex density matrices under the Hilbert--Schmidt measure \citep{MajumdarBohigasLakshminarayan:2008}. Two consequences follow. First, the two constraints live on different exponential scales: the entrywise ceiling $\max_{i<j}|C_{ij}|\leq r$ excludes only a fraction $\exp\{-\tfrac{n}{2}|\log(1-r^2)|+O(\log n)\}$ of the volume (Proposition~\ref{prop:entrywise}), whereas the spectral floor $\lambda_{\min}(C)\geq x$ is highly restrictive: it is satisfied by only the fraction $(1-x)^d$, which decays at rate $n^2$ in the exponent. Second, $\Pr(n^2\lambda_{\min}(C)>t)=(1-t/n^2)^d\rightarrow e^{-t/2}$, so $n^2\lambda_{\min}(C)$ converges in distribution to an exponential with mean two, which recovers the hard-edge scale discussed in Remark~\ref{rem:lambdamin} by elementary means. The probability identity is special to the uniform law: for $\eta\neq1$ the determinant tilt breaks the affine invariance, while the set homothety and the volume identity are geometric and hold regardless.

The connection between the elliptope and the Marchenko--Pastur phenomenon is more than an analogy. The density factorization in Proposition~\ref{prop:WishartGram}(i) is the restricted-Wishart/LKJ equivalence established by \citet{WangWuChu:2018}, building on the LKJ construction of \citet{LewandowskiKurowickaJoe:2009} and the separation-strategy lineage of \citet{BarnardMcCullochMeng:2000}. The integer-degree Gram representation in part~(ii) is also standard. Our contribution is to exploit this bridge systematically: at $\eta=1$, the determinant exponent vanishes, so the uniform law on $\mathcal{E}_n$ is exactly a sample-correlation ensemble at the critical edge $T/n\to1$. This observation transfers random-matrix and extreme-value structure to the elliptope and is the source of the Marchenko--Pastur law, the Frobenius expansion, the LKJ scaling theorem, and the Poisson point-process result below.

\begin{proposition}[Wishart and Gram representations]\label{prop:WishartGram}
(i) Let $\nu>n-1$ be real, let $W$ have the Wishart distribution $W_n(\nu,I_n)$, and let $C=D^{-1}WD^{-1}$ where $D=\operatorname{diag}(\sigma_1,\ldots,\sigma_n)$ with $\sigma_i^2=W_{ii}$. Then $C$ has density proportional to $\det(C)^{(\nu-n-1)/2}$ on $\mathcal{E}_n$; equivalently, $C\sim\operatorname{LKJ}(\eta)$ with $\nu=n+2\eta-1$, for every $\eta>0$.\newline
(ii) For integer $\nu=k\geq n$, $C$ is distributed as the Gram matrix $C=V^\prime V$, $V=(v_1,\ldots,v_n)$, of $n$ independent vectors $v_i$ that are uniformly distributed on the unit sphere in $\mathbb{R}^k$. In particular, $C$ is uniformly distributed on $\mathcal{E}_n$ if and only if $k=n+1$.
\end{proposition}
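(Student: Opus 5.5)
For part (i), we will compute the joint density of $(D,C)$ from the Wishart density. The $W_n(\nu,I_n)$ density on the PSD cone is proportional to $\det(W)^{(\nu-n-1)/2}\exp\{-\tfrac12\operatorname{tr}W\}$. We change variables $W\mapsto(\sigma_1,\ldots,\sigma_n,C)$, with $W=DCD$. The map is a bijection from $(0,\infty)^n\times\operatorname{int}(\mathcal{E}_n)$ onto the positive definite cone, up to null sets, since $\nu>n-1$ makes $W$ almost surely nonsingular.

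For the Jacobian, the $d$ off-diagonal coordinates satisfy $W_{ij}=\sigma_i\sigma_jC_{ij}$, and the diagonal satisfies $W_{ii}=\sigma_i^2$. The Jacobian matrix is block triangular. Its determinant is $\prod_i 2\sigma_i\cdot\prod_{i<j}\sigma_i\sigma_j=2^n\prod_i\sigma_i^{\,n}$, because each $\sigma_i$ appears in exactly $n-1$ off-diagonal pairs.

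We also use $\det W=\prod_i\sigma_i^2\det C$ and $\operatorname{tr}W=\sum_i\sigma_i^2$. The joint density therefore factors as
$$
\det(C)^{(\nu-n-1)/2}\;\prod_{i=1}^n \sigma_i^{\nu-n-1}\sigma_i^{n}e^{-\sigma_i^2/2}.
$$
It follows that $C$ is independent of $D$ and has density proportional to $\det(C)^{(\nu-n-1)/2}$ on $\mathcal{E}_n$. This is $\operatorname{LKJ}(\eta)$ with $\eta-1=(\nu-n-1)/2$, i.e.\ $\nu=n+2\eta-1$. The condition $\eta>0$ corresponds exactly to $\nu>n-1$. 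The $\sigma$-marginals are integrable, since $\sigma_i^{\nu-1}e^{-\sigma_i^2/2}$ is integrable for $\nu>0$. Real $\nu$ poses no difficulty, because only the density formula is used.

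For part (ii), take integer $k\ge n$ and write $W=X'X$, where $X$ is $k\times n$ with i.i.d.\ $N(0,1)$ entries; this has law $W_n(k,I_n)$. Let $x_i$ denote the columns of $X$. Then $\sigma_i=\|x_i\|$ and $C_{ij}=v_i'v_j$ with $v_i=x_i/\|x_i\|$. The columns are independent standard Gaussian vectors, and by rotation invariance each $v_i$ is uniform on $S^{k-1}$. Hence $C=V'V$ with independent uniform $v_i$, which proves the Gram representation.

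It remains to characterize uniformity. By (i), the density is proportional to $\det(C)^{(k-n-1)/2}$. This is constant on $\mathcal{E}_n$ exactly when $k=n+1$. For any other $k\ge n$ the exponent is nonzero, and $\det C$ is nonconstant on $\mathcal{E}_n$ (it equals $1$ at $I_n$ and tends to $0$ at the boundary), so the law is not uniform.

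One edge case needs care. At $k=n$ we have $\nu=n>n-1$, so (i) still applies, with exponent $-1/2$, and the law is not uniform. The characterization therefore holds over all integers $k\ge n$.

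The main technical step is the Jacobian computation, specifically the count of $\sigma_i$ powers. Everything else is bookkeeping. We will double-check that count against $n=2$, where the Jacobian is $4\sigma_1^2\sigma_2^2$, consistent with $2^n\prod_i\sigma_i^{\,n}$.
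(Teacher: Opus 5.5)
Your proof is correct and follows the same route as the paper. Part (i) is the Wishart-density change of variables; the paper parametrizes by $s_i=W_{ii}$ with Jacobian $\prod_i s_i^{(n-1)/2}$, which is equivalent to your $2^n\prod_i\sigma_i^{\,n}$ under $s_i=\sigma_i^2$. Part (ii), the Gram representation $W=X^\prime X$, and the uniformity characterization via the exponent $(k-n-1)/2$ match the paper's argument.
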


Part~(i) is the restricted-Wishart/LKJ equivalence of \citet{WangWuChu:2018}, written in the notation used here. Part~(ii) is the standard integer-degree Gram representation; \citet{Joe:2006} gives this construction explicitly, and \citet{LewandowskiKurowickaJoe:2009} use the same spherical representation.

The equivalence also gives a direct Bartlett sampler for $\operatorname{LKJ}(\eta)$ correlation matrices: draw $W\sim W_n(n+2\eta-1,I_n)$ by the Bartlett decomposition and normalize to a correlation matrix. This is the restricted-Wishart sampler proposed by \citet{WangWuChu:2018}. In the implementation used here, one may row-normalize the Bartlett factor itself, producing a Cholesky factor of the LKJ draw without first forming the dense Wishart matrix; forming $C$ then requires the additional multiplication $\tilde{L}\tilde{L}^\prime$.

Proposition~\ref{prop:WishartGram}(i) also identifies $\operatorname{Vol}(\mathcal{E}_n)$ as the normalizing constant of the sample-correlation ensemble at $T=n+1$.  The LKJ$(\eta)$ density is proportional to $\det(C)^{\eta-1}$ on $\mathcal{E}_n$, with normalizing constant
$$
Z_n(\eta) = \int_{\mathcal{E}_n}\det(C)^{\eta-1}dC;
$$
setting $\eta=1$ gives $Z_n(1)=\operatorname{Vol}(\mathcal{E}_n)$.  The product formula~(\ref{eq:VolCn}) is the explicit evaluation of $Z_n(1)$ via the Bartlett decomposition underlying Proposition~\ref{prop:WishartGram}(i), and the asymptotic expansion~(\ref{eq:VolCnAsym}) is its large-$n$ consequence.  The volume formula and the sample-correlation identification therefore rest on the same Wishart normalizing constant.

Writing $v_i=z_i/\|z_i\|$ with $z_1,\ldots,z_n$ independent $N(0,I_k)$, the entries $C_{ij}=z_i^\prime z_j/(\|z_i\|\|z_j\|)$ are the (uncentered) sample correlations of $n$ independent Gaussian variables computed from $T=k$ observations. So, Proposition~\ref{prop:WishartGram} shows that a uniform draw from $\mathcal{E}_n$ is distributed exactly as the empirical correlation matrix of $n$ independent Gaussian variables observed $T=n+1$ times, one more observation than the dimension. Only the spherical symmetry of the Gaussian distribution is used here, so the identity extends to any spherically distributed observation vectors. Equivalently, centering $T=n+2$ independent Gaussian observations projects each column onto the $(T-1)=n+1$ dimensional subspace orthogonal to the vector of ones, and after normalization the centered columns are independent and uniformly distributed on the unit sphere in that subspace; a uniform draw is therefore also distributed exactly as the centered (Pearson) sample correlation matrix based on $T=n+2$ Gaussian observations. In either form, the uniform distribution corresponds to a sample correlation matrix with $n/T\rightarrow1$, and the limiting theory of sample correlation matrices applies directly.

\begin{theorem}[Marchenko--Pastur law]\label{thm:MP}
Let $C_n$ be uniformly distributed on $\mathcal{E}_n$, with eigenvalues $\lambda_1,\ldots,\lambda_n$, and let $F_n(x)=\frac{1}{n}\#\{i:\lambda_i\leq x\}$ denote the empirical spectral distribution. Then, as $n\rightarrow\infty$, $F_n$ converges weakly, in probability, to the Marchenko--Pastur law with ratio one, whose density is
$$
f_{\mathrm{MP}}(x)=\frac{1}{2\pi x}\sqrt{x(4-x)},\qquad x\in(0,4].
$$
\end{theorem}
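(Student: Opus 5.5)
By Proposition~\ref{prop:WishartGram}(ii) with $k=n+1$, we may realize $C_n=D^{-1}Z^\prime Z D^{-1}$, where $Z=(z_1,\ldots,z_n)$ is a $(n+1)\times n$ matrix with i.i.d.\ $N(0,1)$ entries and $D=\operatorname{diag}(\|z_1\|,\ldots,\|z_n\|)$. Set $T=n+1$ and $S_n=T^{-1}Z^\prime Z$, the uncentered sample covariance matrix with aspect ratio $n/T\to1$. The classical Marchenko--Pastur theorem gives that the empirical spectral distribution of $S_n$ converges weakly almost surely to the MP law with ratio $y=1$. That law has density $\frac{1}{2\pi x}\sqrt{(b-x)(x-a)}$ with $a=(1-\sqrt y)^2=0$ and $b=(1+\sqrt y)^2=4$, which is exactly $f_{\mathrm{MP}}$. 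The plan is therefore to transfer this result from $S_n$ to $C_n=\tilde D^{-1}S_n\tilde D^{-1}$, where $\tilde D=T^{-1/2}D$ has diagonal entries $\tilde\sigma_i^2=\|z_i\|^2/T$.

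The transfer uses the uniform concentration of the column norms. Each $\|z_i\|^2\sim\chi^2_{T}$, so standard chi-square (Laurent--Massart) tail bounds together with a union bound over the $n$ columns give
$$\max_{i\le n}|\tilde\sigma_i^2-1|=O_p\bigl(\sqrt{\log n/n}\bigr)\to0.$$
It follows that $\|\tilde D^{-1}-I\|_{op}\to0$ in probability. We compare the spectra through the Lévy distance $L$. For Hermitian $A,B$ we have $L(F^A,F^B)^3\le n^{-1}\operatorname{tr}(A-B)^2$; alternatively, Weyl-type multiplicative perturbation gives $\lambda_i(\tilde D^{-1}S\tilde D^{-1})\in[\lambda_i(S)\,\min_j\tilde\sigma_j^{-2},\,\lambda_i(S)\,\max_j\tilde\sigma_j^{-2}]$ by Ostrowski's theorem. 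The Ostrowski route is cleanest. Every eigenvalue of $C_n$ equals the corresponding ordered eigenvalue of $S_n$ multiplied by a factor $\theta_i\in[\min\tilde\sigma^{-2},\max\tilde\sigma^{-2}]$, and this interval shrinks to $\{1\}$ in probability.

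It remains to turn the multiplicative eigenvalue perturbation into weak convergence. Fix a bounded Lipschitz test function $g$. Then
$$\Bigl|\tfrac1n\sum_i g(\lambda_i(C_n))-\tfrac1n\sum_i g(\lambda_i(S_n))\Bigr|\le \|g\|_{\rm Lip}\max_i|\theta_i-1|\cdot\tfrac1n\sum_i\lambda_i(S_n).$$
Here $\tfrac1n\operatorname{tr}S_n=\tfrac1{nT}\sum\|z_i\|^2\to1$ in probability, so the right-hand side tends to zero in probability. Since the bounded Lipschitz functions determine weak convergence, $F_n$ converges weakly in probability to the MP law.

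I expect the main obstacle to be purely bookkeeping: invoking the correct MP theorem at the boundary ratio $y=1$, where the limit has a $x^{-1/2}$ singularity at the hard edge $0$ but is still a probability measure, and confirming that $T=n+1$ rather than exactly $n$ does not matter, since only $n/T\to1$ is needed. If one prefers to cite results stated for centered Pearson correlations, the equivalent representation at $T=n+2$ together with the known MP limit for sample correlation matrices (Jiang 2004; Bai--Zhou 2008) gives the result directly.
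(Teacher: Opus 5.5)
Your proof is correct, but it takes a more self-contained route than the paper's. The paper's proof is essentially a citation. It passes to the centered representation, in which the uniform law is the Pearson correlation matrix at $T=n+2$, and then invokes the Marchenko--Pastur theorem for sample \emph{correlation} matrices (\citet{Jiang:2004}, or \citet{Heiny:2022} via the diagonal comparison theorem). It then notes that the almost-sure statement there yields the in-probability statement for the $n$-dependent laws.

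You instead stay with the uncentered Gram representation at $T=n+1$ and cite only the classical covariance Marchenko--Pastur theorem for $S_n=T^{-1}Z^\prime Z$. You then prove the covariance-to-correlation transfer yourself:
\begin{itemize}
\item chi-square tail bounds with a union bound give $\max_i|\tilde\sigma_i^2-1|=O_p(\sqrt{\log n/n})$;
\item Ostrowski's theorem gives $\lambda_i(C_n)=\theta_i\lambda_i(S_n)$ with $\max_i|\theta_i-1|\to0$ in probability;
\item the bounded-Lipschitz estimate, using $\lambda_i(S_n)\geq0$ and $\tfrac1n\operatorname{tr}S_n\to1$, turns this into weak convergence in probability.
\end{itemize}
This is the same diagonal-normalization argument the paper uses in Step~3 of the proof of Theorem~\ref{thm:LKJscaling}, with Courant--Fischer there in place of Ostrowski, specialized to integer $\nu=n+1$. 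So your argument effectively proves the $\eta=1$ case of that theorem directly.

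The paper's route buys brevity and, through the cited results, generality beyond Gaussian data, which is not needed here. Yours avoids the correlation-specific literature and the centering/projection step, and it gives an explicit multiplicative eigenvalue bound $1\pm O_p(\sqrt{\log n/n})$.

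One phrase is loose: almost-sure convergence of the spectral distribution of $S_n$ is only meaningful under a joint construction such as the coupling of Remark~\ref{rem:coupling}. Since only convergence in probability is claimed, this does not affect your argument.
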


\begin{remark}[Coupling]\label{rem:coupling}
Almost-sure convergence holds when the $C_n$ are coupled on a common probability space as follows. Let $Z=(Z_{ti})_{t\geq1,i\geq1}$ be an infinite array of independent $N(0,1)$ entries. For each $n$, let $z_i^{(n)}=(Z_{1i},\ldots,Z_{n+1,i})^\prime\in\mathbb{R}^{n+1}$ and set $v_i^{(n)}=z_i^{(n)}/\|z_i^{(n)}\|$; then $C_n=(v_i^{(n)\prime}v_j^{(n)})_{i,j=1}^n$ is uniformly distributed on $\mathcal{E}_n$ by Proposition~\ref{prop:WishartGram}(ii). Since $C_n$ is the uncentered sample correlation matrix of the first $n$ variables observed $T=n+1$ times (equivalently, the Pearson sample correlation matrix based on $T=n+2$ observations; see the paragraph following Proposition~\ref{prop:WishartGram}), almost-sure weak convergence of $F_n$ follows from \citet{HeinyMikosch:2018}.
\end{remark}

Figure~\ref{fig:MPlaw} illustrates the convergence by comparing empirical spectral distributions of uniform draws from $\mathcal{E}_n$ with the Marchenko--Pastur density.

The limiting spectrum reproduces the two scales established above. The second moment of the Marchenko--Pastur law about one is $\int(x-1)^2f_{\mathrm{MP}}(x)dx=1$, so $\|C-I_n\|_F^2=\sum_i(\lambda_i-1)^2\approx n$, which is shown in Proposition~\ref{prop:C-I}. Likewise, the approximation (\ref{eq:maxrhoapprox}) for the largest correlation is consistent with the Gumbel fluctuation limit established in Section~\ref{sec:extremes}. Entrywise, a uniform draw is nearly the identity matrix; spectrally, its eigenvalues spread over the entire interval $[0,4]$, with substantial mass near zero. A uniform draw therefore becomes asymptotically ill-conditioned: the typical matrix sits close to the singular boundary of $\mathcal{E}_n$, even though every individual correlation is tiny. Section~\ref{sec:precision} quantifies this ill-conditioning.

\begin{figure}[htbp!]
\centering{}\includegraphics[width=0.8\textwidth]{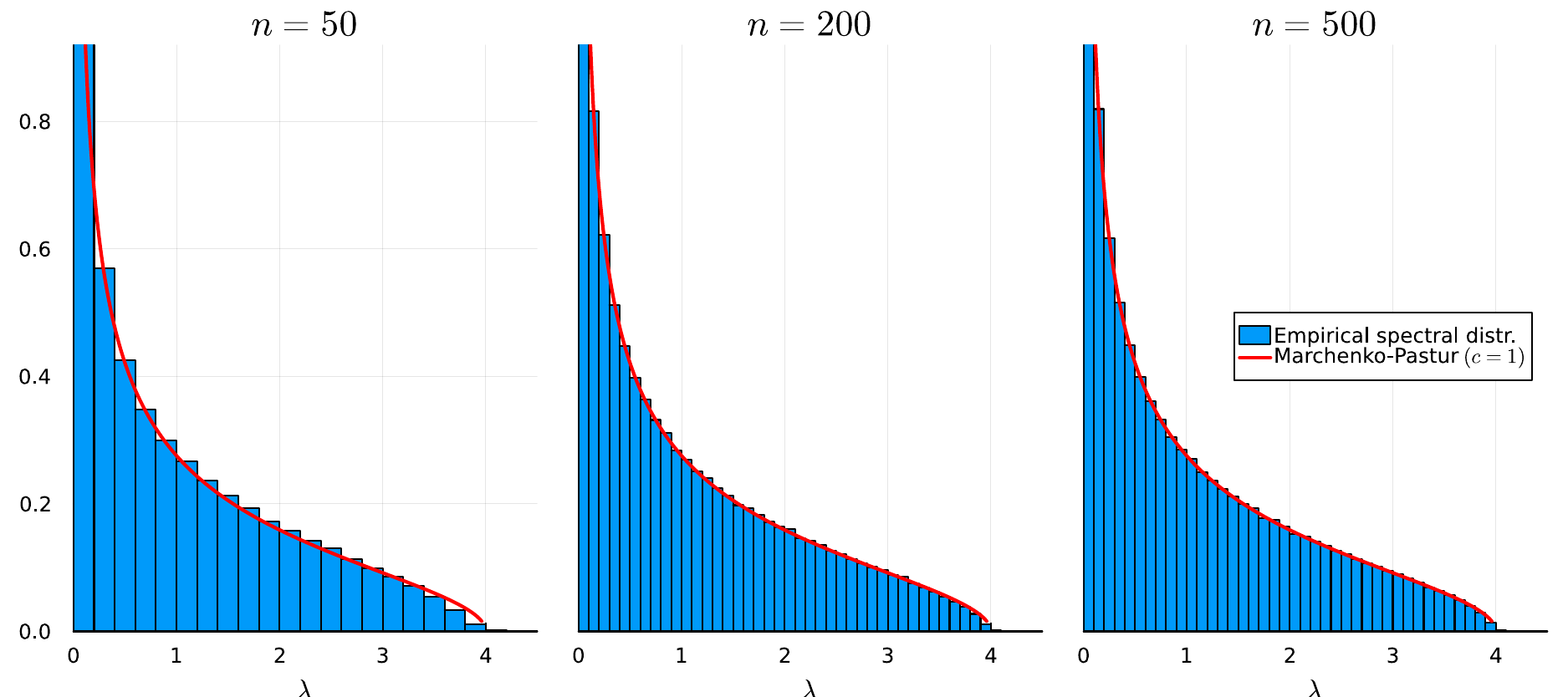}
\caption{{\small{}Empirical spectral distributions of uniformly distributed $n\times n$ correlation matrices for $n\in\{50,200,500\}$, based on pooled eigenvalues from independent draws from $\mathcal{E}_n$ via the LKJ$(1)$ representation. Each histogram pools $1{,}000$ eigenvalues: $20$ draws at $n=50$, $5$ draws at $n=200$, and $2$ draws at $n=500$. The Marchenko--Pastur density with ratio one, $f_{\mathrm{MP}}(x)=(2\pi x)^{-1}\sqrt{x(4-x)}$, is shown in red. Convergence to the limiting law is visually apparent already at moderate $n$.\label{fig:MPlaw}}}
\end{figure}

\subsection{Extreme Correlations and Poisson Limits}\label{sec:extremes}

The preceding results describe the bulk spectrum and the largest entry separately. We now characterize the full collection of extreme off-diagonal entries. The key input is exact pairwise independence, which holds throughout the LKJ family.

\begin{lemma}[Pairwise independence]\label{lem:pairwise}
Let $C\sim\operatorname{LKJ}_n(\eta)$ with $\eta>0$. Any two distinct off-diagonal entries $C_{ij}$ and $C_{kl}$, $\{i,j\}\neq\{k,l\}$, are independent. In particular this holds for the uniform distribution on $\mathcal{E}_n$ ($\eta=1$).
\end{lemma}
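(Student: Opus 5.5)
The plan is to work through the restricted-Wishart representation of Proposition~\ref{prop:WishartGram}(i), combined with the Bartlett decomposition of $W$. Both are available for every real $\nu=n+2\eta-1>n-1$, so no integer-degree Gram representation is needed.

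First, reduce by symmetry. Both the LKJ$(\eta)$ density $\det(C)^{\eta-1}$ and the law $W_n(\nu,I_n)$ are invariant under simultaneous permutation of rows and columns, so I may relabel indices freely. There are two cases. If $\{i,j\}$ and $\{k,l\}$ share a vertex, I relabel so the pair is $(C_{12},C_{13})$. If the pairs are disjoint, I relabel so it is $(C_{12},C_{34})$.

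Second, write $W=LL'$ with $L$ the Bartlett factor. $L$ is lower triangular and all its entries are mutually independent. The below-diagonal entries are $L_{mp}\sim N(0,1)$ and the diagonal entries are $L_{mm}=\sqrt{\chi^2_{\nu-m+1}}$. These degrees of freedom are positive precisely because $\nu>n-1$, so the decomposition is valid for noninteger $\nu$; this is the same fact behind the Bartlett sampler mentioned after Proposition~\ref{prop:WishartGram}. Let $\ell_m$ denote the $m$th row of $L$. Then $W_{mp}=\ell_m'\ell_p$ and
$$C_{mp}=\frac{\ell_m'\ell_p}{\|\ell_m\|\,\|\ell_p\|},$$
so $C_{mp}$ is a measurable function of the pair of rows $(\ell_m,\ell_p)$ alone. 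The rows $\ell_1,\dots,\ell_n$ are independent, since they involve disjoint sets of independent entries.

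Third, treat the two cases. In the disjoint case, $C_{12}$ is a function of $(\ell_1,\ell_2)$ and $C_{34}$ is a function of $(\ell_3,\ell_4)$. These are disjoint collections of independent rows, so the two correlations are independent.

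The shared-vertex case is the heart of the lemma. Since $\ell_1=(L_{11},0,\dots,0)$ with $L_{11}>0$, the inner products simplify:
$$C_{12}=\frac{L_{21}}{\|\ell_2\|},\qquad C_{13}=\frac{L_{31}}{\|\ell_3\|}.$$
The common row $\ell_1$ cancels entirely. Hence $C_{12}$ depends only on $\ell_2$ and $C_{13}$ only on $\ell_3$, and these rows are independent.

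I expect the only real obstacle to be justifying the Bartlett decomposition for real, noninteger $\nu$ together with the independence of all entries of $L$. I would take this from the standard change of variables $W\mapsto L$, whose Jacobian $2^n\prod_m L_{mm}^{\,n-m+1}$ makes the Wishart density factor into a product of independent normal and chi densities. The rest of the argument is the cancellation of the shared row in the second case.
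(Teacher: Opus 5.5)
Your proposal is correct and follows essentially the same route as the paper: you reduce to the canonical configurations $(C_{12},C_{13})$ and $(C_{12},C_{34})$ using permutation invariance of the LKJ/Wishart law, then use the independent rows of the Bartlett factor (valid for real $\nu>n-1$), where the shared row $\ell_1=(L_{11},0,\dots,0)$ cancels to give $C_{1j}=L_{j1}/\|\ell_j\|$. The paper stresses one point that your ordering already respects: the relabeling is done at the level of the law, by applying the Bartlett decomposition afresh to $PWP'$, and not by permuting the rows of a fixed triangular factor.
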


\begin{lemma}[Beta-tail intensity]\label{lem:betatail}
Let $\mu_n=\log\tfrac{n^4}{\log n}$, and let $\Lambda$ be the measure on $\mathbb{R}$ with density
$$
\lambda(t)=\tfrac{1}{2\sqrt{8\pi}}e^{-t/2},\qquad\text{so that }\Lambda(t,\infty)=\tfrac{1}{\sqrt{8\pi}}e^{-t/2}.
$$
Under the uniform law on $\mathcal{E}_n$,
$$
d\cdot\Pr\{(n+1)C_{12}^2-\mu_n>t\}\rightarrow\Lambda(t,\infty),\qquad n\to\infty,
$$
for every $t\in\mathbb{R}$. More generally, $d\cdot\Pr\{(n+1)C_{12}^2-\mu_n\in B\}\to\Lambda(B)$ for every Borel set $B\subseteq\mathbb{R}$ that is bounded away from $-\infty$, with $\Lambda(B)<\infty$ and $\Lambda(\partial B)=0$.
\end{lemma}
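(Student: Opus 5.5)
The plan is to compute the tail of $C_{12}^2$ exactly from its Beta law and then do a Laplace-type asymptotic expansion at the moving threshold. Under the uniform law, $(C_{12}+1)/2\sim\operatorname{Beta}(\tfrac n2,\tfrac n2)$ (as recalled after Proposition~\ref{prop:entrywise}). Equivalently, $C_{12}$ has density $c_n(1-x^2)^{n/2-1}$ on $[-1,1]$, with
$$c_n=\frac{1}{B(\tfrac12,\tfrac n2)}=\frac{\Gamma(\tfrac{n+1}{2})}{\sqrt{\pi}\,\Gamma(\tfrac n2)}.$$
Hence $U=C_{12}^2\sim\operatorname{Beta}(\tfrac12,\tfrac n2)$, and
$$\Pr(C_{12}^2>u)=\frac{1}{B(\tfrac12,\tfrac n2)}\int_u^1 s^{-1/2}(1-s)^{n/2-1}\,ds .$$
Fix $t$ and set $u_n=(\mu_n+t)/(n+1)$. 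Then $u_n\sim 4\log n/n\to0$, and $nu_n^2\to0$.

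The key step is the tail asymptotic. I will show that, uniformly for $u\to0$ with $nu\to\infty$,
$$\int_u^1 s^{-1/2}(1-s)^{n/2-1}ds=\frac{2}{n}\,u^{-1/2}(1-u)^{n/2}\bigl(1+O(\tfrac{1}{nu})+O(u)\bigr).$$
To prove it, substitute $s=u+v$. Use $(1-u-v)^{n/2-1}=(1-u)^{n/2-1}(1-\tfrac{v}{1-u})^{n/2-1}$ and $(u+v)^{-1/2}=u^{-1/2}(1+v/u)^{-1/2}$. The mass then concentrates on $v=O(1/n)\ll u$, so dominated convergence (or a single integration by parts) gives the claim.

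Next combine this with $B(\tfrac12,\tfrac n2)^{-1}=\sqrt{n/(2\pi)}\,(1+O(1/n))$ (Stirling) and with
$$(1-u_n)^{n/2}=\exp\{-\tfrac n2u_n+O(nu_n^2)\}=e^{-(\mu_n+t)/2}(1+o(1)),$$
where I used $n/(n+1)\cdot(\mu_n+t)=\mu_n+t+O(\log n/n)$. This yields
$$d\,\Pr\{(n+1)C_{12}^2-\mu_n>t\}\approx \frac{n^2}{2}\sqrt{\frac{n}{2\pi}}\,\frac{2}{n}\Bigl(\frac{4\log n}{n}\Bigr)^{-1/2} e^{-\mu_n/2}e^{-t/2}.$$
Substituting $e^{-\mu_n/2}=\sqrt{\log n}/n^2$ gives
$$\frac{n^2}{2}\cdot\sqrt{\frac{n}{2\pi}}\cdot\frac{2}{n}\cdot\frac{\sqrt n}{2\sqrt{\log n}}\cdot\frac{\sqrt{\log n}}{n^2}\,e^{-t/2}=\frac{1}{2\sqrt{2\pi}}e^{-t/2}=\frac{1}{\sqrt{8\pi}}e^{-t/2}.$$
This matches $\Lambda(t,\infty)$. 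The bookkeeping $u_n^{-1/2}=(4\log n/n)^{-1/2}(1+O(\log\log n/\log n))$ is what forces the exact centering $\mu_n=\log(n^4/\log n)$. I expect this to be the main delicate point: every relative error must be $o(1)$, including the replacement of $\mu_n+t$ by $4\log n$ inside the square-root prefactor, and the error there is only logarithmically small.

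For the general Borel statement, the pointwise convergence above holds for every $t$ and the limit is continuous. Therefore the measures $\Lambda_n(B)=d\Pr\{(n+1)C_{12}^2-\mu_n\in B\}$ converge to $\Lambda$ on every half-line $(a,\infty)$, and hence on every interval $(a,b]$. Restricting to $[a,\infty)$ for fixed $a$, the total masses converge. After normalization this is weak convergence of finite measures on $[a,\infty)$, so the portmanteau theorem gives $\Lambda_n(B)\to\Lambda(B)$ for every Borel $B\subseteq[a,\infty)$ with $\Lambda(\partial B)=0$. Any $B$ bounded away from $-\infty$ lies in some such $[a,\infty)$, so this completes the extension.
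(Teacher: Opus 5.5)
Your proposal is correct, and it differs from the paper in two places.

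\textbf{The tail estimate.} This is the same endpoint (Laplace-type) analysis as the paper's Steps~1--3, but you carry it out in the squared variable $s=C_{12}^2\sim\operatorname{Beta}(\tfrac12,\tfrac n2)$ rather than in $x=|C_{12}|$. That is a modest simplification. After $s=u+v$, the factor $(1-s)^{n/2-1}$ is an exact power of a linear function, so there is no quadratic remainder $h^2$ to control and no need for the paper's concavity/tangent-line domination. The only other factor, $(1+v/u)^{-1/2}$, is bounded by one and tends to one because $nu_n\to\infty$. In fact, the convexity bound $(1+z)^{-1/2}\geq 1-z/2$ together with $\int_0^1 y(1-y)^{n/2-1}dy=B(2,\tfrac n2)$ gives your relative error $O(1/(nu))$ explicitly, without dominated convergence.

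\textbf{The extension to Borel sets.} This is where your route genuinely differs. The paper computes the Lebesgue density $h_n$ of the pre-limit mean measure. It then shows $h_n\to\lambda$ pointwise and $h_n(t)\leq Ke^{-t/2}$ on $[t_0,\infty)$, and applies dominated convergence. You instead deduce weak convergence of the normalized restrictions to $[a,\infty)$ from convergence of the tail function at every $t$ and continuity of the limit, and then invoke the portmanteau theorem. (When you say the total masses of $[a,\infty)$ converge, note that this uses $\Lambda_n(\{a\})=0$, which holds because $C_{12}^2$ has a density.)

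\textbf{What each route buys.} Yours is softer and needs no uniform density bounds. The paper's gives more: convergence in total variation on each $[t_0,\infty)$, hence $\Lambda_n(B)\to\Lambda(B)$ for every Borel $B$ bounded away from $-\infty$, without needing $\Lambda(\partial B)=0$. Since the lemma assumes that boundary condition, both arguments suffice.
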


Thresholding $C$ at level $\tau\in(0,1)$ produces a correlation network, with edge count
$$
N_n(\tau)=\#\{i<j:|C_{ij}|>\tau\},
$$
and we write $p_n(\tau)=\Pr(|C_{12}|>\tau)$ for the common exceedance probability of a single entry.
The exceedance point process associated with $C$ is the random counting measure defined, for a Borel set $B\subseteq\mathbb{R}$, by
$$
\Xi_n(B)=\#\{i<j:(n+1)C_{ij}^2-\mu_n\in B\};
$$
in particular, $\Xi_n(t,\infty)$ counts how many of the $d$ normalized squared correlations exceed $t$. Lemma~\ref{lem:betatail} gives $\mathbb{E}[\Xi_n(t,\infty)]=d\cdot\Pr\{(n+1)C_{12}^2-\mu_n>t\}\to\Lambda(t,\infty)$, so the expected count converges to the intensity of a Poisson process with measure $\Lambda(dt)=\frac{1}{2\sqrt{8\pi}}e^{-t/2}dt$. Theorem~\ref{thm:PPP} below shows that $\Xi_n$ converges to that Poisson process in distribution; formally, convergence is in the vague topology on the space of Radon measures on $(-\infty,\infty]$.

\begin{theorem}[Poisson point process]\label{thm:PPP}
Let $C$ be uniformly distributed on $\mathcal{E}_n$. The exceedance process $\Xi_n$ converges to a Poisson point process with intensity $\Lambda(dt)=\frac{1}{2\sqrt{8\pi}}e^{-t/2}dt$. Equivalently, for any finite collection of disjoint Borel sets $B_1,\ldots,B_m$ bounded away from $-\infty$, with $\Lambda(B_r)<\infty$ and $\Lambda(\partial B_r)=0$,
$$
\bigl(\Xi_n(B_1),\ldots,\Xi_n(B_m)\bigr)\overset{d}{\rightarrow}(Z_1,\ldots,Z_m),
$$
where $Z_1,\ldots,Z_m$ are independent with $Z_r\sim\operatorname{Poisson}(\Lambda(B_r))$.
\end{theorem}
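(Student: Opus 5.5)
We plan to prove Theorem~\ref{thm:PPP} by the Chen--Stein method for Poisson process approximation (Arratia--Goldstein--Gordon), applied to the indicator field indexed by the $d$ edges $\alpha=\{i,j\}$, $i<j$. First we reduce to a single Borel set. Because the $B_r$ are disjoint, we can encode the vector $(\Xi_n(B_1),\ldots,\Xi_n(B_m))$ through the marked indicators $X_{\alpha,r}=1\{(n+1)C_\alpha^2-\mu_n\in B_r\}$. The multivariate Chen--Stein bound then controls the total-variation distance between the law of this vector and a product of Poisson laws with means $\lambda_{n,r}=d\Pr\{(n+1)C_{12}^2-\mu_n\in B_r\}$. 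By Lemma~\ref{lem:betatail}, $\lambda_{n,r}\to\Lambda(B_r)$, and the Poisson laws with these means converge in total variation to $\operatorname{Poisson}(\Lambda(B_r))$. This gives the displayed finite-dimensional convergence. Vague convergence of $\Xi_n$ to the Poisson process on $(-\infty,\infty]$ then follows from Kallenberg's theorem. For that we need (a) convergence of means $\mathbb{E}\Xi_n(B)\to\Lambda(B)$ and (b) convergence of avoidance probabilities $\Pr(\Xi_n(B)=0)\to e^{-\Lambda(B)}$ for finite unions of intervals bounded away from $-\infty$. Both are special cases of the finite-dimensional statement together with Lemma~\ref{lem:betatail}.

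For the Chen--Stein bound we take the dependency neighborhood $N_\alpha=\{\beta:\beta\cap\alpha\neq\emptyset\}$, the edges sharing a vertex with $\alpha$, so $|N_\alpha|=2n-3$. The bound is $b_1+b_2+b_3$, with the following terms.
\begin{itemize}
\item $b_1=\sum_\alpha\sum_{\beta\in N_\alpha}p_\alpha p_\beta\le d(2n)p^2$, where $p=\max_r p_{n,r}=O(1/d)$. Hence $b_1=O(n/d)=O(1/n)$.
\item $b_2=\sum_\alpha\sum_{\beta\in N_\alpha\setminus\{\alpha\}}\mathbb{E}[X_\alpha X_\beta]$. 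By Lemma~\ref{lem:pairwise}, $\mathbb{E}[X_\alpha X_\beta]=p_\alpha p_\beta$ exactly, so $b_2$ is also $O(1/n)$. This is where exact pairwise independence replaces an asymptotic decorrelation argument. For $\beta$ disjoint from $\alpha$, pairwise independence is not needed for $b_2$.
\item $b_3=\sum_\alpha\mathbb{E}\bigl|\mathbb{E}[X_\alpha-p_\alpha\mid \sigma(X_\beta:\beta\notin N_\alpha)]\bigr|$.
\end{itemize}

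The main obstacle is $b_3$, since pairwise independence does not give independence of $C_{ij}$ from the whole family $\{C_{kl}:\{k,l\}\cap\{i,j\}=\emptyset\}$. We would use the Gram representation in Proposition~\ref{prop:WishartGram}(ii), $C=V'V$ with independent uniform $v_1,\ldots,v_n\in S^{n}$. Condition on $\mathcal{G}=\sigma(v_k:k\notin\{i,j\})$. Given $\mathcal{G}$, the vectors $v_i,v_j$ are still independent and uniform, so $C_{ij}=v_i'v_j$ is independent of $\mathcal{G}$. The family $(C_{kl})_{k,l\notin\{i,j\}}$ is $\mathcal{G}$-measurable. Therefore $X_\alpha$ is independent of $\sigma(X_\beta:\beta\notin N_\alpha)\subseteq\mathcal{G}$, and $b_3=0$. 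We expect this step to need only this rotation-free observation, which is immediate at $\eta=1$ because $k=n+1$ is an integer. The remaining care is checking that the Chen--Stein constants stay bounded, given the Poisson means $\lambda_{n,r}\to\Lambda(B_r)<\infty$. The total bound is then $b_1+b_2=O(n\cdot d\cdot d^{-2})=O(n^{-1})$, which yields the stated $O(n^{-1})$ total-variation rate relative to Poisson laws with the exact means $\lambda_{n,r}$.
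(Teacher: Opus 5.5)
Your proposal is correct and is essentially the paper's proof: Chen--Stein with the shared-vertex dependency neighborhoods of size $2n-3$, $b_3=0$ from the Gram representation, $b_2\le b_1=O(n^{-1})$ from exact pairwise independence (Lemma~\ref{lem:pairwise}), and the marked-indicator process version for the joint counts. The one step you leave implicit is that in the marked version the same-edge, different-set cross terms in $b_2$ vanish because the $B_r$ are disjoint; and the one real difference is the passage to vague convergence, where you use Kallenberg's theorem (means and avoidance probabilities) while the paper combines the finite-dimensional limits with local tightness from $\mathbb{E}[\Xi_n(t_0,\infty)]\to\Lambda(t_0,\infty)$, both standard and valid here.
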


This theorem is not new as a statement: via Proposition~\ref{prop:WishartGram}(ii), the uniform law on $\mathcal{E}_n$ is the sample correlation matrix of $n$ independent Gaussian series with $T=n+1$ observations, and point-process convergence of the off-diagonal entries in that setting is covered by the general theory of \citet[theorem~3.7]{HeinyMikoschYslas:2021}. The contribution here is the proof: Lemma~\ref{lem:pairwise} gives pairwise independence of off-diagonal entries exactly (not just asymptotically) and for every LKJ parameter, so the joint-expectation terms in the Chen--Stein bound reduce to products of marginal probabilities, and both error terms in the bound of \citet{ArratiaGoldsteinGordon:1989} are of order $n^{-1}$. The argument is elementary and self-contained, and it delivers an explicit $O(n^{-1})$ total-variation bound for the finite-dimensional exceedance counts relative to Poisson laws with the exact means $dp_n(B_r)$; the passage to the limiting process uses only the convergence of these means and carries no rate.

Two consequences follow directly by projecting the point process onto specific functionals.

\begin{corollary}[Edge counts and triangles]\label{cor:ppp_consequences}
Under the uniform law on $\mathcal{E}_n$: (a) if the thresholds $\tau_n\in(0,1)$ satisfy $dp_n(\tau_n)\rightarrow\lambda\in(0,\infty)$, which holds for $\tau_n^2=(\mu_n-\log(8\pi\lambda^2)+o(1))/(n+1)$, then $N_n(\tau_n)\overset{d}{\rightarrow}\operatorname{Poisson}(\lambda)$; and (b) at these thresholds, the probability that the thresholded graph contains a triangle tends to zero.
\end{corollary}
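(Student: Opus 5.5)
Part (a) is a one-set projection of Theorem~\ref{thm:PPP}. Since $|C_{ij}|>\tau_n$ iff $(n+1)C_{ij}^2-\mu_n>t_n$ with $t_n=(n+1)\tau_n^2-\mu_n$, we have $N_n(\tau_n)=\Xi_n(t_n,\infty)$. The threshold is moving, so I would not use the fixed-set weak convergence directly. Instead I would rerun the Chen--Stein estimate behind Theorem~\ref{thm:PPP} with $m=1$ and $B=(t_n,\infty)$.

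Take the dependency neighbourhood of edge $\{i,j\}$ to be itself only. By Lemma~\ref{lem:pairwise}, the $b_2$ term of \citet{ArratiaGoldsteinGordon:1989} vanishes identically, and $b_1=d\,p_n(\tau_n)^2=O(1/d)$. The remaining $b_3$ term involves dependence on the non-neighbouring edges jointly, not just pairwise. Following what the paper says the proof of Theorem~\ref{thm:PPP} does, I would bound it by the same $O(n^{-1})$ estimate; I would check that this estimate depends only on $p_n$ and not on the set $B$.

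This gives $d_{TV}\bigl(N_n(\tau_n),\operatorname{Poisson}(dp_n(\tau_n))\bigr)\to0$. Since $dp_n(\tau_n)\to\lambda$, the Poisson laws converge and $N_n(\tau_n)\overset{d}{\to}\operatorname{Poisson}(\lambda)$.

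For the stated threshold formula, I would use Lemma~\ref{lem:betatail} uniformly on compact $t$-sets. The tail $d\Pr\{(n+1)C_{12}^2-\mu_n>t\}$ is monotone in $t$ and its limit $\Lambda(t,\infty)$ is continuous, so Pólya's argument upgrades pointwise convergence to locally uniform convergence. With $t_n\to t^*=-\log(8\pi\lambda^2)$ we get $dp_n(\tau_n)\to\Lambda(t^*,\infty)=(8\pi)^{-1/2}e^{-t^*/2}=\lambda$.

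For (b), I would use a first-moment bound. The expected number of triangles is $\binom n3\Pr(|C_{12}|>\tau_n,|C_{13}|>\tau_n,|C_{23}|>\tau_n)$. Pairwise independence alone does not factor a triple, so I would drop one edge and bound the probability by $\Pr(|C_{12}|>\tau_n,|C_{13}|>\tau_n)=p_n(\tau_n)^2$ using Lemma~\ref{lem:pairwise}. The expected count is then at most $\binom n3 p_n^2\asymp n^3(\lambda/d)^2\asymp \lambda^2/n\to0$, and Markov's inequality finishes.

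The only delicate point is the Chen--Stein $b_3$ term along a moving threshold. I expect it to carry over verbatim from the proof of Theorem~\ref{thm:PPP}, because that bound is stated in terms of $dp_n$ and holds at each finite $n$.
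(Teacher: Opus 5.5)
The gap is the $b_3$ term in (a). With $B_\alpha=\{\alpha\}$, $b_2$ is an empty sum, so Lemma~\ref{lem:pairwise} is never actually used. All of the dependence is pushed into $b_3=\sum_\alpha\mathbb{E}\bigl|\mathbb{E}[I_\alpha-p_\alpha\mid\sigma(I_\beta:\beta\neq\alpha)]\bigr|$. This term conditions jointly on every other indicator, including the $2n-4$ edges that share a vertex with $\alpha=\{i,j\}$.

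Pairwise independence gives no handle on this. $C_{ij}$ is independent of $C_{ik}$ and of $C_{jk}$ separately, but not of the pair: the triple is supported on $\mathcal{E}_3$, not on a cube. In the Gram representation, $C_{ij}=C_{ik}C_{jk}+\sqrt{(1-C_{ik}^2)(1-C_{jk}^2)}\,u_i^\prime u_j$, with $u_i,u_j$ the normalized components of $v_i,v_j$ orthogonal to $v_k$. So the conditional expectation in $b_3$ need not equal $p_\alpha$.

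Nor is there an $O(n^{-1})$ bound on such a $b_3$ in the proof of Theorem~\ref{thm:PPP} that you could carry over. There the neighbourhood of $\alpha$ is the set of all $2n-3$ edges sharing a vertex with $\alpha$, and $b_3=0$ \emph{exactly}. The reason is that $I_\alpha$ is a function of $(v_i,v_j)$, every non-neighbour indicator is a function of $\{v_k:k\notin\{i,j\}\}$, and these are independent by Proposition~\ref{prop:WishartGram}(ii). The $O(n^{-1})$ estimate there concerns $b_1+b_2$, and pairwise independence enters only to evaluate $b_2$. Showing that your $b_3$ is small would need a separate analysis of the joint law of the whole star of edges at $i$ and $j$, which you do not supply.

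The repair is to keep your moving threshold but use that neighbourhood. Then $b_3=0$, $b_1=d(2n-3)p_n(\tau_n)^2$, and $b_2=d(2n-4)p_n(\tau_n)^2$ by Lemma~\ref{lem:pairwise}. Hence $b_1+b_2\leq2(2n-3)\bigl(dp_n(\tau_n)\bigr)^2/d=O(n^{-1})$ whenever $dp_n(\tau_n)\to\lambda$, since nothing depends on the set beyond $p_n$. This gives $d_{\mathrm{TV}}\bigl(N_n(\tau_n),\operatorname{Poisson}(dp_n(\tau_n))\bigr)=O(n^{-1})$ for arbitrary thresholds with $dp_n(\tau_n)\to\lambda$.

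That is a slightly more direct route than the paper's. The paper first shows $t_n\to t_\lambda=-\log(8\pi\lambda^2)$ by monotonicity, then sandwiches $N_n(\tau_n)$ between $\Xi_n(t_\lambda+\varepsilon,\infty)$ and $\Xi_n(t_\lambda-\varepsilon,\infty)$. It applies Theorem~\ref{thm:PPP} on these fixed sets and lets $\varepsilon\downarrow0$, which gives the limit but no rate. Your P\'olya argument for the explicit threshold formula is fine, and your first-moment proof of (b) is exactly the paper's.
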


\begin{corollary}[Gumbel limit]\label{cor:gumbel}
Let $C$ be uniformly distributed on $\mathcal{E}_n$, and let $M_n=\max_{1\leq i<j\leq n}|C_{ij}|$. Then, as $n\rightarrow\infty$,
$$
(n+1)M_n^2-\mu_n\overset{d}{\rightarrow}\mathcal{G},\qquad \mathcal{G}(t)=\exp\{-\tfrac{1}{\sqrt{8\pi}}e^{-t/2}\},
$$
which is a Gumbel distribution. In particular, $\sqrt{(n+1)/\log n}M_n\overset{p}{\rightarrow}2$.
\end{corollary}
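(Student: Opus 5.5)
The plan is to read Corollary~\ref{cor:gumbel} off Theorem~\ref{thm:PPP} by looking at a single set $B=(t,\infty)$. Fix $t\in\mathbb{R}$. The function $x\mapsto (n+1)x^2-\mu_n$ is increasing on $[0,1]$, and $M_n^2=\max_{i<j}C_{ij}^2$. Hence
\[
\{(n+1)M_n^2-\mu_n\le t\}=\{\Xi_n(t,\infty)=0\}.
\]
The set $(t,\infty)$ is bounded away from $-\infty$. It has finite measure $\Lambda(t,\infty)=\tfrac{1}{\sqrt{8\pi}}e^{-t/2}$, and its boundary $\{t\}$ is $\Lambda$-null because $\Lambda$ has a density. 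Theorem~\ref{thm:PPP} with $m=1$ therefore gives $\Xi_n(t,\infty)\overset{d}{\to}\operatorname{Poisson}(\Lambda(t,\infty))$. Since the counts are integer-valued, weak convergence implies convergence of the point mass at zero, so
\[
\Pr\{(n+1)M_n^2-\mu_n\le t\}\to \exp\{-\tfrac{1}{\sqrt{8\pi}}e^{-t/2}\}=\mathcal{G}(t).
\]
This holds for every $t$. The limit $\mathcal{G}$ is continuous and tends to $0$ and $1$ at $\mp\infty$, so it is a proper distribution function and the convergence in distribution follows. The change of variable $s=t/2$ shows $\mathcal{G}$ is a Gumbel law with scale $2$ and a shifted location.

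For the consistency claim, I will use that $(n+1)M_n^2-\mu_n=O_p(1)$, which follows from convergence in distribution to a proper law. Dividing by $\log n$ gives $(n+1)M_n^2/\log n-\mu_n/\log n\overset{p}{\to}0$. Next,
\[
\mu_n=\log(n^4/\log n)=4\log n-\log\log n,
\]
so $\mu_n/\log n\to4$. Hence $(n+1)M_n^2/\log n\overset{p}{\to}4$. Applying the continuous map $x\mapsto\sqrt{x}$ on $[0,\infty)$ gives $\sqrt{(n+1)/\log n}\,M_n\overset{p}{\to}2$.

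There is essentially no obstacle here; the real work is in Theorem~\ref{thm:PPP}, which is taken as given. The points that need some care are:
\begin{itemize}
\item the event identity, which requires only that the transformation is monotone in $|C_{ij}|$;
\item checking the regularity conditions on $B=(t,\infty)$;
\item the step from convergence in law of $\Xi_n(t,\infty)$ to convergence of $\Pr\{\Xi_n(t,\infty)=0\}$, which is immediate for integer-valued variables since $\{0\}=(-\tfrac12,\tfrac12)$ is a continuity set of the Poisson limit.
\end{itemize}
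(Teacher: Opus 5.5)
Your proposal is correct and follows the paper's own proof: the identity $\{(n+1)M_n^2-\mu_n\le t\}=\{\Xi_n(t,\infty)=0\}$, then Theorem~\ref{thm:PPP} applied to $B=(t,\infty)$, then $\mu_n\sim4\log n$ for the consistency claim. You add explicit checks that the paper leaves implicit: that $(t,\infty)$ satisfies the theorem's regularity conditions, that the Poisson limit gives convergence of $\Pr\{\Xi_n(t,\infty)=0\}$, that $\mathcal{G}$ is a proper distribution function, and the $O_p(1)$ plus continuous-mapping step. All of these are correct.
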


The rescaled maximum $(n+1)M_n^2-\mu_n$ is the largest atom of $\Xi_n$, so the Gumbel limit follows from Theorem~\ref{thm:PPP} and $\Pr\{\Xi_n(t,\infty)=0\}\rightarrow\exp\{-\Lambda(t,\infty)\}$. Via the sample-correlation identification, Corollary~\ref{cor:gumbel} is the specialization to $T=n+2$ of the classical largest-entry result of \citet{Jiang:2004maxentry}; it is the rigorous counterpart to the typical-value approximation (\ref{eq:maxrhoapprox}), whose square agrees with the implied typical value $M_n^2\approx\log\bigl(\tfrac{n^4}{2\pi\log n^4}\bigr)/(n+1)$ to second order.

Under the uniform law, the edge indicators of the thresholded network have common probability $p_n(\tau)$ and are pairwise independent by Lemma~\ref{lem:pairwise}. Consequently, every edge count has exactly the same mean and variance as under the Erd\H{o}s--R\'enyi model $G(n,p_n)$, a canonical benchmark in which edges are independent, although higher-order dependence remains because all edges arise from a single positive semidefinite matrix. At extreme thresholds with $dp_n\rightarrow\lambda$, this dependence is negligible to first order: edge counts converge to a Poisson distribution and triangles disappear with probability tending to one (Corollary~\ref{cor:ppp_consequences}). The uniform elliptope law therefore provides a structurally coherent finite-dimensional null whose sparse extreme limit agrees with the Erd\H{o}s--R\'enyi benchmark. The setting is close in spirit to the correlation-screening framework of \citet{HeroRajaratnam:2011}, extended to compound-Poisson characterizations in ultra-high dimension by \citet{WeiRajaratnamHero:2023}. Figure~\ref{fig:PPP} in the Supplement illustrates the convergence of $\Xi_n$ to the limit.

\section{Implications for High-Dimensional Modeling}\label{sec:implications}

Whether considering the $n$ assets in a minimum-variance portfolio or the $n$ covariates in a regression model, the matrix $C\in\mathcal{E}_n$ governs the dependency structure of the inputs. This section develops the consequences of the results above for sampling noise, prior specification, precision matrices, and entrywise specification.

\subsection{Sampling Noise in Empirical Correlation Matrices}\label{sec:MP}

The framework of Proposition~\ref{prop:WishartGram} and Theorem~\ref{thm:LKJscaling} covers sample correlation matrices at general aspect ratios; Theorem~\ref{thm:MP} is the critical case in which the sample correlation matrix is exactly uniform on $\mathcal{E}_n$. Three statements should be kept separate.

First, let $\hat{C}$ be the sample correlation matrix computed from $T$ independent observations of $n$ independent standardized Gaussian variables, the setting in which the Wishart identity is exact. Then $\operatorname{var}(\hat{C}_{ij})=1/(T-1)$ exactly for the Pearson correlation matrix, and hence
$$
\mathbb{E}\|\hat{C}-I_n\|_F^2=\frac{n(n-1)}{T-1},
$$
so when $T\asymp n$ the accumulated entrywise noise produces a Frobenius displacement of order $\sqrt{n}$: the same global scale as a uniform draw from the elliptope (Proposition~\ref{prop:C-I}). (For non-Gaussian data the $1/T$ variance scale and the spectral limit below continue to hold under standard moment conditions, e.g.\ independent standardized entries with finite fourth moments \citep{Jiang:2004,BaiSilverstein:2010}, but the exact distributional identities used in this paper are Gaussian, or more generally spherical, facts.)

Second, the spectral consequence is not read off from that scale but from the Wishart/LKJ identity: for $n/T\rightarrow c\in(0,1]$ the limiting spectral distribution is the Marchenko--Pastur law with ratio $c$, whose left edge reaches zero exactly at $c=1$, so the eigenvalues spread out and accumulate near the singular boundary.

Third, at the critical sample size $T=n+1$ uncentered, equivalently $T=n+2$ centered, the two coincide in the strongest possible sense: the law of $\hat{C}$ is then \emph{exactly} the uniform distribution on $\mathcal{E}_n$ (Proposition~\ref{prop:WishartGram}), and the uniform measure is the extreme case $c=1$. Away from that critical value the sample-correlation law is determinant-tilted, $\pi(C)\propto\det(C)^{\eta-1}$ with $\eta=(T-n+1)/2$ in the uncentered parameterization, so matching the Frobenius scale of the elliptope does not by itself mean that $\hat{C}$ is spread uniformly over it.

\subsection{Concentration of Correlation Priors in High Dimensions}\label{sec:LKJ}

By Proposition~\ref{prop:WishartGram}(i), the LKJ$(\eta)$ prior of \citet{LewandowskiKurowickaJoe:2009}, with density $\pi_\eta(C)\propto\det(C)^{\eta-1}$ on $\mathcal{E}_n$, is the correlation matrix of a Wishart with $\nu=n+2\eta-1$ degrees of freedom. At $\eta=1$ it reduces to the uniform distribution studied in Section~\ref{sec:concentration}, for which Proposition~\ref{prop:VolumeCn} supplies the exact normalizing constant, a quantity that is difficult to compute numerically in high dimensions. The Wishart identification allows the limit theory of sample correlation matrices to extend to the full LKJ family with aspect ratio $c_n=n/\nu_n$. The following result characterizes the prior when the hyperparameter is allowed to grow with the dimension.

\FloatBarrier
\begin{theorem}[LKJ scaling]\label{thm:LKJscaling}
Let $C_n\sim\operatorname{LKJ}_n(\eta_n)$ with $\eta_n>0$, and let $\nu_n=n+2\eta_n-1$.\newline
(i) If $\eta_n/n\rightarrow\theta\in[0,\infty)$, the empirical spectral distribution of $C_n$ converges weakly, in probability, to the Marchenko--Pastur law with ratio $c_\theta=1/(1+2\theta)$.\newline
(ii) $\mathbb{E}\|C_n-I_n\|_F^2=n(n-1)/\nu_n$, and $\|C_n-I_n\|_F^2/\mathbb{E}\|C_n-I_n\|_F^2\rightarrow1$ in probability. In particular, $\tfrac{1}{n}\|C_n-I_n\|_F^2\rightarrow1/(1+2\theta)$ in probability when $\eta_n/n\rightarrow\theta$, and when $\eta_n/n\rightarrow\infty$ the Frobenius distance diverges if $\eta_n=o(n^2)$, is bounded in probability if $\eta_n\asymp n^2$, and vanishes if $n^2=o(\eta_n)$.
\end{theorem}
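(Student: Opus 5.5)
The plan is to transfer everything through Proposition~\ref{prop:WishartGram}(i): $C_n$ has the law of $D^{-1}WD^{-1}$ with $W\sim W_n(\nu_n,I_n)$ and $\nu_n=n+2\eta_n-1$. For part~(i) the aspect ratio is $n/\nu_n=n/(n+2\eta_n-1)\to 1/(1+2\theta)=c_\theta\in(0,1]$. The ratio $\nu_n$ need not be an integer, so the Gram representation of part~(ii) is not directly available. I would handle this by writing $W=LL'$ via the Bartlett decomposition, which holds for real $\nu>n-1$. Then $C_n=\tilde L\tilde L'$, where $\tilde L$ is the row-normalized Bartlett factor, and one can compare $C_n$ with $W/\nu_n$.

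The first step is to show that $\max_i|W_{ii}/\nu_n-1|\to0$ in probability. Each $W_{ii}\sim\chi^2_{\nu_n}$, and a chi-square tail bound combined with a union over $n$ terms gives a deviation of order $\sqrt{\log n/\nu_n}$, which tends to zero. Writing $C_n=\Delta^{-1/2}(W/\nu_n)\Delta^{-1/2}$ with $\Delta=\operatorname{diag}(W_{ii}/\nu_n)$, the eigenvalues of $C_n$ and $W/\nu_n$ are then related multiplicatively by factors in $[\min\Delta^{-1},\max\Delta^{-1}]$, by Ostrowski's theorem. The Lévy distance between the two ESDs therefore vanishes, provided $\|W/\nu_n\|$ is tight. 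That tightness follows from the largest-eigenvalue bound $(1+\sqrt{c})^2$, or from a second-moment bound on the ESD.

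It then remains to obtain the Marchenko--Pastur law for $W/\nu_n$ with real degrees of freedom. In the Bartlett factor, the diagonal entries are $\chi_{\nu-i+1}$ and the off-diagonal entries are $N(0,1)$. For $\nu_n\ge n$ one could sandwich between integer degrees $\lfloor\nu_n\rfloor$ and $\lceil\nu_n\rceil$: a rank-one/rank-$O(1)$ interlacing argument shows that the ESDs differ by $O(1/n)$ in Kolmogorov distance, and the integer case $W=ZZ'$ is the classical Marchenko--Pastur theorem. Alternatively, one can simply invoke the general result for Wishart with $\nu_n/n\to1/c$. The case $\theta=0$, i.e.\ $c=1$, needs no extra care, because MP with ratio one is the limit and the hard edge plays no role in weak convergence.

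For part~(ii), the identity $\mathbb{E}C_{ij}^2=1/\nu_n$ follows from the marginal law of an entry. Under the Wishart normalization, $C_{12}$ is the correlation of the first two coordinates, and $(C_{12}+1)/2\sim\operatorname{Beta}(\tfrac{\nu-1}{2},\tfrac{\nu-1}{2})$. This is the LKJ marginal with $\beta=\eta+(n-2)/2$, which gives variance $1/\nu$ and hence the stated mean after summing $n(n-1)$ entries. For concentration I would compute the variance of $S=\sum_{i\ne j}C_{ij}^2$. Lemma~\ref{lem:pairwise} kills all covariances between distinct pairs, so $\operatorname{var}(S)=4\sum_{i<j}\operatorname{var}(C_{ij}^2)\le 4d\,\mathbb{E}C_{12}^4=O(n^2/\nu_n^2)$, using the Beta fourth moment $3/(\nu(\nu+2))$. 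Comparing with $(\mathbb{E}S)^2\asymp n^4/\nu_n^2$ gives a ratio $O(n^{-2})\to0$, and Chebyshev yields the ratio limit. I expect this to be the easy and robust step; note that it does not even need $n^2/\nu_n\to\infty$.

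The regime statements then follow from $\mathbb{E}\|C_n-I\|_F^2\sim n^2/(2\eta_n)$ when $\eta_n/n\to\infty$. This mean diverges, stays bounded, or vanishes according as $\eta_n=o(n^2)$, $\eta_n\asymp n^2$, or $n^2=o(\eta_n)$. In the bounded and vanishing cases Markov's inequality suffices; in the divergent case the ratio convergence transfers the divergence to $\|C_n-I_n\|_F^2$ itself.

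The main obstacle is part~(i) with noninteger $\nu_n$ together with the diagonal renormalization. The interlacing sandwich is what I would try first, so the key check is that the rank perturbation between adjacent integer degrees changes the ESD by $O(1/n)$.
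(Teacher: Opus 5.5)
\textbf{Assessment: there is a gap in the noninteger-$\nu_n$ step of part~(i).} Everything else matches the paper. Your part~(ii) is the paper's argument: the Beta marginal, $\mathbb{E}C_{ij}^4=3/(\nu_n(\nu_n+2))$, Lemma~\ref{lem:pairwise}, and Chebyshev with variance-to-squared-mean ratio at most $3/d$. Your diagonal renormalization in part~(i), a chi-square union bound plus Ostrowski, is equivalent to the paper's Courant--Fischer step.

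The gap is where you expected it: the Marchenko--Pastur law for $W/\nu_n$ when $\nu_n$ is not an integer. The sandwich between $\lfloor\nu_n\rfloor$ and $\lceil\nu_n\rceil$ has nothing to interlace.
\begin{itemize}
\item Only consecutive \emph{integer} degrees are related by a rank-one update, $W_n(k+1,I_n)\overset{d}{=}W_n(k,I_n)+zz'$ with $z\sim N(0,I_n)$ independent. The check you plan is true, but it never places $W_n(\nu_n,I_n)$ between the two. Since both integer ESDs converge to the same limit anyway, that check is not what is needed.
\item What is needed is a link from the fractional-degree Wishart to an integer one, and neither a low-rank nor a Loewner-monotone link is available. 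The fractional increment cannot be an independent summand, because $W_n(p,I_n)$ does not exist for $p\in(0,1)$ when $n\geq2$.
\item The natural Bartlett coupling $\tilde L=L+\Delta$, with $\Delta$ diagonal, does not help. It gives $\tilde W-W$ with diagonal entries $G_i=\tilde L_{ii}^2-L_{ii}^2$ and off-diagonal entries $L_{ij}\Delta_{jj}$ for $i>j$. This matrix is generically of full rank and indefinite: for $n=2$ its determinant is $G_1G_2-\Delta_{11}^2L_{21}^2$, which is negative with positive probability.
\item Your fallback, ``invoke the general result for Wishart with $\nu_n/n\to1/c$,'' is not the classical theorem either. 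That theorem is stated for $XX'$ with an integer number of columns. For real $\nu_n$ you would need a specific $\beta$-Laguerre result (e.g.\ via the tridiagonal model), and none is identified.
\end{itemize}

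\textbf{How to fix it.} Control the perturbation in norm rather than in rank.
\begin{itemize}
\item The paper augments the Bartlett diagonal: $\tilde L_{ii}=\sqrt{L_{ii}^2+G_i}$, with independent $G_i\sim\operatorname{Gamma}(\tfrac{m_n-\nu_n}{2},2)$ and $m_n=\lceil\nu_n\rceil$. Then $\tilde W=\tilde L\tilde L'\sim W_n(m_n,I_n)$ exactly.
\item The bounds $\|\Delta\|_F^2\leq\sum_iG_i=O_p(n)$ and $\|L\|=O_p(\sqrt{\nu_n})$ give $\tfrac1n\|(\tilde W-W)/\nu_n\|_F^2=O_p(1/\nu_n)$.
\item The Hoffman--Wielandt inequality then transfers the integer-degree limit from $\tilde W/\nu_n$ to $W/\nu_n$.
\item With the same coupling you could instead use Weyl's inequality for singular values: $\max_k|\sigma_k(\tilde L)-\sigma_k(L)|\leq\|\Delta\|\leq\max_i\sqrt{G_i}=O_p(\sqrt{\log n})=o(\sqrt{\nu_n})$. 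This avoids the operator-norm bound on $L$.
\end{itemize}
With this step replaced, the rest of your plan goes through.

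One smaller point: your appeal to the $(1+\sqrt{c})^2$ edge for tightness would itself need this coupling when $\nu_n$ is noninteger, but tightness is not required. Once the Ostrowski factors lie in $[1-\varepsilon,1+\varepsilon]$, you directly have $F_{W/\nu_n}(x/(1+\varepsilon))\leq F_{C_n}(x)\leq F_{W/\nu_n}(x/(1-\varepsilon))$.
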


Theorem~\ref{thm:MP} is the special case $\eta_n=1$ of part~(i): the uniform law is LKJ$(1)$, so $\eta_n/n\to0$, giving $\theta=0$ and $c_0=1$. The spectral result~(i) is the more interesting of the two. For any fixed $\eta$, however large, $\eta_n/n\to 0$ and the limiting spectrum is the Marchenko--Pastur law with ratio one, indistinguishable from the uniform distribution. Only when $\eta_n$ grows proportionally to $n$ does the aspect ratio shift away from one. The Frobenius result~(ii) is more mechanical: $\mathbb{E}\|C_n-I_n\|_F^2 = n(n-1)/(n+2\eta_n-1)\asymp n^2/(n+\eta_n)$, and when $\eta_n/n\rightarrow\infty$ this is $\sim n^2/(2\eta_n)$, so boundedness requires $\eta_n\asymp n^2$ to offset the $d\sim n^2/2$ off-diagonal terms. When $\nu_n$ is an integer, the prior is moreover a sample correlation matrix in the sense of Proposition~\ref{prop:WishartGram}(ii). Figure~\ref{fig:LKJphase} illustrates the two-scale structure.

\begin{figure}[!htbp]
\centering
\includegraphics[width=0.8\textwidth]{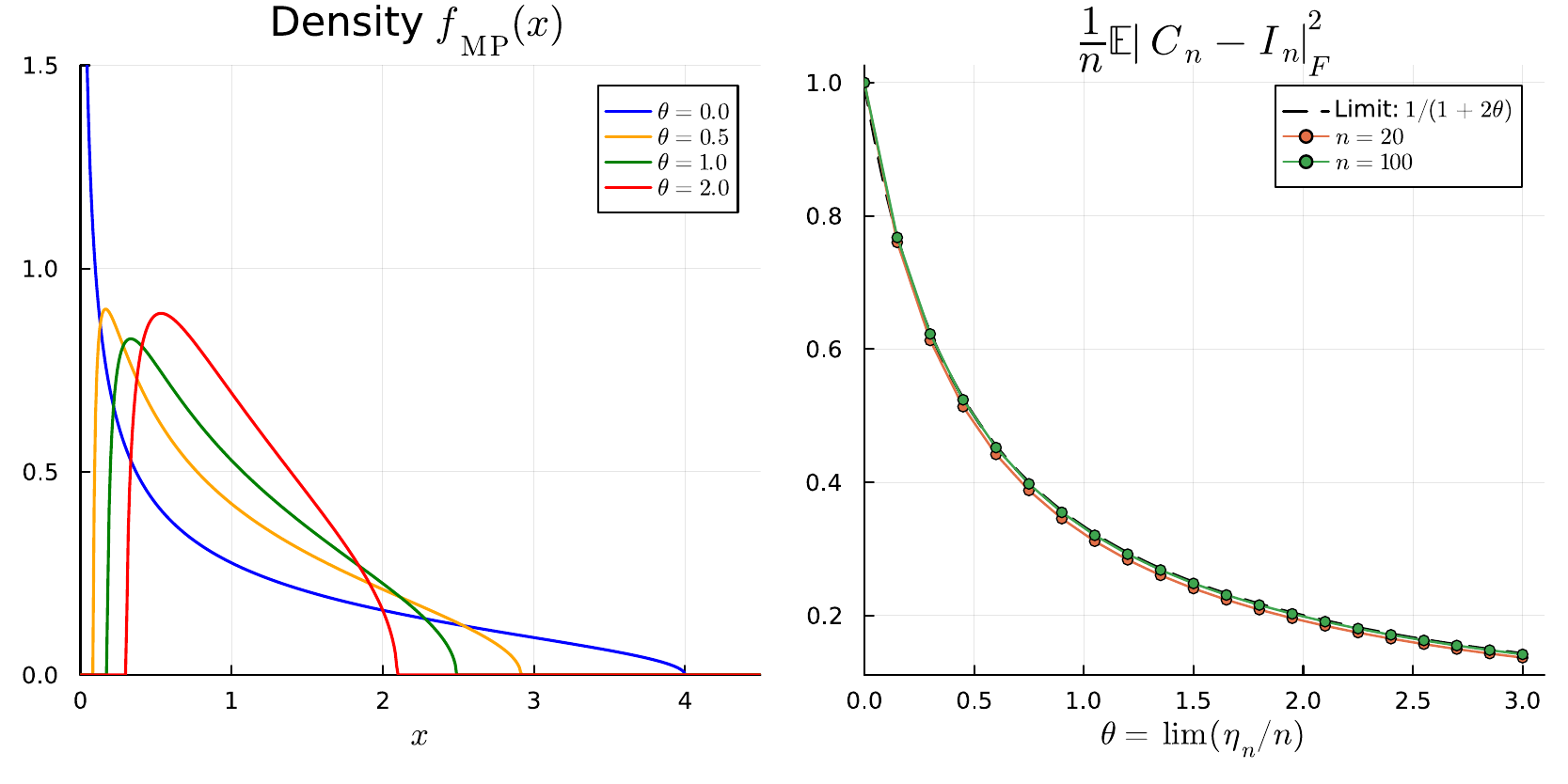}
\caption{Dimension-dependent LKJ scaling. The left panel shows the Marchenko--Pastur limits from Theorem~\ref{thm:LKJscaling} when $\eta_n/n\rightarrow\theta$, so that the limiting aspect ratio is $c_\theta=1/(1+2\theta)$. The right panel shows the \emph{normalized} Frobenius scale under linear scaling $\eta_n=\theta n$, namely $\tfrac{1}{n}\mathbb{E}\|C_n-I_n\|_F^2=(n-1)/(n+2\eta_n-1)$, evaluated at $n=20$ and $n=100$, together with its limit $1/(1+2\theta)$ from Theorem~\ref{thm:LKJscaling}(ii). Linear scaling thus changes the limiting spectrum and the normalized Frobenius scale, but leaves $\|C_n-I_n\|_F^2$ of order $n$; quadratic scaling $\eta_n\asymp n^2$ is needed to keep the unnormalized Frobenius distance from the identity bounded.}
\label{fig:LKJphase}
\end{figure}

The central fact about fixed hyperparameters is that they are all alike to first order. Under $\operatorname{LKJ}(\eta)$ the marginal variance of an off-diagonal entry is exactly
\begin{equation}\label{eq:LKJvar}
\operatorname{var}(C_{ij})=\frac{1}{n+2\eta-1}=\frac{1}{\nu_n}\sim\frac{1}{n}\qquad\text{for every fixed }\eta>0,
\end{equation}
and by Theorem~\ref{thm:LKJscaling}(i) with $\theta=0$ the limiting spectrum is the Marchenko--Pastur law with ratio one, again for every fixed $\eta>0$. Entrywise concentration near $I_n$ is therefore not a feature of the tilt $\det(C)^{\eta-1}$ with $\eta>1$; it is a feature of the elliptope, and it occurs for $0<\eta<1$ as well. What a fixed $\eta$ does change is the finite-$n$ constant in (\ref{eq:LKJvar}) and, more visibly, the determinant and conditioning behavior, as we now describe.

For any correlation matrix $C$, $0\leq\det(C)\leq1$ with equality if and only if $C=I_n$. \citet{Joe:2006} shows that $\det(C)=\prod_{i<j}(1-\varrho_{ij}^2)$, so nonzero partial correlations reduce the determinant and move the matrix toward the boundary in the log-determinant sense. The scale of this effect is quantified by Proposition~\ref{prop:WishartGram}(i): the Bartlett decomposition of $W\sim W_n(n+1,I_n)$ gives the exact expression for $\mathbb{E}\log\det(C)$ stated in Remark~\ref{rem:logdet}. 

This determinant penalty interacts with the volume collapse of the elliptope. As shown in Section~\ref{sec:volume}, the total volume $\operatorname{Vol}(\mathcal{E}_n)$ decays super-exponentially, meaning the set of all valid correlation matrices is already vanishingly small and entrywise confined near $I_n$. When $\eta>1$, the factor $\det(C)^{\eta-1}$ in the LKJ$(\eta)$ density compounds this thinness by down-weighting matrices away from the center; when $0<\eta<1$, the same factor favors small determinants and tilts mass toward the singular boundary instead.

For $C$ uniformly distributed on $\mathcal{E}_{n}$, the mean of $\log\det(C)$ is available in closed form. Writing $C=D^{-1}WD^{-1}$ as in Proposition~\ref{prop:WishartGram}, where $W$ is Wishart with $k=n+1$ degrees of freedom and $D=\operatorname{diag}(\sigma_1,\ldots,\sigma_n)$ with $\sigma_i^2=W_{ii}$, the Bartlett decomposition gives $\det(W)\overset{d}{=}\prod_{i=1}^{n}\chi^2_{k-i+1}$ while $W_{ii}\sim\chi^2_{k}$, and $\mathbb{E}[\log\chi^2_m]=\psi(\tfrac{m}{2})+\log2$ then yields
$$
\mathbb{E}[\log\det(C)]=\sum_{j=2}^{n+1}\psi(\tfrac{j}{2})-n\psi(\tfrac{n+1}{2})=-n+\tfrac{1}{2}\log(2n)+\tfrac{1+\gamma}{2}+o(1),
$$
where $\psi$ is the digamma function and $\gamma$ is the Euler--Mascheroni constant. On the log scale, the determinant of a typical high-dimensional correlation matrix is therefore of order $-n$, even under a uniform prior, in agreement with Theorem~\ref{thm:MP} and $\int\log xf_{\mathrm{MP}}(x)dx=-1$; the distributional asymptotics of $\det(C)$ for the uniform law are studied in \citet{HaneaNane:2018}. 

Because the LKJ$(\eta)$ density is proportional to $\det(C)^{\eta-1}$, this exponentially small determinant contributes a factor of order $\exp\{-(\eta-1)n\}$ to the density. For $\eta>1$ the prior therefore places exponentially more mass on matrices with relatively large determinants, and for $0<\eta<1$ the negative exponent runs the same argument in reverse, tilting mass toward smaller determinants and more ill-conditioned matrices. Neither tilt is strong enough to matter at first order. By (\ref{eq:LKJvar}) the marginal variance is $1/\nu_n\sim1/n$ in both cases, by Theorem~\ref{thm:LKJscaling}(i) the limiting spectrum is the Marchenko--Pastur law with ratio one in both cases, and $\mathbb{E}\|C-I_n\|_F^2=n(n-1)/\nu_n\approx n$ diverges in both cases. In summary, a fixed $\eta>0$ alters finite-dimensional and lower-order features of the prior, in particular its determinant and conditioning behavior, but changes neither the first-order marginal variance nor the limiting spectral distribution; linear growth $\eta_n\asymp n$ is required to change the spectral limit, and quadratic growth $\eta_n\asymp n^2$ is required to keep the Frobenius distance from the identity bounded. This is what gives Theorem~\ref{thm:LKJscaling} its force: the hyperparameter must be tied to the dimension before it does anything to the two scales of Section~\ref{sec:concentration}.

A natural question within the LKJ family is whether dimension-dependent tuning of $\eta$ can prevent the entrywise shrinkage altogether. By (\ref{eq:LKJvar}), keeping $\operatorname{var}(C_{ij})$ bounded away from zero as $n\to\infty$ would require $\eta_n\sim -n/2$. This is a constraint specific to the LKJ parameterization, not a universal impossibility for Bayesian priors on correlation matrices: the density $\pi_\eta(C)\propto\det(C)^{\eta-1}$ is integrable only for $\eta>0$, so no valid member of the LKJ family can maintain dimension-stable marginal variances as $n$ grows. Priors built through an unconstrained reparameterization, such as the Generalized Fisher Transformation \citep{ArchakovHansen:Correlation} or partial-correlation sequences \citep{Joe:2006,LewandowskiKurowickaJoe:2009}, can place independent, dimension-stable distributions on the transformed parameters and thereby escape this constraint; see \citet{ArchakovHansenLuo-RandomCorr:2024} for an implementation.

Thus every admissible LKJ sequence satisfies $\operatorname{var}(C_{ij})\leq1/(n-1)\rightarrow0$. The two scales in Theorem~\ref{thm:LKJscaling} determine when its spectral and Frobenius behavior can nevertheless change.

\subsection{Precision Matrices and the Edge of Invertibility}\label{sec:precision}

Many of the applications described above require not the correlation matrix itself but its inverse: minimum-variance portfolio weights are proportional to $\Sigma^{-1}\iota$, generalized least squares is built on $C^{-1}$, and partial correlations are read off the precision matrix. The spectral law of Theorem~\ref{thm:MP} implies that the inverse of a typical correlation matrix is poorly behaved. The following proposition states two known consequences of the Marchenko--Pastur theory \citep{BaiSilverstein:2010,Jiang:2004} applied to the uniform law and to sample correlation matrices.

\begin{proposition}\label{prop:precision}
(i) Let $C_n$ be uniformly distributed on $\mathcal{E}_n$. Then
$$
\tfrac{1}{n}\operatorname{tr}(C_n^{-1})\rightarrow\infty,\qquad\text{in probability}.
$$
(ii) Let $\hat{C}_n$ be the sample correlation matrix of $n$ independent Gaussian variables based on $T$ observations, where $n/T\rightarrow c\in(0,1)$. Then
$$
\tfrac{1}{n}\operatorname{tr}(\hat{C}_n^{-1})\rightarrow\frac{1}{1-c},\qquad\text{almost surely}.
$$
\end{proposition}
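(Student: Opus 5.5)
For part~(i) I would work with the empirical spectral distribution $F_n$ of Theorem~\ref{thm:MP} and write
$$
\tfrac{1}{n}\operatorname{tr}(C_n^{-1})=\int x^{-1}\,dF_n(x).
$$
Fix $\varepsilon>0$ and let $g_\varepsilon(x)=1/\max(x,\varepsilon)$. This function is bounded and continuous on $[0,\infty)$, and $x^{-1}\geq g_\varepsilon(x)$ for $x>0$. By Theorem~\ref{thm:MP}, $F_n$ converges weakly in probability to the Marchenko--Pastur law with ratio one, so
$$
\int g_\varepsilon\,dF_n\to\int g_\varepsilon f_{\mathrm{MP}}
$$
in probability. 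The limit diverges as $\varepsilon\downarrow0$, because $f_{\mathrm{MP}}(x)\sim(2\pi)^{-1}x^{-1/2}$ near zero and therefore $\int_0 x^{-1}f_{\mathrm{MP}}(x)\,dx=\infty$. Now take any $M$ and choose $\varepsilon$ with $\int g_\varepsilon f_{\mathrm{MP}}>2M$. Then $\Pr\{\tfrac{1}{n}\operatorname{tr}(C_n^{-1})>M\}\to1$, which is the claimed divergence in probability. Invertibility is not an issue: $\lambda_{\min}(C_n)>0$ almost surely by Theorem~\ref{thm:specfloor}.

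For part~(ii) the lower bound goes the same way, with the Marchenko--Pastur law of ratio $c$ in place of ratio one. For the Pearson sample correlation matrix with $n/T\to c\in(0,1)$, the spectral law converges almost surely to MP$(c)$ by \citet{Jiang:2004} or \citet{BaiSilverstein:2010}. Truncation then gives
$$
\liminf_n \tfrac1n\operatorname{tr}(\hat C_n^{-1})\geq\int x^{-1}\,dF_{\mathrm{MP},c}
$$
almost surely. The right-hand side equals the standard value $1/(1-c)$, for example from the Stieltjes transform at $z=0$.

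The matching upper bound is the main obstacle, because $x^{-1}$ is unbounded near zero and weak convergence alone cannot control it. My approach is to establish a hard-edge bound,
$$
\liminf_n\lambda_{\min}(\hat C_n)\geq(1-\sqrt c)^2>0\quad\text{almost surely.}
$$
For this I would use $\hat C_n=D^{-1}SD^{-1}$, where $S$ is the sample covariance. Two ingredients are needed:
\begin{itemize}
\item the Bai--Yin theorem, which gives $\lambda_{\min}(S)\to(1-\sqrt c)^2$ almost surely after centering (centering only changes $T$ to $T-1$);
\item the fact that $\max_i|S_{ii}-1|\to0$ almost surely, which follows from chi-square concentration plus Borel--Cantelli over $n$ indices, since the tails are $e^{-cT}$.
\end{itemize}
Together these give $\lambda_{\min}(\hat C_n)\geq\lambda_{\min}(S)/\max_i S_{ii}$, and the edge bound follows. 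On that event $x^{-1}$ is bounded and continuous on the relevant support, so almost sure weak convergence yields $\tfrac1n\operatorname{tr}(\hat C_n^{-1})\to1/(1-c)$.

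An alternative to the edge bound is an exact computation. The diagonal of $\hat C^{-1}$ is $1/(1-R_i^2)$, where $R_i^2$ is a multiple-correlation coefficient with a Beta law. This gives $\mathbb{E}\,\tfrac1n\operatorname{tr}\hat C^{-1}\approx (T-2)/(T-n-1)\to1/(1-c)$, but it only yields convergence of the mean, so I would use the edge argument for the almost sure statement.
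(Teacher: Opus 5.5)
Your proposal is correct and follows essentially the paper's proof. In part~(i) your $g_\varepsilon=1/\max(x,\varepsilon)$ is the paper's truncation $g_M=\min(x^{-1},M)$ with $M=1/\varepsilon$, used against weak convergence in probability. In part~(ii) you combine almost-sure weak convergence to the Marchenko--Pastur law with ratio $c$ with the hard-edge bound from the smallest-eigenvalue limit for $S$ plus Borel--Cantelli control of $\max_i|S_{ii}-1|$, exactly as the paper does; your one-sided inequality $\lambda_{\min}(\hat C_n)\geq\lambda_{\min}(S)/\max_i S_{ii}$ is a leaner version of the paper's two-sided Courant--Fischer comparison, and the separate truncation lower bound is redundant once the edge bound is in hand. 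One harmless slip: near zero $f_{\mathrm{MP}}(x)\sim\pi^{-1}x^{-1/2}$, not $(2\pi)^{-1}x^{-1/2}$, which does not affect the divergence.
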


Both parts follow from $\tfrac{1}{n}\operatorname{tr}(C^{-1})=\tfrac{1}{n}\sum_i\lambda_i^{-1}\to\int x^{-1}f_{\mathrm{MP},c}(x)dx=\tfrac{1}{1-c}$ for $c<1$, with part~(i) the limiting case $c\to1$. Part (i) is the spectral counterpart of boundary concentration. The Marchenko--Pastur density with ratio one behaves like $\pi^{-1}x^{-1/2}$ near zero, placing enough mass on near-singular matrices that the average inverse eigenvalue diverges: a typical correlation matrix has, per coordinate, infinite average precision. Part (ii) shows how this divergence builds up as the sample size shrinks toward the dimension. The limit $1/(1-c)$ is the familiar risk-inflation factor of plug-in estimators, and it explodes precisely as $c\rightarrow1$. The uniform measure on $\mathcal{E}_n$, which by Proposition~\ref{prop:WishartGram} corresponds to $T=n+1$, sits exactly at the edge where average precision ceases to exist. The same threshold appears in the exact Wishart identity $\mathbb{E}[\hat{\Sigma}^{-1}]=\tfrac{T}{T-n-1}\Sigma^{-1}$ for Gaussian covariance matrices, which is finite only for $T>n+1$.

This makes the instability of inverse problems quantitative. Any procedure that inverts an unregularized correlation matrix estimated with $T$ close to $n$ operates in a regime where the relevant population benchmark, the average precision of a typical feasible matrix, is infinite (Proposition~\ref{prop:precision}(i)). Shrinkage, factor structure, or explicit regularization of the spectrum, such as the condition-number regularization of \citet{WonLimKimRajaratnam:2013}, is therefore not merely advisable but necessary for the inverse to be statistically meaningful.

The Marchenko--Pastur law describes the bulk of the spectrum; the following two remarks quantify the smallest eigenvalue and the log-determinant.

\begin{remark}[Hard edge]\label{rem:lambdamin}
The Marchenko--Pastur law places the left edge of its support at zero when the ratio is one, so $\lambda_{\min}(C_n)\to0$ in probability (almost surely under the coupling of Remark~\ref{rem:coupling}). The rate and the limit law are exact consequences of Theorem~\ref{thm:specfloor}: since $\lambda_{\min}(C_n)\sim\operatorname{Beta}(1,d)$,
$$
\Pr\bigl(n^2\lambda_{\min}(C_n)>t\bigr)=\bigl(1-t/n^2\bigr)^{d}\rightarrow e^{-t/2},
\qquad\text{i.e.}\qquad
n^2\lambda_{\min}(C_n)\overset{d}{\rightarrow}\operatorname{Exp}(\text{mean }2),
$$
so $\lambda_{\min}(C_n)=O_p(n^{-2})$, far smaller than the $O(1)$ bulk eigenvalue scale. This is consistent with the classical hard-edge theory: the uniform law is the sample-correlation ensemble with $T=n+1$, so the ensemble sits at the hard edge of the real Laguerre family, where \citet[proposition~7.2.1]{Forrester:2010} gives $4n\lambda_{\min}(W)\overset{d}{\rightarrow}\xi_{\min}$ for $W\sim W_n(n+1,I_n)$, with $\xi_{\min}$ the smallest point of the $\beta=1$ Bessel point process. Passing to $C_n$ by the diagonal-normalization argument of Step~3 in the proof of Theorem~\ref{thm:LKJscaling} yields $n^2\lambda_{\min}(C_n)\overset{d}{\rightarrow}\tfrac14\xi_{\min}$, and matching the two limits shows that $\tfrac14\xi_{\min}$ is exponential with mean two. The exponential law itself is classical: at $\nu=n+1$ the smallest Wishart eigenvalue satisfies $\Pr(\lambda_{\min}(W)>x)=e^{-nx/2}$ exactly for every $n$, a case of the finite-$n$ results of \citet{Edelman:1991}; Theorem~\ref{thm:specfloor} gives an independent derivation on the correlation side.
\end{remark}

\begin{remark}[CLT for $\log\det C_n$]\label{rem:logdet}
For $C_n$ uniformly distributed on $\mathcal{E}_n$, \citet[theorem~3]{HaneaNane:2018} establish asymptotic normality of $\log\det C_n$ using the Beta factorization $\log\det C_n=\sum_{j=1}^{n-1}\log B_j$, $B_j\sim\operatorname{Beta}(\tfrac{j+1}{2},\tfrac{n-j}{2})$, which underlies Proposition~\ref{prop:WishartGram}(i), via a Lyapunov CLT for triangular arrays. Equivalently, using the expansion of the exact mean, the result reads
$$
\frac{\log\det C_n + n - \tfrac{1}{2}\log(2n) - \tfrac{1+\gamma}{2}}{\sqrt{2\log(n/2)}} \overset{d}{\rightarrow} N(0,1),
$$
because $\mathbb{E}[\log\det C_n]=-n+\tfrac{1}{2}\log(2n)+\tfrac{1+\gamma}{2}+o(1)$ by the digamma expression in Section~\ref{sec:LKJ}. The centering displayed in \citet[theorem~3]{HaneaNane:2018} differs from this in the sign of the logarithmic term; the discrepancy appears to be a misprint; Section~\ref{sec:centering} of the Supplement gives the numerical evidence and the reconciliation with \citet{ParolyaHeinyKurowicka:2024}. The $-n$ centering comes from the bulk: $\int_0^4\log(x)f_{\mathrm{MP}}(x)dx=-1$, so $\sum_i\log\lambda_i\approx -n$.
\end{remark}

\subsection{Entrywise Specification and the Nearest Correlation Matrix}\label{sec:nearest}

In practice, correlation matrices are often specified entrywise rather than estimated jointly: experts elicit pairwise correlations one at a time, regulators prescribe correlation shocks in stress scenarios, and pairwise estimates are assembled from different samples, time windows, or data sources. The result is a symmetric matrix $\tilde{C}$ with unit diagonal and entries in $[-1,1]$ that need not be positive semidefinite. If the $d$ off-diagonal entries are specified independently from any distribution whose density is bounded by $K$, the probability of validity is at most $K^d\operatorname{Vol}(\mathcal{E}_n)$, which vanishes super-exponentially by Proposition~\ref{prop:VolumeCn} for every fixed $K$. The failure concerns nondegenerate, dimension-invariant independent entry distributions: under the bounded-density condition above, the probability of validity vanishes super-exponentially. For the centered Wigner-type specifications considered below, scaling the entries by $o(n^{-1/2})$ restores positive definiteness with probability tending to one, as quantified at the end of this subsection. Validity at a fixed entrywise scale can only be achieved by coordinating the entries, as structured specifications such as equicorrelation do.

The standard remedy is to replace $\tilde{C}$ by the nearest correlation matrix,
$$
C^{*}=\arg\min_{C\in\mathcal{E}_n}\|\tilde{C}-C\|_F,
$$
see \citet{Higham:2002}. The following result gives a lower bound on the repair cost: since $\mathcal{E}_n$ is contained in the positive semidefinite cone, the distance from $\tilde{C}$ to $\mathcal{E}_n$ is at least as large as its distance to the cone, which is easier to compute via the Wigner semicircle law.

\begin{theorem}\label{thm:nearest}
Let $E_{ij}$, $i<j\in\mathbb{N}$, be independent and identically distributed random variables with values in $[-1,1]$, mean zero, and variance $\sigma^2>0$. For each $n$, let $\tilde{C}=I_n+E$, where $E$ is the symmetric $n\times n$ matrix with zero diagonal and off-diagonal entries $E_{ij}$, and let $C^{*}$ be the nearest correlation matrix to $\tilde{C}$ in the Frobenius norm. Then
$$
\liminf_{n\rightarrow\infty}\frac{\|\tilde{C}-C^{*}\|_F^2}{\|\tilde{C}-I_n\|_F^2}\geq\frac{1}{2}\qquad\text{almost surely}.
$$
\end{theorem}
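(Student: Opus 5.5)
Since $\mathcal{E}_n\subseteq\mathbb{S}^n_+$, the repair cost is bounded below by the distance to the cone:
$$
\|\tilde{C}-C^{*}\|_F^2=\min_{C\in\mathcal{E}_n}\|\tilde{C}-C\|_F^2\;\geq\;\min_{S\succeq0}\|\tilde{C}-S\|_F^2=\sum_{i=1}^n\bigl(\lambda_i(\tilde{C})\bigr)_-^2 .
$$
The last equality is the standard formula for the Frobenius projection onto the PSD cone: diagonalize and clip negative eigenvalues at zero. Since $\tilde{C}=I_n+E$, we have $\lambda_i(\tilde{C})=1+\lambda_i(E)$. The denominator is $\|\tilde{C}-I_n\|_F^2=\|E\|_F^2=\sum_i\lambda_i(E)^2$. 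So the ratio is bounded below by
$$
\frac{\sum_i(1+\lambda_i(E))_-^2}{\sum_i\lambda_i(E)^2}
=\frac{\frac1n\sum_i(1+\sqrt n\,\mu_i)_-^2}{\frac1n\sum_i n\mu_i^2},
\qquad \mu_i=\lambda_i(E)/\sqrt n .
$$

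Divide numerator and denominator by $n^2$. The denominator becomes $\frac1n\sum_i\mu_i^2=\frac{1}{n^2}\|E\|_F^2=\frac{2}{n^2}\sum_{i<j}E_{ij}^2$. By the strong law of large numbers this tends to $\sigma^2$ almost surely; the entries are bounded, so an SLLN over the growing triangular index set applies, since the variables are the fixed array $E_{ij}$, $i<j\in\mathbb{N}$. The numerator becomes $\frac1n\sum_i(n^{-1/2}+\mu_i)_-^2$. By Wigner's semicircle law, the empirical spectral distribution of $E/\sqrt n$ converges weakly almost surely to the semicircle law $\rho_\sigma$ on $[-2\sigma,2\sigma]$; the entries are i.i.d., centered, bounded, with variance $\sigma^2$ and zero diagonal, so the standard a.s. version of the law applies. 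We then want
$$
\frac1n\sum_i\bigl(n^{-1/2}+\mu_i\bigr)_-^2\;\to\;\int x_-^2\,\rho_\sigma(dx)=\tfrac12\int x^2\rho_\sigma(dx)=\tfrac{\sigma^2}{2}\quad\text{a.s.}
$$
The middle equality uses the symmetry of the semicircle law. The ratio then tends to $\tfrac12$, which gives the $\liminf$ bound.

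The main obstacle is justifying the last convergence, since $x\mapsto x_-^2$ is continuous but unbounded and the shift $n^{-1/2}$ varies with $n$. I would handle the shift with the elementary inequality $|(a+h)_-^2-a_-^2|\le 2|h||a|+h^2$. With $h=n^{-1/2}$ this gives an error of at most $2n^{-1/2}\bigl(\frac1n\sum\mu_i^2\bigr)^{1/2}+n^{-1}\to0$ a.s., by Cauchy–Schwarz and the SLLN bound above. For the unbounded test function I would use uniform integrability. The second moments $\frac1n\sum\mu_i^2$ converge a.s. to $\sigma^2=\int x^2d\rho_\sigma$, and weak convergence plus convergence of second moments gives convergence of $\int f$ for every continuous $f$ with $|f(x)|\le Cx^2$. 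Concretely, truncate $f=x_-^2$ at level $M$ and control the remainder through $\int x^2 1\{|x|>M\}$, whose limsup is bounded by $\sigma^2-\int x^2\wedge$-truncated mass. Alternatively, invoke the a.s. convergence of the spectral norm $\|E\|/\sqrt n\to2\sigma$ (Bai–Yin, valid for bounded entries), which makes $f$ effectively bounded on the support. I prefer the moment argument because it avoids edge results.
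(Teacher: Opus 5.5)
Your proof is correct and follows the paper's route: the PSD-cone lower bound, $\lambda_i(\tilde C)=1+\lambda_i(E)$, the almost-sure semicircle law with the symmetry identity $\int x_-^2\,d\rho_\sigma=\sigma^2/2$, and the law of large numbers for $\|E\|_F^2$. The one difference is the limit step: the paper uses almost-sure convergence of the extreme eigenvalues to the edges of the semicircle support, so that uniform convergence on compacts is enough, whereas you use your explicit $n^{-1/2}$-shift bound together with uniform integrability from second-moment convergence (the same LLN you already need for the denominator), which avoids edge results; for the $\liminf$ alone even that is more than necessary, since for nonnegative continuous $f$ weak convergence already gives $\liminf_n\int f\,dF_n\geq\int f\,d\rho_\sigma$ by truncation.
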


The theorem is a statement about repair cost, not about the deletion of individual entries: the squared Frobenius distance from $\tilde{C}$ to the nearest correlation matrix is asymptotically at least one-half of the squared Frobenius norm of the off-diagonal part, $\|\tilde{C}-I_n\|_F^2$, of the original specification. The proof rests on two observations: the distance from $\tilde{C}$ to $\mathcal{E}_n$ is bounded below by the distance to the positive semidefinite cone, and the semicircle law spreads the eigenvalues of $E$ over $(-2\sigma\sqrt{n},2\sigma\sqrt{n})$, so that close to half of the eigenvalues of $\tilde{C}$ are negative and of order $\sigma\sqrt{n}$. The constant $\tfrac{1}{2}$ is an asymptotic lower bound; the unit-diagonal constraint can only increase the distance.

The PSD lower bound used in the proof is itself asymptotically exact as a bound: by symmetry of the Wigner semicircle, $\int_{-2}^{0}s^{2}\rho_{\mathrm{sc}}(s)ds=\tfrac{1}{2}$, so the PSD-projection loss converges to $\tfrac{1}{2}$ rather than merely exceeding it. Whether the unit-diagonal constraint makes the actual limiting repair cost strictly larger than $\tfrac{1}{2}$ is an open question. The finite-$n$ simulations reported in Table~\ref{tab:ncm} of the Supplement show the realized repair ratio above the finite-$n$ PSD bound at every dimension considered, with a gap that widens over the range $n\leq400$, which is suggestive but not conclusive: both quantities are still far from their limits at these dimensions. Determining the limiting ratio appears to require an analysis of the nearest-correlation projection in the random-matrix limit, likely within the framework of free probability theory, and we leave this as an open problem.

The bound also identifies $n^{-1/2}$ as the natural transition scale for the specified entries. Consider the scaled specification $\tilde{C}_n=I_n+a_nE_n$, where $E_n$ is the Wigner matrix of Theorem~\ref{thm:nearest} and $a_n>0$ is deterministic. The eigenvalues of $a_nE_n$ spread over $\pm2\sigma a_n\sqrt{n}$. If $a_n\sqrt{n}\rightarrow\infty$, the unit diagonal is asymptotically negligible relative to the perturbation, the proof of Theorem~\ref{thm:nearest} applies verbatim, and the repair cost is again at least half of $\|a_nE_n\|_F^2$. If $a_n\sqrt{n}\rightarrow0$, then $\lambda_{\min}(\tilde{C}_n)\geq1-2\sigma a_n\sqrt{n}(1+o(1))>0$ eventually, so $\tilde{C}_n$ is a valid correlation matrix with probability tending to one and the repair cost vanishes. At the critical scale $a_n=c/\sqrt{n}$ the outcome depends on the constant: for $2\sigma c<1$ the matrix is asymptotically valid, whereas for $2\sigma c>1$ a positive fraction of the spectrum is negative and the repair cost remains a positive fraction of $\|a_nE_n\|_F^2$, a fraction that grows with $c$. There is thus no single unconditional transition point, but $n^{-1/2}$ is the boundary between the two regimes. In high dimensions, entrywise elicitation and positive semidefinite repair are therefore conflicting rather than complementary steps, and coherent specification requires parameterizations that enforce validity from the outset, such as the unconstrained transformations discussed in the Introduction \citep{ArchakovHansen:Correlation}. A related positive-definiteness issue arises when a correlation matrix is hard-thresholded to obtain a sparse estimate; see \citet{GuillotRajaratnam:2012}.

\section{Conclusion}

The requirement that a correlation matrix must be positive semidefinite imposes severe, highly nonlinear constraints on its individual elements. In this paper, we have quantified exactly how restrictive these constraints become in high dimensions, using the classical volume formula for the elliptope, $\mathcal{E}_n$, together with a refined asymptotic expansion. The volume of $\mathcal{E}_n$ has logarithmic leading term $-\tfrac{1}{4}n^2\log n$, meaning the space of valid correlation matrices becomes vanishingly small compared to its enclosing hypercube; our contribution lies in the probabilistic consequences that follow from it. Furthermore, we established a tension between local and global behavior: while the elliptope concentrates sharply around the identity matrix in the entrywise max norm, it simultaneously expands globally in the Frobenius norm, placing the bulk of the volume near the singular boundary. This is matched by an exact sample-correlation representation, which implies the Marchenko--Pastur spectral limit: the uniform distribution on $\mathcal{E}_n$ coincides with the distribution of an uncentered sample correlation matrix computed from $T=n+1$ Gaussian observations, so the empirical eigenvalue distribution of a typical correlation matrix converges to the Marchenko--Pastur law with ratio one. A uniform draw is therefore asymptotically ill-conditioned, despite having uniformly small correlations. There is an interesting but imperfect analogy between $\mathcal{E}_n$ and high-dimensional unit balls; we discuss this in Section~\ref{sec:unitball} of the Supplement.

This structure is relevant for high-dimensional models in statistics, finance, and machine learning. In portfolio optimization, it explains the instability of classical minimum-variance portfolios when the dimension is comparable to the sample size. In Bayesian modeling, it clarifies how the LKJ family behaves as the dimension grows: every fixed hyperparameter already produces entrywise marginal variance of order $1/n$, growth $\eta_n\asymp n$ is needed to change the limiting spectrum, and growth $\eta_n\asymp n^2$ is needed to control the global Frobenius distance from the identity; no choice of $\eta$ avoids the dimension-induced entrywise shrinkage, which is a property of the elliptope itself. The ill-conditioning we quantified in Proposition~\ref{prop:precision} makes the remedy precise: any inference that requires inverting an unregularized high-dimensional correlation matrix operates in a regime where the average precision diverges, motivating shrinkage estimators, factor-model decompositions, or graphical-lasso regularization as structural necessities rather than optional refinements.

Beyond explaining existing phenomena, our results provide a starting point for structurally coherent null models in correlation-network analysis, where the independent-edge benchmark ignores the dependence induced by thresholding a single positive semidefinite matrix. Theorem~\ref{thm:PPP} shows that the exceedance set of a uniformly drawn correlation matrix converges to a Poisson point process, from which the Poisson edge count, triangle absence, and Gumbel maximum-correlation fluctuations all follow as corollaries; the statement specializes known point-process theory for sample correlations \citep{HeinyMikoschYslas:2021}, but the exact pairwise-independence proof extends to all LKJ parameters and yields an explicit $O(n^{-1})$ total-variation bound for finite-dimensional exceedance counts, and a more complete treatment of correlation-network inference is left to future work.

Ultimately, the super-exponential volume collapse of $\mathcal{E}_n$ means that unstructured parameterizations of correlation matrices almost never yield a valid matrix in high dimensions. Theorem~\ref{thm:nearest} makes the repair cost explicit: for bounded, centered i.i.d.\ off-diagonal specifications, replacing the specified matrix by its nearest valid correlation matrix has a squared repair cost that is asymptotically at least one-half of the squared Frobenius norm of the off-diagonal part, so post-hoc projection is a substantial reconstruction rather than a minor correction. Robust modeling therefore requires parameterizations that enforce the positive semidefinite constraint from the outset; the Generalized Fisher Transformation \citep{ArchakovHansen:Correlation} provides an elegant solution by mapping the interior of $\mathcal{E}_n$ bijectively onto an unconstrained space, enabling dimension-stable priors and unrestricted optimization without ever leaving the feasible set.

\appendix
\setcounter{equation}{0}\renewcommand{\theequation}{A.\arabic{equation}}
\setcounter{table}{0}\renewcommand{\thetable}{A.\arabic{table}}

\section{Proofs of Main Results}

This appendix proves the results that carry the paper's main contributions. The more routine proofs (Propositions~\ref{prop:VolumeCn}, \ref{prop:entrywise}, \ref{prop:C-I}, \ref{prop:WishartGram}, and \ref{prop:precision}, and Theorem~\ref{thm:MP}) are given in Sections~\ref{sec:barnes} and~\ref{sec:auxproofs} of the Supplement.

\noindent{}{\bf Proof of Theorem~\ref{thm:specfloor}.}
Fix $x\in[0,1)$. If $C\in\mathcal{E}_n$ satisfies $\lambda_{\min}(C)\geq x$, then $B=(C-xI_n)/(1-x)$ is symmetric with unit diagonal and $B\succeq0$, so $B\in\mathcal{E}_n$ and $C=xI_n+(1-x)B$. Conversely, if $B\in\mathcal{E}_n$, then $C=xI_n+(1-x)B$ is symmetric with unit diagonal and $C-xI_n=(1-x)B\succeq0$. Hence
$$
\{C\in\mathcal{E}_n:\lambda_{\min}(C)\geq x\}=xI_n+(1-x)\mathcal{E}_n.
$$
In the off-diagonal coordinates $(C_{ij})_{i<j}\in\mathbb{R}^d$, the map $B\mapsto xI_n+(1-x)B$ acts as $C_{ij}=(1-x)B_{ij}$, a linear map with Jacobian determinant $(1-x)^d$, and the volume identity follows from the change of variables for Lebesgue measure. At $x=1$ the set is $\{I_n\}$, which has zero $d$-dimensional volume for $n\geq2$, in agreement with $(1-x)^d=0$.\hfill$\square$

\noindent{}{\bf Proof of Lemma~\ref{lem:pairwise}.}
By Proposition~\ref{prop:WishartGram}(i), $C\sim\operatorname{LKJ}(\eta)$ equals $D^{-1}WD^{-1}$ with $W\sim W_n(\nu,I_n)$, $\nu=n+2\eta-1$, $D=\operatorname{diag}(W_{11}^{1/2},\ldots,W_{nn}^{1/2})$. The Bartlett decomposition $W=LL^\prime$ \citep[theorem~3.2.14]{Muirhead:1982} has independent rows $l_1,\ldots,l_n$, where $l_i=(L_{i1},\ldots,L_{ii},0,\ldots,0)$ with $L_{ij}\sim N(0,1)$ for $i>j$ and $L_{ii}^2\sim\chi^2_{\nu-i+1}$. This lower-triangular structure holds for all real $\nu>n-1$, not only integer $\nu$; in particular $l_1=(L_{11},0,\ldots,0)$ always, and $C_{ij}=l_i^\prime l_j/(\|l_i\|\|l_j\|)$.

The reduction to canonical index pairs is by distributional invariance, not by permuting a fixed Bartlett factor. Let $P$ be a permutation matrix. Since the $\operatorname{LKJ}(\eta)$ density $\propto\det(C)^{\eta-1}$ and the reference measure on $\mathcal{E}_n$ are both invariant under $C\mapsto PCP^\prime$, we have $PCP^\prime\overset{d}{=}C$; equivalently, $PCP^\prime$ is again the normalized Wishart of Proposition~\ref{prop:WishartGram}(i), because $PWP^\prime\sim W_n(\nu,I_n)$. Applying the Bartlett decomposition afresh to $PWP^\prime$ (rather than permuting the rows of $L$, which would destroy triangularity) yields a new triangular factor with independent rows, in terms of which the pair $(C_{ij},C_{kl})$ has been relabeled to a canonical pair. It therefore suffices to verify independence for the two canonical configurations below.

\textit{Disjoint indices} ($\{i,j\}\cap\{k,l\}=\emptyset$): $C_{ij}$ and $C_{kl}$ are measurable with respect to the disjoint collections $\{l_i,l_j\}$ and $\{l_k,l_l\}$; independence follows from the independence of the rows.

\textit{Shared index} (by relabeling, take $(\{1,2\},\{1,3\})$): since $l_1=(L_{11},0,\ldots,0)$, we have $l_1^\prime l_j=L_{11}L_{j1}$ and $\|l_1\|=L_{11}$, so $C_{1j}=L_{j1}/\|l_j\|$ for $j\geq2$. Hence $C_{12}=L_{21}/\|l_2\|$ is a function of $l_2$ alone, and $C_{13}=L_{31}/\|l_3\|$ is a function of $l_3$ alone; independence follows from the independence of $l_2$ and $l_3$.\hfill$\square$

\noindent{}{\bf Proof of Lemma~\ref{lem:betatail}.}
\textit{Step~1 (exact tail probability).} Since $(C_{12}+1)/2\sim\operatorname{Beta}(\tfrac{n}{2},\tfrac{n}{2})$ under the uniform law on $\mathcal{E}_n$, the marginal of $C_{12}$ has density
\begin{equation}\label{eq:margdens}
f_n(x)=Z_n(1-x^2)^{n/2-1}\quad\text{on }(-1,1),\qquad
Z_n=\frac{\Gamma(\tfrac{n+1}{2})}{\sqrt{\pi}\Gamma(\tfrac{n}{2})}\sim\sqrt{\tfrac{n}{2\pi}}.
\end{equation} Set $\tau=\tau_n(t)=\sqrt{(\mu_n+t)/(n+1)}$, so that $(n+1)C_{12}^2-\mu_n>t$ iff $|C_{12}|>\tau$, and note that $\tau^2=(\mu_n+t)/(n+1)=O(\log n/n)\to0$ while
\begin{equation}\label{eq:endpointrate}
n\tau_n^2\sim\mu_n\sim4\log n\rightarrow\infty.
\end{equation}
By symmetry,
$$
\Pr(|C_{12}|>\tau) = 2Z_n\int_\tau^1(1-x^2)^{n/2-1}dx.
$$
\textit{Step~2 (endpoint approximation, uniformly in the triangular array).} Both the endpoint $\tau_n$ and the local decay rate depend on $n$, so we give the endpoint expansion in the scaled variable
$$
h=\frac{1-\tau^2}{(n-2)\tau}u,\qquad u\in[0,U_n],\qquad U_n=\frac{(n-2)\tau}{1+\tau}\rightarrow\infty,
$$
where $x=\tau+h$ ranges over $[\tau,1]$ as $u$ ranges over $[0,U_n]$. Writing $1-x^2=(1-\tau^2)\bigl(1-(2\tau h+h^2)/(1-\tau^2)\bigr)$, the normalized integrand is
$$
g_n(u):=\frac{(1-x^2)^{n/2-1}}{(1-\tau^2)^{n/2-1}}
=\exp\left\{\tfrac{n-2}{2}\log\left(1-\frac{2u}{n-2}-\frac{h^2}{1-\tau^2}\right)\right\},\qquad 0\le u\le U_n,
$$
extended by $g_n(u)=0$ for $u>U_n$. Two facts complete the argument.

\emph{(a) Pointwise limit on bounded $u$-sets.} Since $h^2/(1-\tau^2)\leq u^2/[(n-2)^2\tau^2(1-\tau^2)]$ and, by (\ref{eq:endpointrate}), $(n-2)\tau^2\rightarrow\infty$, we have $\tfrac{n-2}{2}\cdot h^2/(1-\tau^2)=O(u^2/(n\tau^2))=o(1)$ for $u$ in bounded sets; the quadratic remainder of the logarithm contributes $O(u^2/n)=o(1)$ likewise. Hence $\tfrac{n-2}{2}\log(\cdot)=-u+o(1)$ and $g_n(u)\rightarrow e^{-u}$ for every $u\geq0$. This is precisely where the condition $n\tau_n^2\rightarrow\infty$ enters: it makes the quadratic term negligible on the endpoint scale $h=O((n\tau_n)^{-1})$.

\emph{(b) Integrable domination.} Because $x\mapsto\log(1-x^2)$ is concave, its tangent at $\tau$ is a global upper bound, $1-x^2\leq(1-\tau^2)\exp\{-2\tau(x-\tau)/(1-\tau^2)\}$ for $x\geq\tau$, whence
$$
g_n(u)\leq\exp\left\{-\frac{(n-2)\tau h}{1-\tau^2}\right\}=e^{-u}\qquad\text{for all }u\geq0\text{ and all }n.
$$
By dominated convergence, $\int_0^\infty g_n(u)du\rightarrow\int_0^\infty e^{-u}du=1$, and changing variables back,
$$
\int_\tau^1(1-x^2)^{n/2-1}dx=\frac{(1-\tau^2)^{n/2}}{(n-2)\tau}\int_0^\infty g_n(u)du\sim\frac{(1-\tau^2)^{n/2}}{(n-2)\tau}.
$$
\textit{Step~3 (substitution and limit).} Since $Z_n\sim\sqrt{n/(2\pi)}$ by Stirling's formula and $n-2\sim n$,
$$
\Pr(|C_{12}|>\tau) \sim \frac{2\sqrt{n/(2\pi)}(1-\tau^2)^{n/2}}{n\tau} = \frac{2(1-\tau^2)^{n/2}}{\sqrt{2\pi n}\tau}.
$$
Because $\tau^2\to0$, we have
$$
(1-\tau^2)^{n/2}=\exp\{\tfrac{n}{2}\log(1-\tau^2)\}=\exp\{-\tfrac{n\tau^2}{2}(1+O(\tau^2))\}\sim e^{-(\mu_n+t)/2},
$$
while $\sqrt{n}\tau=\sqrt{n(\mu_n+t)/(n+1)}\sim\sqrt{\mu_n+t}$. Therefore
$$
\Pr(|C_{12}|>\tau)\sim \frac{2e^{-(\mu_n+t)/2}}{\sqrt{2\pi(\mu_n+t)}},
$$
and
$$
d\cdot\Pr(|C_{12}|>\tau) \sim \frac{n(n-1)/2\cdot 2e^{-(\mu_n+t)/2}}{\sqrt{2\pi(\mu_n+t)}} \sim \frac{n^2e^{-(\mu_n+t)/2}}{\sqrt{2\pi(\mu_n+t)}}.
$$
Substituting $\mu_n=4\log n-\log\log n$: $e^{-\mu_n/2}=e^{-2\log n+(\log\log n)/2}=n^{-2}\sqrt{\log n}$ and $\sqrt{2\pi(\mu_n+t)}\sim\sqrt{8\pi\log n}$, so
$$
d\cdot\Pr\{(n+1)C_{12}^2-\mu_n>t\} \sim \frac{n^2\cdot n^{-2}\sqrt{\log n}\cdot e^{-t/2}}{\sqrt{8\pi\log n}} = \frac{e^{-t/2}}{\sqrt{8\pi}}.
$$
\textit{Step~4 (density and Borel sets).} The random variable $(n+1)C_{12}^2-\mu_n$ is supported on $[-\mu_n,n+1-\mu_n]$, and the pre-limit mean measure $B\mapsto d\cdot\Pr\{(n+1)C_{12}^2-\mu_n\in B\}$ has Lebesgue density
$$
h_n(t)=\frac{d\cdot Z_n\bigl(1-\tau_n(t)^2\bigr)^{n/2-1}}{\tau_n(t)(n+1)},\qquad \tau_n(t)=\sqrt{\tfrac{\mu_n+t}{n+1}},
$$
on that interval, obtained from (\ref{eq:margdens}) by the change of variables $t\mapsto\tau_n(t)$ and the symmetry of $f_n$. Applying the estimates of Step~3 with $d/(n+1)\sim n/2$ gives $h_n(t)\rightarrow\tfrac{1}{2\sqrt{8\pi}}e^{-t/2}=\lambda(t)$ pointwise.

For the domination, fix $t_0\in\mathbb{R}$ and let $t\in[t_0,n+1-\mu_n]$, so that $y:=\tau_n(t)^2=(\mu_n+t)/(n+1)\in(0,1)$. From $\log(1-y)\leq-y$,
$$
(1-y)^{n/2-1}\leq\exp\{-(\tfrac{n}{2}-1)y\}
=\exp\left\{-\frac{\mu_n+t}{2}\cdot\frac{n-2}{n+1}\right\}
=e^{-(\mu_n+t)/2}\exp\left\{\frac{3(\mu_n+t)}{2(n+1)}\right\}.
$$
On the support, $\mu_n+t\leq n+1$, so the last factor is at most $e^{3/2}$, uniformly in $n$ and in $t$. Because $\tau_n(t)^2$ can approach one near the upper endpoint of the support, no uniform positive lower bound on $1-\tau_n(t)^2$ is available; the preceding exponential bound remains valid throughout the support. Since $Z_n\sim\sqrt{n/(2\pi)}$, $d/(n+1)\sim n/2$, $e^{-\mu_n/2}=n^{-2}\sqrt{\log n}$, and $\tau_n(t)\geq\tau_n(t_0)\sim\sqrt{\mu_n/n}$ for $t\geq t_0$, we obtain
$$
h_n(t)\leq e^{3/2}\frac{dZ_ne^{-\mu_n/2}}{(n+1)\tau_n(t_0)}e^{-t/2}\leq K e^{-t/2},\qquad t\in[t_0,n+1-\mu_n],
$$
for a constant $K=K(t_0)$ that does not depend on $n$, because $\tfrac{n}{2}\cdot\sqrt{n/(2\pi)}\cdot n^{-2}\sqrt{\log n}\cdot\sqrt{n/\mu_n}=O(1)$ by $\mu_n\sim4\log n$.

For any Borel set $B$ with $t_0:=\inf B>-\infty$ and $\Lambda(\partial B)=0$, the domination $h_n\leq Ke^{-t/2}$ holds on all of $B$, and dominated convergence gives $d\cdot\Pr\{(n+1)C_{12}^2-\mu_n\in B\}=\int_B h_n(t)dt\rightarrow\int_B\lambda(t)dt=\Lambda(B)$. The restriction to sets bounded away from $-\infty$ is essential: the domination fails near the lower endpoint $-\mu_n$ of the support, where the pre-limit mean measure is unbounded.\hfill$\square$

\noindent{}{\bf Proof of Theorem~\ref{thm:PPP}.}
We apply the Chen--Stein method of \citet{ArratiaGoldsteinGordon:1989} with an explicit dependency graph. The key structural input is Lemma~\ref{lem:pairwise}: by exact pairwise independence, every joint-expectation term $\mathbb{E}[I_\alpha I_\beta]$ with $\alpha\neq\beta$ reduces to the product $p_\alpha p_\beta$; this does not make $b_2$ zero, but it makes $b_2\leq b_1=O(n^{-1})$.

\textit{Scalar counts.} For a Borel set $B\subseteq\mathbb{R}$ bounded away from $-\infty$, with $\Lambda(\partial B)=0$ and $\Lambda(B)<\infty$, write $\alpha=(i,j)$ for an edge, $I_\alpha=\mathbf{1}[(n+1)C_{ij}^2-\mu_n\in B]$, $p_\alpha=\mathbb{E}[I_\alpha]=p_n(B)$, and $N_n(B)=\sum_\alpha I_\alpha$. Set
$$
\lambda_n(B)=\mathbb{E}[N_n(B)]=dp_n(B),
$$
which is the \emph{pre-limit} Poisson mean; Lemma~\ref{lem:betatail} gives $\lambda_n(B)\rightarrow\Lambda(B)$. Following the convention of \citet{ArratiaGoldsteinGordon:1989}, the dependency neighborhood of $\alpha$ contains $\alpha$ itself,
$$
B_\alpha=\{\beta=(k,l):\{k,l\}\cap\{i,j\}\neq\emptyset\},\qquad |B_\alpha|=2n-3.
$$
Under the Gram representation of Proposition~\ref{prop:WishartGram}(ii) with $k=n+1$, we have $C_{ij}=v_i^\prime v_j$ with $v_1,\ldots,v_n$ independent; for $\beta=(k,l)\notin B_\alpha$ both $k,l\notin\{i,j\}$, so $I_\alpha$ and $I_\beta$ depend on the disjoint collections $\{v_i,v_j\}$ and $\{v_k,v_l\}$, giving $I_\alpha\perp\{I_\beta:\beta\notin B_\alpha\}$ and hence $b_3=0$. The two remaining Chen--Stein terms are
\begin{align*}
b_1 &= \sum_\alpha\sum_{\beta\in B_\alpha}p_\alpha p_\beta = d(2n-3)p_n(B)^2 =O(n^3p_n(B)^2),\\
b_2 &= \sum_\alpha\sum_{\beta\in B_\alpha,\beta\neq\alpha}\mathbb{E}[I_\alpha I_\beta].
\end{align*}
For $\beta\in B_\alpha$ with $\beta\neq\alpha$, Lemma~\ref{lem:pairwise} gives pairwise independence of $C_{ij}$ and $C_{kl}$ (even when the edges share a vertex), so $\mathbb{E}[I_\alpha I_\beta]=p_n(B)^2$; the diagonal term $\beta=\alpha$ is excluded from $b_2$ by definition and contributes only $dp_n(B)^2=O(n^{-2})$ to $b_1$. Hence $b_2\leq b_1$. Since $\lambda_n(B)\rightarrow\Lambda(B)<\infty$ implies $p_n(B)=O(n^{-2})$,
$$
b_1+b_2 \leq 2b_1 = O(n^3p_n(B)^2) = O(n^{-1})\rightarrow0.
$$
Theorem~1 of \citet{ArratiaGoldsteinGordon:1989} bounds the total variation distance, $d_{\mathrm{TV}}(P,Q)=\sup_A|P(A)-Q(A)|$, to a Poisson law with the \emph{pre-limit} mean,
$$
d_{\mathrm{TV}}\bigl(N_n(B),\operatorname{Poisson}(\lambda_n(B))\bigr)\leq2(b_1+b_2+b_3)=O(n^{-1})\rightarrow0.
$$
Convergence to $\operatorname{Poisson}(\Lambda(B))$ then follows from the continuity of the Poisson family in its mean, $d_{\mathrm{TV}}(\operatorname{Poisson}(\lambda),\operatorname{Poisson}(\lambda^\prime))\leq|\lambda-\lambda^\prime|$, together with $\lambda_n(B)\rightarrow\Lambda(B)$ and the triangle inequality.

\textit{Marked (multivariate) version.} For disjoint Borel sets $B_1,\ldots,B_m$ bounded away from $-\infty$ with $\Lambda(B_r)<\infty$ and $\Lambda(\partial B_r)=0$, index the indicators by pairs $(\alpha,r)$, where $\alpha$ is an edge and $r\in\{1,\ldots,m\}$ identifies the set $B_r$:
$$
I_{(\alpha,r)}=\mathbf{1}[(n+1)C_{ij}^2-\mu_n\in B_r],\qquad p_{(\alpha,r)}=p_n(B_r),
$$
so that $N_n(B_r)=\sum_\alpha I_{(\alpha,r)}$. Take the dependency neighborhood
$$
B_{(\alpha,r)}=\{(\beta,s):\beta\text{ shares a vertex with }\alpha,1\leq s\leq m\},\qquad |B_{(\alpha,r)}|=m(2n-3),
$$
which again contains $(\alpha,r)$ itself and again gives $b_3=0$ by the disjoint-vertex independence above. The joint expectations are elementary: if $\alpha=\beta$ and $r\neq s$, then $I_{(\alpha,r)}I_{(\alpha,s)}=0$ because $B_r\cap B_s=\emptyset$; if $\alpha\neq\beta$, Lemma~\ref{lem:pairwise} gives $\mathbb{E}[I_{(\alpha,r)}I_{(\beta,s)}]=p_n(B_r)p_n(B_s)$. Writing $p_n^{\max}=\max_r p_n(B_r)=O(n^{-2})$,
$$
b_1\leq dm^2(2n-3)(p_n^{\max})^2=O(n^{-1}),\qquad b_2\leq b_1,
$$
so Theorem~2 of \citet{ArratiaGoldsteinGordon:1989}, the process version of the Chen--Stein bound, gives
$$
d_{\mathrm{TV}}\Bigl(\bigl(N_n(B_1),\ldots,N_n(B_m)\bigr),{\textstyle\bigotimes_r}\operatorname{Poisson}(\lambda_n(B_r))\Bigr)\leq2(b_1+b_2)=O(n^{-1})\rightarrow0.
$$
Using $\lambda_n(B_r)\rightarrow\Lambda(B_r)$ for each $r$ and the continuity of the Poisson family in its mean once more,
$$
d_{\mathrm{TV}}\Bigl(\bigl(N_n(B_1),\ldots,N_n(B_m)\bigr),{\textstyle\bigotimes_r}\operatorname{Poisson}(\Lambda(B_r))\Bigr)\rightarrow0,
$$
so $(N_n(B_1),\ldots,N_n(B_m))$ converges jointly in distribution to independent $\operatorname{Poisson}(\Lambda(B_r))$ variables. For tightness, fix $t_0\in\mathbb{R}$; then $\mathbb{E}[\Xi_n(t_0,\infty)]=d\Pr((n+1)C_{12}^2-\mu_n>t_0)\to\Lambda(t_0,\infty)<\infty$, so the process is tight on each half-line $(t_0,\infty)$. Since the finite-dimensional distributions converge to those of $\operatorname{PPP}(\Lambda)$ and the process is locally tight, $\Xi_n\Rightarrow\operatorname{PPP}(\Lambda)$ \citep{Resnick:1987}.\hfill$\square$

\noindent{}{\bf Proof of Corollary~\ref{cor:ppp_consequences}.}
(a): Let $t_n=(n+1)\tau_n^2-\mu_n$, so that $N_n(\tau_n)=\Xi_n(t_n,\infty)$, and let $t_\lambda=-2\log(\lambda\sqrt{8\pi})$ be the unique solution of $\Lambda(t_\lambda,\infty)=\lambda$. We first show $t_n\to t_\lambda$. The map $t\mapsto d\Pr\{(n+1)C_{12}^2-\mu_n>t\}$ is nonincreasing and converges pointwise to the continuous, strictly decreasing limit $t\mapsto\Lambda(t,\infty)$ (Lemma~\ref{lem:betatail}); if $t_n\not\to t_\lambda$, then along a subsequence $t_n\geq t_\lambda+\varepsilon$ (or $\leq t_\lambda-\varepsilon$), and monotonicity gives $\limsup dp_n(\tau_n)\leq\Lambda(t_\lambda+\varepsilon,\infty)<\lambda$ (respectively $\liminf\geq\Lambda(t_\lambda-\varepsilon,\infty)>\lambda$), contradicting $dp_n(\tau_n)\to\lambda$. Given $t_n\to t_\lambda$, for any $\varepsilon>0$ eventually $\Xi_n(t_\lambda+\varepsilon,\infty)\leq N_n(\tau_n)\leq\Xi_n(t_\lambda-\varepsilon,\infty)$, and Theorem~\ref{thm:PPP} gives $\Xi_n(t_\lambda\pm\varepsilon,\infty)\overset{d}{\rightarrow}\operatorname{Poisson}(\Lambda(t_\lambda\pm\varepsilon,\infty))$. Letting $\varepsilon\downarrow0$ and using the continuity of $t\mapsto\Lambda(t,\infty)$ yields $N_n(\tau_n)\overset{d}{\rightarrow}\operatorname{Poisson}(\lambda)$. The sufficient condition in the statement gives $t_n=t_\lambda+o(1)$ directly; note that a relative error of order $o(1)$ in $\tau_n^2$ would shift $t_n$ by $o(\mu_n)=o(\log n)$, which is why the additive form is required.
(b): $\mathbb{E}[\#\text{triangles}]\leq\binom{n}{3}p_n(\tau_n)^2=O(n^3(\lambda/d)^2)=O(n^{-1})\rightarrow0$; apply Markov.
\hfill$\square$

\noindent{}{\bf Proof of Corollary~\ref{cor:gumbel}.}
$(n+1)M_n^2-\mu_n$ is the largest atom of $\Xi_n$, so for fixed $t\in\mathbb{R}$,
$$\Pr\{(n+1)M_n^2-\mu_n\leq t\}=\Pr\{\Xi_n(t,\infty)=0\}\rightarrow\exp\{-\Lambda(t,\infty)\}=\mathcal{G}(t),$$
by Theorem~\ref{thm:PPP}. The in-probability claim follows from $\mu_n=4\log n-\log\log n\sim4\log n$.\hfill$\square$

\noindent{}{\bf Proof of Theorem~\ref{thm:LKJscaling}.}
By Proposition~\ref{prop:WishartGram}(i), we may write $C_n=D^{-1}WD^{-1}$ with $W\sim W_n(\nu_n,I_n)$ and $D=\operatorname{diag}(\sigma_1,\ldots,\sigma_n)$, $\sigma_i^2=W_{ii}$. The Bartlett decomposition $W=LL^\prime$, with $L$ lower triangular, $L_{ij}\sim N(0,1)$ for $i>j$, and $L_{ii}^2\sim\chi^2_{\nu_n-i+1}$, holds for all real $\nu_n>n-1$ \citep[theorem~3.2.14]{Muirhead:1982}. Write $\|\cdot\|_F$ and $\|\cdot\|$ for the Frobenius and operator (spectral) norms, respectively.

(i) We prove the Marchenko--Pastur limit in three steps.

\textit{Step~1 (integer $\nu_n$).} When $\nu_n=k$ is an integer, Proposition~\ref{prop:WishartGram}(ii) identifies $C_n$ with the uncentered sample correlation matrix based on $T=k$ Gaussian observations; the projection argument used for Theorem~\ref{thm:MP} shows it equals the Pearson sample correlation matrix based on $T=k+1$ observations. Since $n/T\to c_\theta\in(0,1]$, the empirical spectral distribution of $C_n$ converges weakly, in probability, to the Marchenko--Pastur law $\mu_{\mathrm{MP}}(c_\theta)$ by \citet[theorem~2.2]{Heiny:2022}, which establishes this directly for sample correlation matrices via the diagonal comparison theorem. (The convergence in \citet{Heiny:2022} is almost sure along a fixed data array; since the law of $C_n$ changes with $n$ here, we use the in-probability form, which is all that part~(i) asserts; cf.\ Remark~\ref{rem:coupling}.)

\textit{Step~2 (coupling to an integer-degree Wishart).} For noninteger $\nu_n$, let $m_n=\lceil\nu_n\rceil$. Augment the Bartlett factor: let $G_i\overset{\mathrm{ind}}{\sim}\operatorname{Gamma}(\tfrac{m_n-\nu_n}{2},2)$ be independent of $L$, set $\tilde{L}_{ii}=\sqrt{L_{ii}^2+G_i}$ and $\tilde{L}_{ij}=L_{ij}$ for $i>j$, and define $\tilde{W}=\tilde{L}\tilde{L}^\prime\sim W_n(m_n,I_n)$. Writing $\Delta=\operatorname{diag}(\tilde{L}_{11}-L_{11},\ldots,\tilde{L}_{nn}-L_{nn})\geq0$ so that $\tilde{L}=L+\Delta$,
$$
\tilde{W}-W = \Delta L^\prime + L\Delta + \Delta^2.
$$
Since $(\sqrt{a+b}-\sqrt{a})^2\leq b$ for $a,b\geq0$, we have $\|\Delta\|_F^2\leq\sum_iG_i=O_p(n)$, because $\mathbb{E}\sum_iG_i=n(m_n-\nu_n)\leq n$. To bound $\|L\|$ without circularity, start from the integer-degree factor: $\|\tilde{L}\|^2=\|\tilde{L}\tilde{L}^\prime\|=\|\tilde{W}\|=O_p(m_n)$ by the standard largest-eigenvalue bound for integer Wishart matrices \citep[theorem~5.8]{BaiSilverstein:2010}, and $\|\Delta\|\leq\|\Delta\|_F=O_p(\sqrt{n})$, so the triangle inequality gives $\|L\|\leq\|\tilde{L}\|+\|\Delta\|=O_p(\sqrt{\nu_n})$ (using $m_n/\nu_n\to1$ and $n\leq\nu_n+1$). Then $\|\tilde{W}-W\|_F\leq2\|\Delta\|_F\|L\|+\|\Delta\|_F^2=O_p(\sqrt{n\nu_n})$, hence
$$
\frac{1}{n}\Bigl\|\frac{\tilde{W}-W}{\nu_n}\Bigr\|_F^2 = O_p\Bigl(\frac{1}{\nu_n}\Bigr)\to0.
$$
By the Hoffman--Wielandt inequality \citep[corollary~A.41]{BaiSilverstein:2010}, the cube of the L\'evy distance between the empirical spectral distributions of $W/\nu_n$ and $\tilde{W}/\nu_n$ is bounded by this quantity and tends to zero in probability. Since $n/m_n\to c_\theta$, the standard Marchenko--Pastur theorem for sample covariance matrices \citep[theorem~1.1]{BaiSilverstein:2010} gives that the empirical spectral distribution of $\tilde{W}/m_n$ converges weakly, in probability, to $\mu_{\mathrm{MP}}(c_\theta)$. Because $m_n/\nu_n\to1$, the same holds for $\tilde{W}/\nu_n$, and combined with the Hoffman--Wielandt bound above, the empirical spectral distribution of $W/\nu_n$ also converges to $\mu_{\mathrm{MP}}(c_\theta)$ in probability.

\textit{Step~3 (diagonal normalization).} It remains to pass from $W/\nu_n$ to $C_n=D^{-1}WD^{-1}$, where $D=\operatorname{diag}(\sigma_1,\ldots,\sigma_n)$ with $\sigma_i^2=W_{ii}\sim\chi^2_{\nu_n}$. Since $W_{11},\ldots,W_{nn}$ are independent $\chi^2_{\nu_n}$, the standard chi-squared tail bound gives $\Pr(|W_{ii}/\nu_n-1|>\varepsilon)\leq 2e^{-c\nu_n\varepsilon^2}$ for a universal constant $c>0$. A union bound over $i=1,\ldots,n$ then gives $\Pr(\max_i|D_{ii}^2/\nu_n-1|>\varepsilon)\leq 2ne^{-c\nu_n\varepsilon^2}$, which tends to zero whenever $\nu_n\gg\log n$; this holds throughout our parameter range since $\nu_n=n+2\eta_n-1\geq n-1$. Hence $\max_i|D_{ii}^2/\nu_n-1|\overset{p}{\to}0$ and $\max_i|D_{ii}/\sqrt{\nu_n}-1|\overset{p}{\to}0$. Write $D=\sqrt{\nu_n}(I_n+E_n)$ with $E_n=\operatorname{diag}(D_{11}/\sqrt{\nu_n}-1,\ldots,D_{nn}/\sqrt{\nu_n}-1)$, so $C_n=(I_n+E_n)^{-1}(W/\nu_n)(I_n+E_n)^{-1}$. For any $\varepsilon>0$, with probability tending to one $\|E_n\|_{\mathrm{op}}<\varepsilon$, meaning every diagonal entry of $I_n+E_n$ lies in $[1-\varepsilon,1+\varepsilon]$. We compare eigenvalues of $C_n$ and $M_n:=W/\nu_n$ via Courant--Fischer. Setting $y=(I_n+E_n)^{-1}x$ gives $x^\prime C_n x=y^\prime M_n y$, and as $x$ ranges over a $k$-dimensional subspace $S$, $y=(I_n+E_n)^{-1}x$ ranges over the $k$-dimensional subspace $T=(I_n+E_n)^{-1}S$. Since $(I_n+E_n)$ has operator norm in $[1-\varepsilon,1+\varepsilon]$,
$$
(1-\varepsilon)^2\|y\|^2\leq\|(I_n+E_n)y\|^2=\|x\|^2\leq(1+\varepsilon)^2\|y\|^2.
$$
By the Courant--Fischer max-min representation, for each $k$,
$$
\lambda_k(C_n)
= \max_{\dim S=k}\min_{x\in S\setminus\{0\}}\frac{y^\prime M_n y}{\|x\|^2}
\in\left[\frac{\lambda_k(M_n)}{(1+\varepsilon)^2},\frac{\lambda_k(M_n)}{(1-\varepsilon)^2}\right].
$$
Hence every ordered eigenvalue of $C_n$ is within a factor $(1\pm\varepsilon)^{\pm2}$ of the corresponding eigenvalue of $M_n$, so the empirical spectral distributions of $C_n$ and $W/\nu_n$ have the same weak limit. (This diagonal-normalization step is the content of the comparison theorem of \citet[theorem~2.1]{Heiny:2022} in a general covariance setting; our argument specializes to $\Sigma=I_n$.) Combining Steps~1--3 yields part~(i).

(ii) Write $\|C_n-I_n\|_F^2=2\sum_{i<j}C_{ij}^2$. The marginal distribution of $C_{ij}$ satisfies $(C_{ij}+1)/2\sim \operatorname{Beta}(\alpha,\alpha)$ with $\alpha=\eta_n+(n-2)/2$, such that $\mathbb{E}[C_{ij}^2]=1/\nu_n$ and $\mathbb{E}[C_{ij}^4]=3/(\nu_n(\nu_n+2))$, and the exact mean $\mathbb{E}\|C_n-I_n\|_F^2=2d/\nu_n=n(n-1)/\nu_n$ follows. For the concentration claim, note that the rows $l_1,\ldots,l_n$ of $L$ are independent, $C_{ij}=l_i^\prime l_j/(\|l_i\|\|l_j\|)$, and $l_1$ is almost surely proportional to the first coordinate vector, such that $C_{1j}=L_{j1}/\|l_j\|$ for $j\geq2$. Hence $C_{12},\ldots,C_{1n}$ are functions of the distinct independent rows $l_2,\ldots,l_n$ and are mutually independent; likewise $C_{12}$ and $C_{34}$ are independent. Because the density of $C_n$ is proportional to $\det(C)^{\eta_n-1}$ and therefore invariant to permutations of the variables, any two distinct pairs $\{i,j\}\neq\{k,l\}$ can be mapped by a relabeling to the configuration $(\{1,2\},\{1,3\})$ or $(\{1,2\},\{3,4\})$, so $\operatorname{cov}(C_{ij}^2,C_{kl}^2)=0$ whenever $\{i,j\}\neq\{k,l\}$. Hence
$$
\operatorname{var}\Big(\sum_{i<j}C_{ij}^2\Big)=\sum_{i<j}\operatorname{var}(C_{ij}^2)\leq d\mathbb{E}[C_{ij}^4]=\frac{3d}{\nu_n(\nu_n+2)},
$$
while the squared mean is $(d/\nu_n)^2$, so the variance-to-squared-mean ratio is at most $3/d\rightarrow0$, and Chebyshev's inequality gives $\|C_n-I_n\|_F^2/\mathbb{E}\|C_n-I_n\|_F^2\rightarrow1$ in probability. The stated consequences follow from $\mathbb{E}\|C_n-I_n\|_F^2=n(n-1)/\nu_n$.\hfill$\square$

\noindent{}{\bf Proof of Theorem~\ref{thm:nearest}.}
Since $\mathcal{E}_n\subset\{A\succeq0\}$, the distance from $\tilde{C}$ to $\mathcal{E}_n$ is at least the distance from $\tilde{C}$ to the positive semidefinite cone. The Frobenius projection onto the cone truncates negative eigenvalues, so the squared PSD distance is $\sum_{i=1}^n\min\{\lambda_i(\tilde{C}),0\}^2$, giving
$$
\|\tilde{C}-C^{*}\|_F^2\geq\sum_{i=1}^n\min\{1+\lambda_i(E),0\}^2=\sigma^2n\sum_{i=1}^n\min\{s_i+\tfrac{1}{\sigma\sqrt{n}},0\}^2,
$$
where $s_i=\lambda_i(E)/(\sigma\sqrt{n})$. The matrix $E$ is a Wigner matrix with independent and identically distributed bounded entries, so the empirical distribution of $s_1,\ldots,s_n$ converges weakly, almost surely, to the semicircle law $\rho(s)=\frac{1}{2\pi}\sqrt{4-s^2}$ on $[-2,2]$, and the extreme eigenvalues converge to the edges $\pm2$ \citep{BaiSilverstein:2010}. Because $\min\{s+\tfrac{1}{\sigma\sqrt{n}},0\}^2\rightarrow\min\{s,0\}^2$ uniformly on compact sets,
$$
\frac{1}{n}\sum_{i=1}^n\min\{s_i+\tfrac{1}{\sigma\sqrt{n}},0\}^2\rightarrow\int\min\{s,0\}^2\rho(s)ds=\frac{1}{2},
$$
almost surely, where the value of the integral follows from $\int s^2\rho(s)ds=1$ and the symmetry of $\rho$. Finally, $\|\tilde{C}-I_n\|_F^2=\sum_{i\neq j}E_{ij}^2=\sigma^2n^2(1+o_{a.s.}(1))$ by the law of large numbers, and the result follows.\hfill$\square$

\bibliographystyle{plainnat}
\bibliography{prh}

\clearpage
\setcounter{equation}{0}\renewcommand{\theequation}{S.\arabic{equation}}
\setcounter{figure}{0}\renewcommand{\thefigure}{S.\arabic{figure}}
\setcounter{table}{0}\renewcommand{\thetable}{S.\arabic{table}}
\setcounter{section}{0}\renewcommand{\thesection}{S.\arabic{section}}
\setcounter{page}{1}\renewcommand{\thepage}{S.\arabic{page}}
\setcounter{lemma}{0}\renewcommand{\thelemma}{S.\arabic{lemma}}
\setcounter{proposition}{0}\renewcommand{\theproposition}{S.\arabic{proposition}}

\section*{Supplementary Material}\label{sec:supp}
\setcounter{section}{0}

This supplement contains a comparison of the volume of $\mathcal{E}_n$ with that of the unit ball (Section~\ref{sec:unitball}), the proof of Proposition~\ref{prop:VolumeCn} including the Barnes $G$-function expansion (Section~\ref{sec:barnes}), the proofs of Propositions~\ref{prop:entrywise}, \ref{prop:C-I}, \ref{prop:WishartGram}, and \ref{prop:precision} and Theorem~\ref{thm:MP} (Section~\ref{sec:auxproofs}), exact closed-form expressions for $\operatorname{Vol}(\mathcal{E}_n)$ in selected dimensions up to $n=30$ together with two-step recursions and an exact log-volume recursion (Section~\ref{sec:exact}), an illustration of the point-process limit of Theorem~\ref{thm:PPP} (Section~\ref{sec:pppfig}), simulation evidence on the nearest-correlation repair cost of Theorem~\ref{thm:nearest} (Section~\ref{sec:ncmsim}), and a note on the centering in the log-determinant CLT of \citet{HaneaNane:2018} (Section~\ref{sec:centering}).

\section{Comparison with Volume of the Unit Ball}\label{sec:unitball}
The unit ball $\mathbb{B}_k$ in $\mathbb{R}^k$ has volume $\operatorname{Vol}(\mathbb{B}_k)=\pi^{k/2}/\Gamma(\tfrac{k}{2}+1)$. It follows that the volumes of the elliptope and the unit ball coincide for $\operatorname{Vol}(\mathcal{E}_2)=\operatorname{Vol}(\mathbb{B}_1)=2$ and $\operatorname{Vol}(\mathcal{E}_3)=\operatorname{Vol}(\mathbb{B}_4)=\pi^2/2$. We know of no other integer-dimensional coincidences.

There are two ways to compare the volume of elliptopes to that of unit balls. If we compare $\mathcal{E}_n$ to $\mathbb{B}_n$, we find that $\mathcal{E}_n$ decays much faster. While $\log \operatorname{Vol}(\mathbb{B}_n) = \frac{n}{2}\log\left(\frac{2\pi e}{n}\right) + O(n)$, its $O(-n \log n)$ decay rate is vastly outpaced by the dominant $-\frac{1}{4}n^2 \log n$ term of $\log \operatorname{Vol}(\mathcal{E}_n)$.

Conversely, we can compare $\mathcal{E}_{n}$ to $\mathbb{B}_{d}$, as both can be viewed as subsets of the hypercube $[-1,1]^d$ where $d=n(n-1)/2$. In this parameterization, $\mathbb{B}_{d}$ decays to zero much faster than $\operatorname{Vol}(\mathcal{E}_{n})$. This follows from
\begin{align*}
\log \operatorname{Vol}(\mathcal{E}_n) &= \frac{d}{4}\log\left(\tfrac{2\pi^{2}e}{d}\right) +O\big(\sqrt{d}\log d\big),\\
\log \operatorname{Vol}(\mathbb{B}_d) &= \frac{d}{2}\log\left(\tfrac{2\pi e}{d}\right)+O(d),
\end{align*}
such that for large $n$
$$
\log\frac{\operatorname{Vol}(\mathcal{E}_n)}{\operatorname{Vol}(\mathbb{B}_d)} = \frac{d}{4}\log d+O(d).
$$
Thus, $\operatorname{Vol}(\mathcal{E}_{n})$ exceeds $\operatorname{Vol}(\mathbb{B}_d)$ by the super-exponential factor $e^{\frac{d}{4}\log d+O(d)}=d^{d/4}e^{O(d)}$ as $n$ (and consequently $d=n(n-1)/2$) increases; the $O(d)$ error terms above do not resolve constants beyond the leading term.

This vast difference in volume is explained by their different shapes within the hypercube $[-1,1]^d$. The inscribed unit ball $\mathbb{B}_d$ is perfectly round; as $d$ increases, the distance from the center to the $2^d$ corners of the hypercube grows as $\sqrt{d}$, causing the ball's volume to recede entirely from the corners. In contrast, the elliptope $\mathcal{E}_n$ is highly irregular and ``spiky.'' Its extreme points include the $2^{n-1}$ rank-one cut matrices $C=xx^\prime$ with $x\in\{-1,1\}^n$, whose $\pm1$ entries place them exactly at vertices of the $[-1,1]^d$ hypercube. While $2^{n-1}$ is a vanishingly small fraction of the total $2^d$ corners, this structure allows the elliptope to stretch narrow tentacles outward into these specific vertices, capturing exponentially more volume than the perfectly round $\mathbb{B}_d$, which misses the hypercube corners entirely.

Both objects also exhibit boundary concentration, although in different metrics. If $X$ is uniformly distributed on $\mathbb{B}_d$, then its radial distance from the boundary satisfies
$$
d(1-\|X\|_2)\Rightarrow\operatorname{Exp}(1).
$$
For the elliptope, the corresponding boundary phenomenon is spectral rather than radial: under the uniform law on $\mathcal{E}_n$, the smallest eigenvalue satisfies
$$
n^2\lambda_{\min}(C_n)\Rightarrow\operatorname{Exp}(\text{mean }2),
$$
by Theorem~\ref{thm:specfloor} (see also Remark~\ref{rem:lambdamin}). Since $d\asymp n^2$, both boundary effects occur on an order-$d^{-1}$ scale, but they measure different notions of distance to the boundary, so the analogy is suggestive rather than exact.

The important distinction is shape. The unit ball is round: a typical point is close to the sphere, and all directions are equivalent. The elliptope is highly anisotropic. A typical draw satisfies $\|C_n-I_n\|_F\asymp\sqrt{n}$, whereas the rank-one cut matrices $C=xx^\prime$, $x\in\{-1,1\}^n$, lie at Frobenius distance $\sqrt{n(n-1)}\asymp n$ from the identity. Thus the elliptope has a substantial bulk near the identity on the entrywise scale, while simultaneously extending thin tentacles to selected corners of the hypercube.

\section{Proof of Proposition~\ref{prop:VolumeCn}}\label{sec:barnes}

This section proves Proposition~\ref{prop:VolumeCn}: first the recursion and the product formula (\ref{eq:VolCn}), then the asymptotic expansion (\ref{eq:VolCnAsym}) via the Barnes $G$-function. Throughout, $d=n(n-1)/2$.

In \citet[theorem~2.1]{JohnsonNaevdal:1998} it is shown that
$$
\operatorname{Vol}(\mathcal{E}_n)=2^d\prod_{j=2}^{n-1}(I_{j})^{j}=2\prod_{j=2}^{n-1}(2 I_{j})^{j},
$$
where
$$
I_{2k}=\tfrac{\pi}{2}\tfrac{1\cdot3\cdots(2k-1)}{2\cdot4\cdots(2k)},\qquad
I_{2k+1}=\tfrac{2\cdot4\cdots(2k)}{1\cdot3\cdots(2k+1)}.
$$

The expression in \citet{Joe:2006} is $\operatorname{Vol}(\mathcal{E}_n)=2^{\sum_{k=1}^{n-1}k^{2}}\prod_{k=1}^{n-1}\left[B\left(\tfrac{k+1}{2},\tfrac{k+1}{2}\right)\right]^{k}$, and the recursion in \citet{Joe:2006} is expressed as $\operatorname{Vol}(\mathcal{E}_n)=\operatorname{Vol}(\mathcal{E}_{n-1})2^{(n-1)(n-1)}\left[B\left(\tfrac{n}{2},\tfrac{n}{2}\right)\right]^{n-1}$.

\noindent{\bf Proof of Proposition~\ref{prop:VolumeCn}} 
Recall that
$\Gamma\left(m+\tfrac12\right)
   = \frac{(2m)!}{4^{m}m!}\sqrt{\pi}$,
and $\Gamma(\tfrac12)=\sqrt{\pi}$.
For $n=2k$ even, we have
$$
B\left(\tfrac{n+1}{2},\tfrac12\right)=B\left(k+\tfrac12,\tfrac12\right)
=\tfrac{\Gamma(k+\tfrac12)\Gamma(\tfrac12)}{\Gamma(k+1)}
 = \pi\tfrac{1 3 \cdots  (2k-1)}{2\cdot 4 \cdots (2k)}
 = 2I_{2k}.
$$
and for $n=2k+1$ odd,
$$
 B\left(\tfrac{n+1}{2},\tfrac12\right)
 =B\left(k+1,\tfrac12\right)
  = \tfrac{\Gamma(k+1)\Gamma(\tfrac12)}{\Gamma(k+\tfrac32)} = 2\tfrac{2\cdot 4\cdots(2k)}{3\cdot 5\cdots(2k+1)}
 = 2I_{2k+1}.
$$
The recursive formula, 
$\operatorname{Vol}(\mathcal{E}_{n+1})=\operatorname{Vol}(\mathcal{E}_n)\left[B(\tfrac{n+1}{2},\tfrac{1}{2})\right]^n,
$ now follows, with $\operatorname{Vol}(\mathcal{E}_1)=1$ by definition ($\mathcal{E}_1=\{1\}$), so the recursion at $n=1$ gives $\operatorname{Vol}(\mathcal{E}_2)=B(1,\tfrac{1}{2})=2.$ Moreover, (\ref{eq:VolCn}) now follows by
$$
\operatorname{Vol}(\mathcal{E}_{n})=\prod_{k=1}^{n-1}
\left[B(\tfrac{k+1}{2},\tfrac{1}{2})\right]^k
= \prod_{k=1}^{n-1}\left(\tfrac{\Gamma(\tfrac{k+1}{2})\Gamma(\tfrac{1}{2})}{\Gamma(\tfrac{k+2}{2})}
\right)^k=\pi^{\frac{n(n-1)}{4}}\frac{\Gamma(\tfrac{1+1}{2})\cdots\Gamma(\tfrac{n}{2})}{\Gamma(\tfrac{n+1}{2})^{n-1}}.
$$

It remains to derive the asymptotic expansion (\ref{eq:VolCnAsym}) from this product formula, by writing $\prod_{k=1}^{n}\Gamma(\tfrac{k+1}{2})$ in terms of the Barnes $G$-function and applying the standard asymptotic expansions of $\log G$ and $\log\Gamma$.

The Barnes $G$ function, $G(z+1)=\Gamma(z)G(z)$ is a generalization of the superfactorial, $G(n+2)=1!2!\cdots n!$, for $n\in\mathbb{N}$, to the complex domain. Since 
$G(1)=1$ we have 
$$\prod_{k=1}^{n}\Gamma\big(\tfrac{k+1}{2}\big)
=\prod_{k=1}^{p}\Gamma(k) \times \prod_{k=1}^{q}\Gamma\left(k+\tfrac{1}{2}\right)
=G(p+1)\times \frac{G(q+\tfrac{3}{2})}{G(\tfrac{3}{2})},$$
where $p=\lfloor (n+1)/2\rfloor$ and $q=\lfloor n/2\rfloor$.
From $\operatorname{Vol}(\mathcal{E}_n)=\pi^{\frac{n(n-1)}{4}}\frac{\prod_{k=1}^{n}\Gamma\big(\tfrac{k+1}{2}\big)}{\Gamma\left(\frac{n+1}{2}\right)^{n}}$
we have
$$
\log \operatorname{Vol}(\mathcal{E}_n)=\tfrac{n(n-1)}{2}\log\sqrt{\pi}+\log G(p+1)+\log G\left(q+\tfrac{3}{2}\right)-\log G\left(\tfrac{3}{2}\right)-n\log\Gamma\left(\tfrac{n+1}{2}\right).
$$
From the asymptotic expansions of the logarithmic Barnes $G$ function and the logarithmic Gamma function, we have as $z\to\infty$,
\begin{eqnarray*}
    \log G(z+1)&=&\tfrac{1}{2}z^{2}\log z-\tfrac{3}{4}z^{2}+\tfrac{1}{2}z\log(2\pi)+\varsigma\log z+\varsigma^\prime+O\big(z^{-2}\big),
\\
\log\Gamma(z)&=&\big(z-\tfrac{1}{2}\big)\log z-z+\tfrac{1}{2}\log(2\pi)-\varsigma\tfrac{1}{z}+O\big(z^{-3}\big),
\end{eqnarray*}
where 
$\varsigma=\zeta(-1)=-\frac{1}{12}$
and
$\varsigma^\prime=\zeta^\prime(-1)\approx -0.1654$.

Consider the case where $n=2m$ is even. Then $p=q=m$ and
\begin{eqnarray*}
    \log G(m+1)
    &=&\tfrac{1}{2}m^{2}\log m-\tfrac{3}{4}m^{2}
+\tfrac{1}{2}m\log(2\pi)
+\varsigma\log m+\varsigma'
+O\big(m^{-2}\big)\\
\log G(m+\tfrac{3}{2})
&=&\tfrac{1}{2}(m+\tfrac{1}{2})^{2}\log(m+\tfrac{1}{2})
-\tfrac{3(m+\tfrac{1}{2})^{2}}{4}
+\tfrac{m+\tfrac{1}{2}}{2}\log(2\pi)\\
&&+\varsigma\log(m+\tfrac{1}{2})
+\varsigma^{\prime}
+O(m^{-2})\\
-2m\log\Gamma(m+\tfrac{1}{2})
&=&-2m^{2}\log(m+\tfrac{1}{2})+2m^{2}-m(\log(2\pi)-1)+\frac{4m\varsigma}{2m+1}
+O(m^{-2}).
\end{eqnarray*}
First, we consider the $m^{2}\log m$ terms and use
$$
\log(m+\tfrac{1}{2})=\log(m)+\tfrac{1}{m}\tfrac{1}{2}-\tfrac{1}{m^2}\tfrac{1}{8}+\tfrac{1}{m^3}\tfrac{1}{24}+O(m^{-4}),$$
which shows that
\begin{eqnarray*}
A_{2\log} &=&     
\tfrac{1}{2}m^{2}\log m+\tfrac{1}{2}(m+\tfrac{1}{2})^{2}\log(m+\tfrac{1}{2})-2m^{2}\log(m+\tfrac{1}{2})\\
&=&-\tfrac{1}{4}(2m^2-m)\log(m^2) +\tfrac{1}{8}\log(m)-\tfrac{3}{4}m+\tfrac{7}{16} +O(m^{-1})\\
&=& -\tfrac{1}{4}(2m^2-m)\log(m^2-\tfrac{1}{2}m) +\tfrac{1}{8}\log(m)- m+\tfrac{1}{2} +O(m^{-1})\\
&=& -\tfrac{d}{4}\log(\tfrac{d}{2})+\tfrac{1}{16}\log(m^2-\tfrac{1}{2}m)-m+\tfrac{1}{2} +O(m^{-1})\\
&=& -\tfrac{d}{4}\log(\tfrac{d}{2})+\tfrac{1}{16}\log(\tfrac{d}{2})-m+\tfrac{1}{2} +O(m^{-1}).
\end{eqnarray*}
Second, we consider the $m^2$ terms.
$$
A_{2} = -\tfrac{3}{4}m^{2}-\tfrac{3}{4}(m+\tfrac{1}{2})^{2}+2m^2
= \tfrac{1}{4}(2m^{2}-m)-\tfrac{1}{2}m -\tfrac{3}{16}=\tfrac{d}{4}-\tfrac{1}{2}m -\tfrac{3}{16}.
$$
Third, the $m$ terms, plus the remainders from $A_{2\log}$ and $A_2$
\begin{eqnarray*}
A_{1} &=& \tfrac{m}{2}\log(2\pi)+
(\tfrac{m}{2}+\tfrac{1}{4})\log(2\pi)
-m(\log(2\pi)-1)
-m+\tfrac{1}{2} -\tfrac{1}{2}m -\tfrac{3}{16}\\
&=& \tfrac{1}{4}\log(2\pi)-\tfrac{1}{2}m+\tfrac{5}{16}\\
&=& \tfrac{1}{4}\log(2\pi)-\tfrac{\sqrt{2d}}{4}+\tfrac{3}{16}+O(d^{-1/2})
\end{eqnarray*}
where we used 
$m=\tfrac{\sqrt{2d}}{2}+\tfrac{1}{4}+O(d^{-1/2}).$

Fourth, $\log(m)$ terms
\begin{eqnarray*}
A_{\log} &=& \varsigma\log(m)+\varsigma\log(m+\tfrac{1}{2})\\
&=& 2\varsigma\log(m)+O(m^{-1})=\varsigma\log(m^2-\tfrac{1}{2})+O(m^{-1})\\
&=&\varsigma\log(\tfrac{d}{2})+O(d^{-1/2}).
\end{eqnarray*}
Finally, the constant terms
$$
    A_0 = \varsigma^\prime+\varsigma^\prime +\frac{4m\varsigma}{2m+1}
    = 2(\varsigma^\prime+\varsigma)
+O(m^{-1}).
$$
Combined all terms, including $n(n-1)/2\log\sqrt\pi=\tfrac{d}{4}\log\pi^2$ and $-\log G(3/2)=\tfrac{\varsigma\log 2}{2}-\tfrac{\log\pi}{4}-\tfrac{3}{2}\varsigma^\prime$, we have
\begin{eqnarray*}
\log \operatorname{Vol}(\mathcal{E}_n) &=&\tfrac{d}{4}\log\pi^2-\tfrac{d}{4}\log(\tfrac{d}{2})+\tfrac{1}{16}\log(\tfrac{d}{2})
+\tfrac{d}{4}+\tfrac{\log(2\pi)}{4}-\tfrac{\sqrt{2d}}{4}+\tfrac{3}{16}+\varsigma\log(\tfrac{d}{2})\\
&& +2(\varsigma^\prime+\varsigma)-\tfrac{\log 2}{24}-\tfrac{\log\pi}{4}-\tfrac{3}{2}\varsigma^\prime
+O(d^{-1/2})\\
&=& \tfrac{d}{4}\log(\tfrac{2e\pi^2}{d})-\tfrac{\sqrt{2d}}{4}+(\varsigma+\tfrac{1}{16})\log(\tfrac{d}{2})-\tfrac{\log 2}{24}+\tfrac{\log(2)}{4}+\tfrac{3}{16}+\tfrac{1}{2}\varsigma^\prime+2\varsigma
+O(d^{-1/2})\\
&=& -\tfrac{d}{4}\log(\tfrac{d}{2e\pi^2})-\tfrac{\sqrt{d}}{2\sqrt{2}}-\tfrac{\log d}{48}+\tfrac{\log 2}{48}-\tfrac{\log 2}{24}+\tfrac{\log(2)}{4}+\tfrac{9}{48}+\tfrac{1}{2}\varsigma^\prime-\tfrac{8}{48}
+O(d^{-1/2})\\
&=& -\tfrac{d}{4}\log(\tfrac{d}{2e\pi^2})-\tfrac{1}{2\sqrt{2}}\sqrt{d}-\tfrac{1}{48}\log d +\tfrac{(1+\varsigma)\log 2-\varsigma}{4}
+\tfrac{1}{2}\varsigma^\prime
+O(d^{-1/2}),
\end{eqnarray*}
where the leading term, $-\frac{d}{4}\log d$, can be expressed as $-\frac{n^2}{4}\log n$, using $d\sim n^2/2$.

It remains to extend the expansion from even to odd $n$. Let $F(d)$ denote the right-hand side of (\ref{eq:VolCnAsym}) without the remainder term, and let $d_n=n(n-1)/2$, such that $\log\operatorname{Vol}(\mathcal{E}_n)=F(d_n)+O(d_n^{-1/2})$ for $n$ even. The recursion in Proposition~\ref{prop:VolumeCn} gives $\log\operatorname{Vol}(\mathcal{E}_{n+1})=\log\operatorname{Vol}(\mathcal{E}_n)+n\log B(\tfrac{n+1}{2},\tfrac{1}{2})$, where $d_{n+1}=d_n+n$. From $\Gamma(z+\tfrac{1}{2})/\Gamma(z)=z^{1/2}(1-\tfrac{1}{8z}+O(z^{-2}))$ with $z=\tfrac{n+1}{2}$ we obtain
$$
n\log B(\tfrac{n+1}{2},\tfrac{1}{2})=\tfrac{n}{2}\log(2\pi)-\tfrac{n}{2}\log(n+1)+\tfrac{1}{4}+O(n^{-1}),
$$
while a direct expansion of $F$ yields
$$
F(d_n+n)-F(d_n)=\tfrac{n}{2}\log(2\pi)-\tfrac{n}{4}\log n-\tfrac{n}{4}\log(n+1)+O(n^{-1}).
$$
The difference between the two right-hand sides is $\tfrac{n}{4}\log\tfrac{n+1}{n}-\tfrac{1}{4}+O(\tfrac{1}{n})=O(\tfrac{1}{n})$, so $\log\operatorname{Vol}(\mathcal{E}_{n+1})=F(d_{n+1})+O(d_{n+1}^{-1/2})$, and the expansion (\ref{eq:VolCnAsym}) holds for all $n$.
\hfill$\square$

\section{Proofs of Auxiliary Results}\label{sec:auxproofs}

\noindent{}{\bf Proof of Proposition~\ref{prop:entrywise}.}
Under the uniform law, $\operatorname{Vol}(\mathcal{E}_n(r))/\operatorname{Vol}(\mathcal{E}_n)=\Pr(\max_{i<j}|C_{ij}|\leq r)$, so $1-\operatorname{Vol}(\mathcal{E}_n(r))/\operatorname{Vol}(\mathcal{E}_n)=\Pr(\max_{i<j}|C_{ij}|>r)$. Writing $p_n(r)=\Pr(|C_{12}|>r)$ and $d=\binom{n}{2}$, the single-event lower bound and union bound give
$$p_n(r)\leq\Pr\left(\max_{i<j}|C_{ij}|>r\right)\leq dp_n(r),$$
so it suffices to show $\log p_n(r)=\tfrac{n}{2}\log(1-r^2)+O(\log n)$.

Since $(C_{12}+1)/2\sim\operatorname{Beta}(n/2,n/2)$, the density of $C_{12}$ on $(-1,1)$ is
$$
f_n(x)=Z_n(1-x^2)^{n/2-1},\qquad
Z_n=\frac{\Gamma(\tfrac{n+1}{2})}{\sqrt{\pi}\Gamma(\tfrac{n}{2})}=\frac{2^{1-n}}{B(\tfrac{n}{2},\tfrac{n}{2})},
$$
where the second expression for $Z_n$ follows from the Legendre duplication formula; note that $Z_n$, and not $B(\tfrac{n}{2},\tfrac{n}{2})^{-1}$, is the relevant normalizing constant, the two differing by the factor $2^{n-1}$. By symmetry of $f_n$,
$$p_n(r) = 2Z_n\int_r^1(1-x^2)^{n/2-1}dx.$$
By Stirling, $\Gamma(\tfrac{n+1}{2})/\Gamma(\tfrac{n}{2})\sim\sqrt{n/2}$, so $Z_n\sim\sqrt{n/(2\pi)}$ and $\log Z_n=\tfrac{1}{2}\log n+O(1)$. It remains to show that the core integral satisfies $\log\int_r^1(1-x^2)^{n/2-1}dx=\tfrac{n}{2}\log(1-r^2)+O(\log n)$.

\textit{Upper bound.} Since $x\mapsto\log(1-x^2)$ is strictly concave, its tangent at $r$ is a global upper bound: $(1-x^2)\leq(1-r^2)e^{-2r(x-r)/(1-r^2)}$ for all $x\geq r$. Hence
$$\int_r^1(1-x^2)^{n/2-1}dx\leq(1-r^2)^{n/2-1}\int_0^\infty e^{-(n-2)rt/(1-r^2)}dt=\frac{(1-r^2)^{n/2}}{(n-2)r}.$$

\textit{Lower bound.} Set $\ell=(1-r^2)/[(n-2)r]$, the reciprocal of the exponential-decay rate appearing in the upper bound. Since $(1-x^2)^{n/2-1}$ is decreasing, restricting to $[r,r+\ell]$ gives
$$\int_r^1(1-x^2)^{n/2-1}dx\geq\ell\cdot(1-(r+\ell)^2)^{n/2-1}.$$
Since $2r\ell/(1-r^2)=2/(n-2)$ and $\ell^2=O(n^{-2})$ for fixed $r\in(0,1)$,
$$
\frac{1-(r+\ell)^2}{1-r^2}
=1-\frac{2r\ell+\ell^2}{1-r^2}
=1-\frac{2}{n-2}+O(n^{-2}),
$$
and taking logarithms gives $(\tfrac{n}{2}-1)\log\bigl(1-\tfrac{2}{n-2}+O(n^{-2})\bigr)=-1+O(n^{-1})$. Hence
$$\left(\frac{1-(r+\ell)^2}{1-r^2}\right)^{n/2-1}=e^{-1}(1+o(1)),$$
so that $(1-(r+\ell)^2)^{n/2-1}\geq\frac{1}{2e}(1-r^2)^{n/2-1}$ for all large $n$, giving
$$\int_r^1(1-x^2)^{n/2-1}dx\geq\frac{(1-r^2)^{n/2}}{2e(n-2)r}.$$

Combining the upper and lower bounds and using $\log((n-2)r)=\log n+O(1)$,
$$\log\int_r^1(1-x^2)^{n/2-1}dx=\frac{n}{2}\log(1-r^2)+O(\log n).$$
Adding $\log(2Z_n)=\tfrac{1}{2}\log n+O(1)=O(\log n)$ and substituting into the two-sided bound above with $\log d=2\log n+O(1)$ gives $\log\Pr(\max_{i<j}|C_{ij}|>r)=\tfrac{n}{2}\log(1-r^2)+O(\log n)=-\tfrac{n}{2}|\log(1-r^2)|+O(\log n)$, completing the proof.\hfill$\square$

\noindent{}{\bf Proof of Proposition~\ref{prop:C-I}.}
By definition of the Frobenius norm, $\mathbb{E}\|C-I_n\|_F^2=2\sum_{i<j}\mathbb{E}[C_{ij}^2]$. The marginal distribution of $C_{ij}$ satisfies $(C_{ij}+1)/2\sim \operatorname{Beta}(\tfrac{n}{2},\tfrac{n}{2})$, giving mean $0$ and variance $\tfrac{1}{n+1}$, so $\mathbb{E}[C_{ij}^2]=\tfrac{1}{n+1}$ and $\mathbb{E}\|C-I_n\|_F^2=2d/(n+1)=n(n-1)/(n+1)$. For concentration, write $\|C-I_n\|_F^2=2\sum_{i<j}C_{ij}^2$. By Lemma~\ref{lem:pairwise}, any two distinct off-diagonal entries of $C$ are independent, so $\operatorname{cov}(C_{ij}^2,C_{kl}^2)=0$ for all $\{i,j\}\neq\{k,l\}$. Since $|C_{ij}|\leq1$ almost surely, $\operatorname{var}(C_{ij}^2)\leq\mathbb{E}[C_{ij}^4]\leq\mathbb{E}[C_{ij}^2]=\tfrac{1}{n+1}$, and therefore
$$\operatorname{var}\left(\sum_{i<j}C_{ij}^2\right)=\sum_{i<j}\operatorname{var}(C_{ij}^2)\leq\frac{d}{n+1}.$$
The variance-to-squared-mean ratio is at most $(n+1)/d\rightarrow0$, so Chebyshev's inequality gives $\|C-I_n\|_F^2/\mathbb{E}\|C-I_n\|_F^2\rightarrow1$ in probability.\hfill$\square$

\noindent{}{\bf Proof of Proposition~\ref{prop:WishartGram}.}
We give the short density calculation for completeness.
(i) For real $\nu>n-1$, the Wishart distribution $W_n(\nu,I_n)$ has density proportional to $\det(W)^{(\nu-n-1)/2}e^{-\operatorname{tr}(W)/2}$ on the positive definite matrices for all real $\nu>n-1$, see \citet[definition~3.2.1]{Muirhead:1982}. Write $s=(s_1,\ldots,s_n)$ for the diagonal of $W$, so that $D=\operatorname{diag}(s_1^{1/2},\ldots,s_n^{1/2})$. The change of variables $W\mapsto(s,C)$, where $s_i=W_{ii}>0$ and $W_{ij}=\sqrt{s_i s_j}C_{ij}$ for $i<j$, has Jacobian $\prod_{i=1}^n s_i^{(n-1)/2}$. Using $\det(W)=\det(C)\prod_{i=1}^n s_i$, the joint density of $(s,C)$ is proportional to
$$
\det(C)^{(\nu-n-1)/2}\times\prod_{i=1}^n s_i^{(\nu-2)/2}e^{-s_i/2},
$$
which factorizes. Hence $C$ is independent of the diagonal scale $s$ with density proportional to $\det(C)^{(\nu-n-1)/2}$ on $\mathcal{E}_n$, which is the LKJ$(\eta)$ density with $\nu=n+2\eta-1$. The exponent vanishes if and only if $\nu=n+1$, and because $\det(C)$ is not constant on $\mathcal{E}_n$ (it equals one at $I_n$ and vanishes at the boundary), no other value of $\nu$ yields a uniform density.

(ii) For integer $\nu=k$, $W$ is distributed as $Z^\prime Z$, where $Z=(z_1,\ldots,z_n)$ is a $k\times n$ matrix with independent $N(0,1)$ entries. In this case $C=V^\prime V$ with $v_i=z_i/\|z_i\|$, and $v_1,\ldots,v_n$ are independent and uniformly distributed on the unit sphere in $\mathbb{R}^k$. Part (ii) is standard; \citet{Joe:2006} gives this Gram construction explicitly in the random-correlation context, and \citet{LewandowskiKurowickaJoe:2009} use the same spherical representation.\hfill$\square$

\noindent{}{\bf Proof of Theorem~\ref{thm:MP}.}
By Proposition~\ref{prop:WishartGram} and the centered representation stated after it, $C_n$ is distributed exactly as the Pearson sample correlation matrix of $n$ independent Gaussian variables computed from $T=n+2$ observations, so $n/T=n/(n+2)\rightarrow1$. The Marchenko--Pastur limit for sample correlation matrices at aspect ratio $n/T\rightarrow c\in(0,1]$ is established by \citet[theorem~1.2]{Jiang:2004}; a direct proof via the diagonal comparison theorem (replacing the stochastic diagonal $\operatorname{diag}(S)$ with its deterministic counterpart) is given by \citet[theorem~2.2]{Heiny:2022}, which states almost-sure weak convergence and rests on the almost-sure diagonal approximation of \citet[theorem~1.2]{Heiny:2022}. Finite fourth-moment conditions are satisfied here. Because the law of $C_n$ changes with $n$, the theorem asserts weak convergence in probability, which follows from the almost-sure statement for any fixed data array; the almost-sure form is meaningful for the sequence $(C_n)$ only under a joint construction such as the coupling of Remark~\ref{rem:coupling}. Extreme-eigenvalue results are in \citet{HeinyMikosch:2018}.\hfill$\square$

\noindent{}{\bf Proof of Proposition~\ref{prop:precision}.}
(i) For $M>0$ set $g_M(x)=\min(x^{-1},M)$ for $x>0$ and $g_M(0)=M$; this is bounded and continuous on $[0,\infty)$. Since $f_{\mathrm{MP}}(x)\sim\pi^{-1}x^{-1/2}$ near zero, $\int x^{-1}dF_{\mathrm{MP}}(x)=\infty$, and by monotone convergence $\int g_MdF_{\mathrm{MP}}\uparrow\infty$ as $M\to\infty$.

Fix any $K>0$ and choose $M$ so that $\int g_MdF_{\mathrm{MP}}>2K$. Since $F_n$ converges weakly to $F_{\mathrm{MP}}$ in probability (Theorem~\ref{thm:MP}) and $g_M$ is bounded and continuous, $\int g_MdF_n\overset{p}{\to}\int g_MdF_{\mathrm{MP}}>2K$, so $\Pr(\int g_MdF_n>K)\to1$. Because $\tfrac{1}{n}\operatorname{tr}(C_n^{-1})=\int x^{-1}dF_n(x)\geq\int g_MdF_n$, it follows that $\Pr(\tfrac{1}{n}\operatorname{tr}(C_n^{-1})>K)\to1$. Since $K>0$ was arbitrary, $\tfrac{1}{n}\operatorname{tr}(C_n^{-1})\to\infty$ in probability.

(ii) By \citet[theorem~1.2]{Jiang:2004}, the empirical spectral distribution $F_n$ of $\hat{C}_n$ converges weakly to $F_{\mathrm{MP},c}$ almost surely. For $c<1$ the Marchenko--Pastur support is $[(1-\sqrt{c})^2,(1+\sqrt{c})^2]$ with $(1-\sqrt{c})^2>0$. The extreme eigenvalues of $\hat{C}_n$ converge almost surely to these edges: the companion covariance matrix $\hat{S}_n$ satisfies $\lambda_{\min}(\hat{S}_n)\to(1-\sqrt{c})^2$ and $\lambda_{\max}(\hat{S}_n)\to(1+\sqrt{c})^2$ a.s.\ by \citet[theorem~5.11]{BaiSilverstein:2010}. Moreover, $\hat{C}_n=\hat{D}^{-1}\hat{S}_n\hat{D}^{-1}$, where $\hat{D}=\operatorname{diag}(\hat{\sigma}_1,\ldots,\hat{\sigma}_n)$ with $\hat{\sigma}_i^2=\hat{S}_{ii}$. Each $\hat{S}_{ii}$ is a normalized chi-squared variable with $T$ (or $T-1$, after centering) degrees of freedom, so the standard chi-squared tail bound and a union bound give $\Pr(\max_i|\hat{S}_{ii}-1|>\varepsilon)\leq2ne^{-c_0T\varepsilon^2}$ for a universal constant $c_0>0$, which is summable in $n$ because $T\asymp n$; by Borel--Cantelli, $\max_i|\hat{D}_{ii}-1|\to0$ a.s., so the eigenvalues of $\hat{C}_n$ and $\hat{S}_n$ differ by factors tending to one (Courant--Fischer, as in Step~3 of the proof of Theorem~\ref{thm:LKJscaling}), so $\lambda_{\min}(\hat{C}_n)\to(1-\sqrt{c})^2>0$ a.s.\ Because the spectrum of $\hat{C}_n$ is eventually bounded away from zero, $x\mapsto x^{-1}$ is eventually bounded and continuous on the support, and weak convergence of $F_n$ gives $\tfrac{1}{n}\operatorname{tr}(\hat{C}_n^{-1})=\int x^{-1}dF_n(x)\to\int x^{-1}dF_{\mathrm{MP},c}(x)=\tfrac{1}{1-c}$ a.s., where the inverse moment formula is \citet[equation~3.12]{BaiSilverstein:2010}.\hfill$\square$

\section{Exact Volumes and Recursions}\label{sec:exact}

\begin{figure}[htbp!]
\centering{}\includegraphics[width=0.48\textwidth]{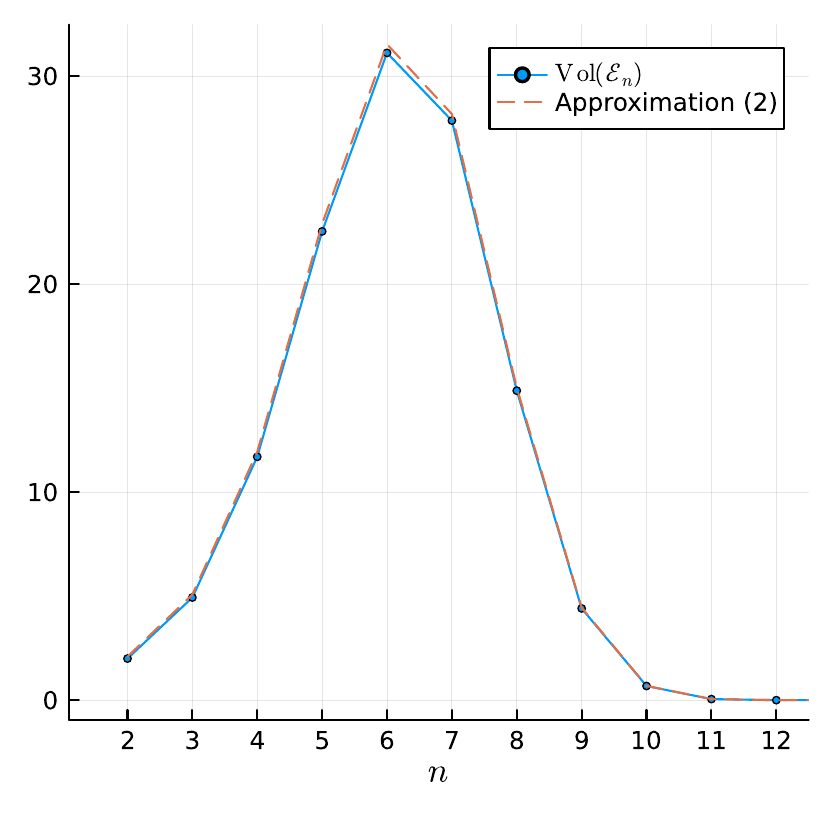}\includegraphics[width=0.48\textwidth]{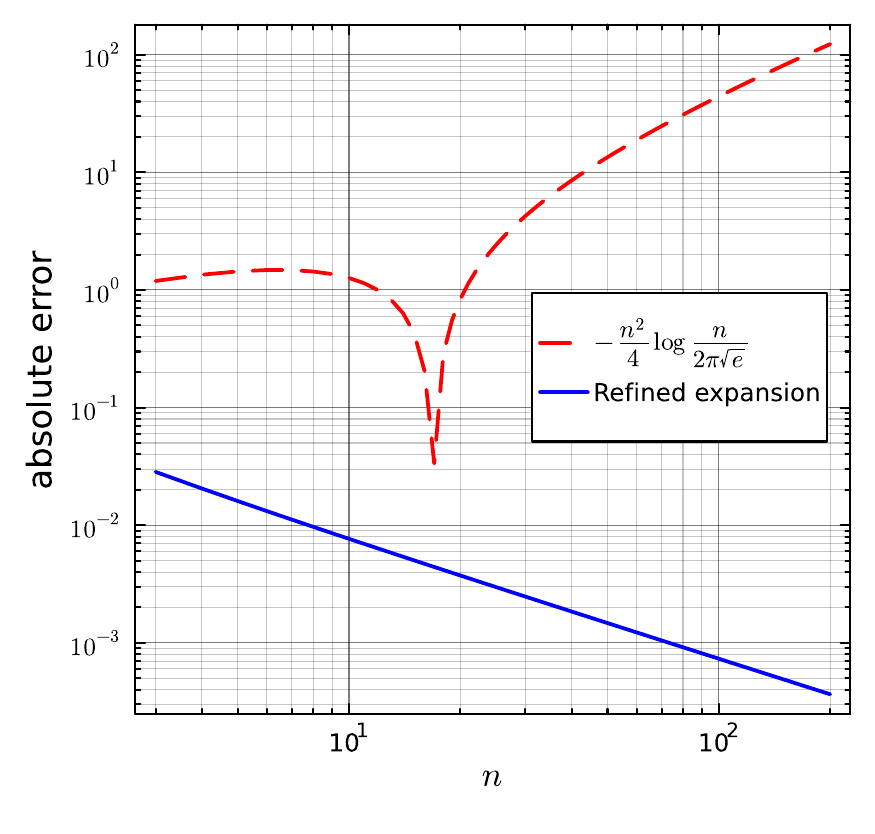}
\caption{{\small{}\textit{Left:} the volume of the elliptope $\mathcal{E}_n$ for small dimensions, exact and as approximated by (\ref{eq:VolCnAsym}). \textit{Right:} the absolute approximation error for $\log\operatorname{Vol}(\mathcal{E}_n)$ on a log-log scale, for the leading-order expression $-\tfrac{n^2}{4}\log\tfrac{n}{2\pi\sqrt{e}}$ and for the refined expansion (\ref{eq:VolCnAsym}). The leading-order error grows without bound (the dip near $n\approx17$ is a sign change), whereas the refined expansion has error below $0.02$ for all $n\geq5$, decaying at the rate $O(d^{-1/2})$.\label{fig:VolCorr}}}
\end{figure}

\begin{table}[htbp!]
    \centering
    \begin{scriptsize}
    \setlength{\tabcolsep}{3pt}
    \begin{tabularx}{\textwidth}{p{1mm}p{5mm}p{7mm}p{1mm}>{\hsize=1.9\hsize}Y>{\hsize=0.55\hsize}Xp{1mm}p{1mm}>{\hsize=0.55\hsize}X}
    \toprule 
    &$n$ & $d(n)$ && \multicolumn{2}{c}{$\operatorname{Vol}(\mathcal{E}_n)$} &&& $\operatorname{Vol}(\mathcal{E}_n)/2^d$ \\ 
    \cmidrule{1-3}  \cmidrule{5-6}  \cmidrule{8-9}  
    & 2  & 1 &&2& \num{2.000} &&& 1.000 \\[2mm] 
    & 3  & 3 &&$\frac{1}{2}\pi^{2}$& \num[scientific-notation = fixed]{4.935}                               &&& \num[scientific-notation=fixed]{0.617} \\[2mm] 
    & 4  & 6 &&$\tfrac{2^{5}}{3^{3}}\pi^{2}$& \num[scientific-notation=fixed]{11.697}                     &&& \num[scientific-notation =fixed]{0.183} \\[2mm] 
    & 5  & 10 &&$\tfrac{3}{2^{7}}\pi^{6}$& 22.533 &&& \num{2.20e-2} \\[2mm]
    & 6  & 15 &&$\tfrac{2^{13}}{3^{4}5^{5}}\pi^{6}$ & 31.114 &&& \num{9.50e-4} \\[2mm]
    & 7 & 21 &&$\tfrac{5}{2^{11}3^{4}}\pi^{12}$ & 27.859 &&& \num{1.33e-5} \\[2mm]
    & 8 & 28 &&$\tfrac{2^{24}}{3^{4}5^{6}7^{7}}\pi^{12}$                                                    & 14.877 &&& \num{5.54e-8} \\[2mm] 
    & 9  & 36 && $\tfrac{5^{2}7}{2^{32}3^{4}}\pi^{20}$ & 4.412 &&& \num{6.42e-11} \\[2mm] 
    & 10 & 45 &&$\tfrac{2^{40}}{3^{22}5^{7}7^{8}}\pi^{20}$ & 0.682 &&& \num{1.94e-14} \\[2mm]
    & 11 & 55 & &$\frac{7^{2}}{2^{40}3^{2}5^{7}}\pi^{30}$& 0.0521 &&& \num{1.44e-18} \\[2mm] 
    & 12 & 66 &&$\tfrac{2^{59}}{3^{24}5^{7}7^{9}11^{11}}\pi^{30}$& 0.00186 &&& \num{2.53e-23} \\[2mm] 
    & 13 & 78 &&$\tfrac{11^1 7^{3}}{2^{61}3^{12}5^{7}}\pi^{42}$& \num{2.99e-5} &&& \num{9.90e-29} \\[2mm] 
    & 14 & 91 &&$\tfrac{2^{82}}{3^{25}5^{7}7^{10}11^{12}13^{13}}\pi^{42}$& \num{2.07e-7} &&& \num{8.34e-35} \\[2mm] 
    & 15 & 105 &&$\frac{11^2 13}{2^{72}3^{11}5^{7}7^{10}}\pi^{56}$& \num{5.90e-10} &&& \num{1.45e-41}\\[2mm] 
    & 16 & 120 &&$\tfrac{2^{108}}{3^{41}5^{22}7^{10}11^{13}13^{14}}\pi^{56}$& \num{6.73e-13} &&& \num{5.06e-49} \\[2mm] 
    & 18 & 153 &&$\tfrac{2^{140}}{3^{43}5^{23}7^{10}11^{14}13^{15}17^{17}}\pi^{72}$& \num{4.89e-20}           &&& \num{4.28e-66} \\[2mm]
    & 20 & 190 &&$\frac{2^{175}}{3^{43}5^{24}7^{10}11^{15}13^{16}17^{18}19^{19}}\pi^{90}$& \num{6.21e-29} &&& \num{3.96e-86} \\[2mm] 
    & 30 & 435 && $\tfrac{2^{407}}{   3^{149}5^{78}7^{34}11^{16}13^{19}17^{23}19^{24}23^{26}29^{29}
  }\pi^{210}$& \num{8.80e-103} &&& \num{9.92e-234}\\[2mm]
    \bottomrule \end{tabularx}   
    \end{scriptsize}
    \caption{The table reports the intrinsic dimension $d(n)=n(n-1)/2$, the closed-form expression for $\operatorname{Vol}(\mathcal{E}_n)$, its numerical value, and its volume relative to the enclosing hypercube, $\operatorname{Vol}(\mathcal{E}_n)/2^d$, for $n\in \{2,3,\ldots,16,18,20,30\}$.\label{tab:CnVolume}}
\end{table}

The recursions that increase the dimension by two are relatively simple, but differ for even and odd dimensions.
\begin{lemma}[Two-step recursions]\label{lem:2step}Suppose that $n=2k$ is even. Then
\begin{equation}    
\frac{\operatorname{Vol}(\mathcal{E}_{n+1})}{\operatorname{Vol}(\mathcal{E}_{n-1})  }
        =    
            \pi^n\frac{k}{n^{n}}\binom{n}{k}  
      \quad\text{and}\quad
     \frac{\operatorname{Vol}(\mathcal{E}_{n+2})}{\operatorname{Vol}(\mathcal{E}_n) }
     =   \pi^n \frac{2^{2n+1}}{(n+1)^{n+1}}\frac{1}{\binom{n}{k}}.   \label{eq:Skip2}
\end{equation}
\end{lemma}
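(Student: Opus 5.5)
Both identities follow from the one-step recursion of Proposition~\ref{prop:VolumeCn}, $\operatorname{Vol}(\mathcal{E}_{m+1})=\operatorname{Vol}(\mathcal{E}_m)\,[B(\tfrac{m+1}{2},\tfrac12)]^m$, applied twice. With $n=2k$ this gives
$$
\frac{\operatorname{Vol}(\mathcal{E}_{n+1})}{\operatorname{Vol}(\mathcal{E}_{n-1})}=\bigl[B(k,\tfrac12)\bigr]^{n-1}\bigl[B(k+\tfrac12,\tfrac12)\bigr]^{n},\qquad
\frac{\operatorname{Vol}(\mathcal{E}_{n+2})}{\operatorname{Vol}(\mathcal{E}_{n})}=\bigl[B(k+\tfrac12,\tfrac12)\bigr]^{n}\bigl[B(k+1,\tfrac12)\bigr]^{n+1}.
$$
In each ratio the two Beta functions have arguments differing by $\tfrac12$. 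The plan is to combine their product into a closed form before taking powers, and then to account for the one leftover Beta factor separately.

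The key identity is that, for any $a$,
$$
B(a,\tfrac12)B(a+\tfrac12,\tfrac12)=\frac{\Gamma(a)\sqrt\pi}{\Gamma(a+\tfrac12)}\cdot\frac{\Gamma(a+\tfrac12)\sqrt\pi}{\Gamma(a+1)}=\frac{\pi}{a},
$$
because the Gamma function telescopes. For the first ratio, take $a=k$. Then
$$
\bigl[B(k,\tfrac12)\bigr]^{n-1}\bigl[B(k+\tfrac12,\tfrac12)\bigr]^{n}=(\pi/k)^{n-1}B(k+\tfrac12,\tfrac12).
$$
The leftover factor is $B(k+\tfrac12,\tfrac12)=\pi\binom{2k}{k}4^{-k}$, which is the central binomial evaluation already used in the proof of Proposition~\ref{prop:VolumeCn}. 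Substituting gives
$$
\pi^{n}k^{1-n}\binom{n}{k}2^{-n}=\pi^{n}\frac{k}{(2k)^n}\binom{n}{k},
$$
since $k^{n}2^{n}=n^{n}$. This is the first formula.

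For the second ratio, take $a=k+\tfrac12$. The product becomes
$$
\Bigl(\frac{2\pi}{n+1}\Bigr)^{n}B(k+1,\tfrac12).
$$
The leftover factor is $B(k+1,\tfrac12)=\dfrac{2\cdot 4^{k}}{(2k+1)\binom{2k}{k}}$, which again follows from $\Gamma(k+\tfrac32)=\tfrac{(2k+2)!}{4^{k+1}(k+1)!}\sqrt\pi$. Multiplying,
$$
\pi^{n}\frac{2^{n}}{(n+1)^{n}}\cdot\frac{2\cdot 2^{n}}{(n+1)\binom{n}{k}}=\pi^n\frac{2^{2n+1}}{(n+1)^{n+1}}\frac{1}{\binom nk},
$$
which is the second formula.

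There is no real obstacle. The only step needing care is the bookkeeping of which Beta factor carries the exponent $n$ versus $n-1$ (respectively $n+1$): the recursion step from $\mathcal{E}_m$ to $\mathcal{E}_{m+1}$ uses exponent $m$ and argument $\tfrac{m+1}{2}$. As a final check, I would verify the formulas at $k=1$: the first gives $\operatorname{Vol}(\mathcal{E}_3)/\operatorname{Vol}(\mathcal{E}_1)=\pi^2\cdot\tfrac14\cdot2=\pi^2/2$, and the second gives $\operatorname{Vol}(\mathcal{E}_4)/\operatorname{Vol}(\mathcal{E}_2)=\pi^2\cdot 32/(27\cdot2)$, i.e.\ $\operatorname{Vol}(\mathcal{E}_4)=\tfrac{2^5}{3^3}\pi^2$. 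Both agree with Table~\ref{tab:CnVolume}.
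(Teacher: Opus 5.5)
Your proof is correct and is essentially the paper's argument. Both apply the one-step recursion twice, telescope the Gamma functions (which you package neatly as $B(a,\tfrac12)B(a+\tfrac12,\tfrac12)=\pi/a$), and evaluate the leftover Beta factor via the duplication formula $\Gamma(k+\tfrac12)=4^{-k}\sqrt{\pi}\,(2k)!/k!$; your exponent bookkeeping and the $k=1$ check are also right.
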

Combining the {\em even} and {\em odd} two-step recursions in (\ref{eq:Skip2})
 leads to the following exact recursion for the log-volume, which has computationally simple increments.
\begin{proposition}[Exact recursion]\label{prop:exact}Let  $L_n\equiv \log \operatorname{Vol}(\mathcal{E}_{n})$, such that $L_2=\log 2$ and set $L_1=L_0\equiv 0$, then 
$$
L_{n+2}=-L_{n+1}+L_{n}+L_{n-1}+\Delta_n,\quad n\geq 1,
$$
where $\Delta_n=2n\log(2\pi)-(n-1)\log(n)-(n+1)\log(n+1)$.
\end{proposition}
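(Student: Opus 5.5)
The plan is to bypass Lemma~\ref{lem:2step} and derive the recursion directly from the one-step recursion of Proposition~\ref{prop:VolumeCn}. Taking logarithms there, $L_{k+1}-L_k=k\,b_{k+1}$ with $b_m\equiv\log B(\tfrac{m}{2},\tfrac12)$. This holds for $k\geq1$, and also for $k=0$ under the convention $L_1=L_0=0$, because the coefficient vanishes. The claimed identity is equivalent to
$$
(L_{n+2}-L_n)+(L_{n+1}-L_{n-1})=\Delta_n,
$$
a sum of two two-step increments. I would expand each increment as a sum of two one-step increments:
$$
L_{n+2}-L_n=n\,b_{n+1}+(n+1)\,b_{n+2},\qquad L_{n+1}-L_{n-1}=(n-1)\,b_n+n\,b_{n+1}.
$$

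The key algebraic input is the product identity for consecutive half-integer Beta values. From $B(a,\tfrac12)=\Gamma(a)\Gamma(\tfrac12)/\Gamma(a+\tfrac12)$, the middle Gamma factors telescope:
$$
B(a,\tfrac12)B(a+\tfrac12,\tfrac12)=\frac{\pi\,\Gamma(a)}{\Gamma(a+1)}=\frac{\pi}{a}.
$$
With $a=m/2$ this gives $b_m+b_{m+1}=\log(2\pi/m)$. In the sum of the two increments, the coefficient of $b_{n+1}$ is $2n=(n-1)+(n+1)$. The total can therefore be regrouped as
$$
(n-1)(b_n+b_{n+1})+(n+1)(b_{n+1}+b_{n+2})=(n-1)\log\tfrac{2\pi}{n}+(n+1)\log\tfrac{2\pi}{n+1}.
$$
Expanding the right-hand side gives exactly $2n\log(2\pi)-(n-1)\log n-(n+1)\log(n+1)=\Delta_n$.

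The only point requiring care, and the place I expect the main obstacle, is the boundary case $n=1$. There the recursion involves $L_0$, which lies outside the range of Proposition~\ref{prop:VolumeCn}. In that case the term $(n-1)b_n=0\cdot\log\pi$ drops out, so the formula is consistent with the convention $L_0=L_1=0$. As a sanity check, $\Delta_1=2\log\pi$ and $L_3=-\log2+2\log\pi=\log(\pi^2/2)$, which matches Table~\ref{tab:CnVolume}. For all $n\geq2$ every index involved is at least $1$, and the argument above applies verbatim.
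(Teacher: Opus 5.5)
Your proof is correct, and it takes a different route from the paper's.

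The paper first derives the parity-dependent two-step ratios of Lemma~\ref{lem:2step}. These are closed forms in $\binom{n}{k}$, obtained from Gamma manipulations and the duplication formula. It then multiplies them in the even case. For odd $n=2k+1$ it pairs the second formula at $2k$ with the first at $2k+2$. In both parities it checks that the product equals $(2\pi)^{2n}/(n^{n-1}(n+1)^{n+1})=e^{\Delta_n}$.

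You instead split the doubled coefficient $2n=(n-1)+(n+1)$ of $b_{n+1}$ and apply the single telescoping identity $B(a,\tfrac12)B(a+\tfrac12,\tfrac12)=\pi/a$, which is just $\Gamma(a+1)=a\Gamma(a)$. This needs neither the duplication formula nor a case split on parity. It also explains directly why $\Delta_n=(n-1)\log\tfrac{2\pi}{n}+(n+1)\log\tfrac{2\pi}{n+1}$.

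Your handling of the boundary case is also more transparent. For $n=1$ ($k=0$), the paper's odd case evaluates Lemma~\ref{lem:2step} at $n=0$. That lies outside the range of the one-step recursion and implicitly requires $\operatorname{Vol}(\mathcal{E}_0)=1$. You make the role of the convention $L_0=0$ explicit through the vanishing term $0\cdot b_1$.

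What the paper's route gives in return is the pair of closed-form two-step recursions themselves, which it reports as results of independent interest. Your argument does not produce those.
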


\bigskip
\noindent{}{\bf Proof of Lemma~\ref{lem:2step}.}
If $n$ is even, then
\begin{eqnarray*}
    \frac{\operatorname{Vol}(\mathcal{E}_{n+1})}{\operatorname{Vol}(\mathcal{E}_{n-1})  }
        &=& \left(B(\tfrac{n+1}{2},\tfrac{1}{2})\right)^n
            \left(B(\tfrac{n}{2},\tfrac{1}{2})\right)^{n-1}\\
        &=& \left(\tfrac{\Gamma(k+\tfrac{1}{2})\Gamma(\tfrac{1}{2})}{\Gamma(k+1)}    \right)^n 
        \left(\tfrac{\Gamma(k)\Gamma(\tfrac{1}{2})}{\Gamma(k+\tfrac{1}{2})}    \right)^{n-1}\\
        &=&{\pi^{n-1/2}}\frac{\Gamma(k+\tfrac{1}{2})}{k!}
        k^{-n+1} \\
        &=&{\pi^{n}}\frac{2^{-n}n!}{(k!)^2}
        k^{-n+1} \\
        &=&{\pi^{n}}\frac{k}{n^n}\binom{n}{k}
    \end{eqnarray*}
    where we used  $\Gamma(k+\tfrac{1}{2})=2^{-2k}\sqrt{\pi}(2k)!/k!$ in the second-to-last equality, which follows from the duplication formula $\Gamma(k+\tfrac{1}{2})\Gamma(k+1)=2^{-2k}\sqrt{\pi}(2k)!.$
    Next
    \begin{eqnarray*}        
    \frac{\operatorname{Vol}(\mathcal{E}_{n+2})}{\operatorname{Vol}(\mathcal{E}_{n})  }
        &=& \left(B(\tfrac{n+2}{2},\tfrac{1}{2})\right)^{n+1}
            \left(B(\tfrac{n+1}{2},\tfrac{1}{2})\right)^n\\
        &=& \left(\tfrac{\Gamma(k+1)\Gamma(\tfrac{1}{2})}{\Gamma(k+\tfrac{3}{2})}    \right)^{n+1} 
        \left(\tfrac{\Gamma(k+\tfrac{1}{2})\Gamma(\tfrac{1}{2})}{\Gamma(k+1)}    \right)^n \\
        &=&\pi^{n+1/2}\frac{\Gamma(k+1)}{(k+\tfrac{1}{2})^{n+1} \Gamma(k+\tfrac{1}{2}) }    \\
        &=&\pi^{n}\frac{2^{n+1}}{(n+1)^{n+1}}\frac{\sqrt{\pi}\Gamma(k+1)^2}{ \Gamma(k+\tfrac{1}{2})\Gamma(k+1) }  \\
        &=&\pi^{n}\frac{2^{2n+1}}{(n+1)^{n+1}}\frac{(k!)^2}{ (2k)! }\\
        &=&\pi^{n}\frac{2^{2n+1}}{(n+1)^{n+1}}\frac{1}{ \binom{n}{k}},
    \end{eqnarray*}
   where we used the duplication formula  in the second-to-last equality.\hfill$\square$

\noindent{}{\bf Proof of Proposition~\ref{prop:exact}.}
 For $n=2k$ even, Lemma~\ref{lem:2step} gives directly
$$
\frac{\operatorname{Vol}(\mathcal{E}_{n+1})}{\operatorname{Vol}(\mathcal{E}_{n-1})  }
\frac{\operatorname{Vol}(\mathcal{E}_{n+2})}{\operatorname{Vol}(\mathcal{E}_n) } = \pi^{2n}
\frac{k2^{2n+1}}{n^{n}(n+1)^{n+1}}
=  \frac{(2\pi)^{2n}}{ n^{n-1}(n+1)^{n+1}}.
$$
For $n=2k+1$ odd, the same product equals $\frac{\operatorname{Vol}(\mathcal{E}_{2k+2})}{\operatorname{Vol}(\mathcal{E}_{2k})} \frac{\operatorname{Vol}(\mathcal{E}_{2k+3})}{\operatorname{Vol}(\mathcal{E}_{2k+1})}$; this is the second formula of Lemma~\ref{lem:2step} evaluated at $2k$ times the first formula evaluated at $2k+2$, which simplifies to the same expression $\frac{(2\pi)^{2n}}{n^{n-1}(n+1)^{n+1}}$. Taking logs and rearranging gives the result in both cases.\hfill$\square$

\section{Illustration of the Point-Process Limit}\label{sec:pppfig}

Figure~\ref{fig:PPP} illustrates Theorem~\ref{thm:PPP}: each step function is a single realization of $\Xi_n(t,\infty)$, the number of centered squared correlations exceeding $t$, and the steps cluster around the tail mean measure $\Lambda(t,\infty)$ as $n$ grows.

\begin{figure}[htbp!]
\centering{}\includegraphics[width=0.8\textwidth]{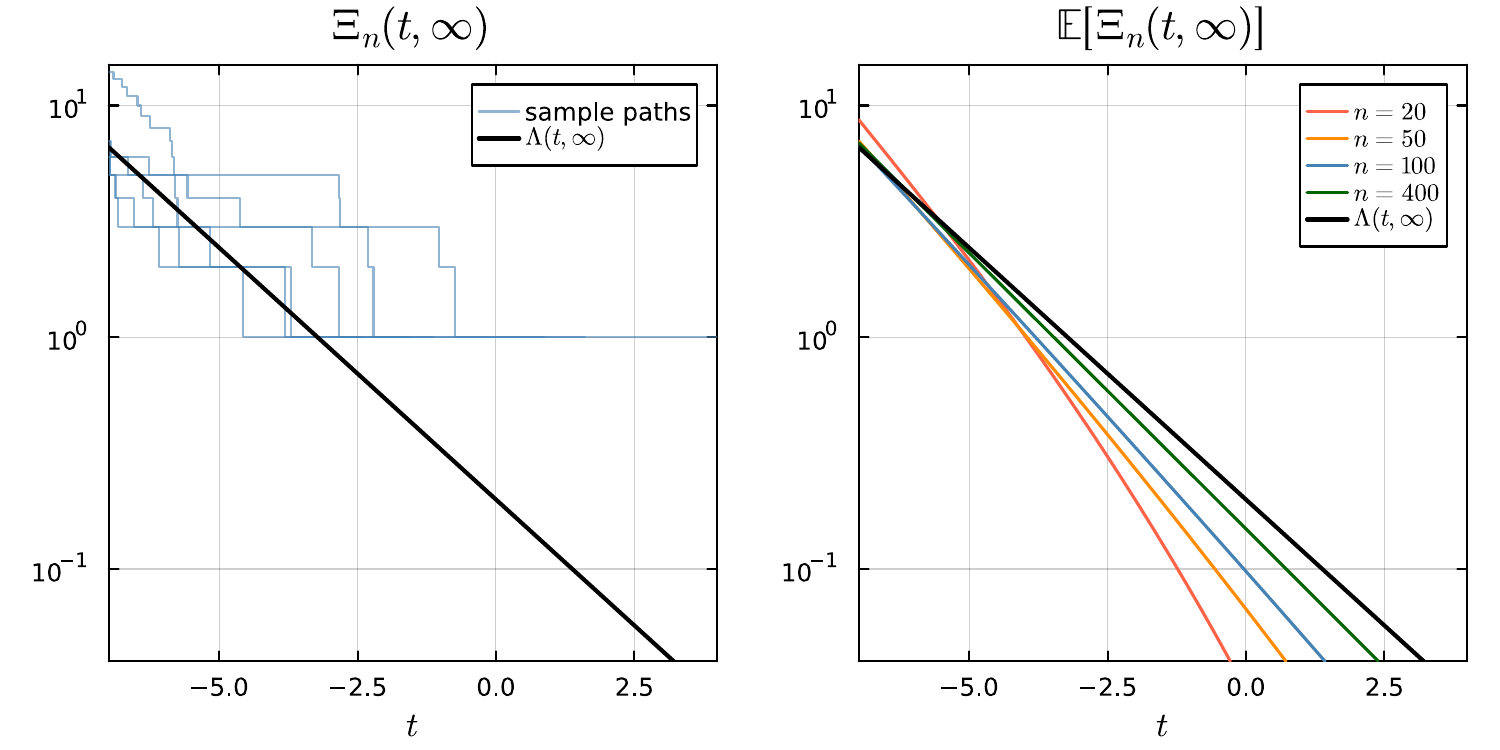}
\caption{{\small{}The exceedance counting process $\Xi_n(t,\infty)=\#\{i<j:(n+1)C_{ij}^2-\mu_n>t\}$ for a uniformly distributed correlation matrix $C\in\mathcal{E}_n$, plotted as a decreasing step function of $t$. The smooth curve is the tail mean measure $\Lambda(t,\infty)=(8\pi)^{-1/2}e^{-t/2}$ of the limiting $\operatorname{PPP}(\Lambda)$. \textit{Left:} six independent realizations at $n=150$, illustrating the Poisson character of the variability. \textit{Right:} the Monte Carlo mean of $\Xi_n(t,\infty)$ over 500 draws for $n=20,50,100,400$, illustrating convergence to the limit.\label{fig:PPP}}}
\end{figure}

\section{Simulation Evidence on the Repair Cost}\label{sec:ncmsim}

Theorem~\ref{thm:nearest} shows that the squared repair cost is asymptotically at least one-half of the squared Frobenius norm of the off-diagonal part, and whether the unit-diagonal constraint makes the limiting cost strictly larger than one-half is an open question. Table~\ref{tab:ncm} reports the realized repair ratio $R_n=\|\tilde{C}-C^{*}\|_F^2/\|\tilde{C}-I_n\|_F^2$ alongside the PSD lower bound $P_n=\sum_i\min\{\lambda_i(\tilde{C}),0\}^2/\|\tilde{C}-I_n\|_F^2$ used in the proof, for two entry distributions. The nearest correlation matrix $C^{*}$ is computed by the alternating-projections algorithm of \citet{Higham:2002}.

Two features are visible. First, $P_n$ approaches its limit $\tfrac12$ slowly: the finite-$n$ PSD bound is $\int\min\{s+\tfrac{1}{\sigma\sqrt{n}},0\}^2\rho_{\mathrm{sc}}(s)ds+o(1)$, and the endpoint correction is still non-negligible at these dimensions. Second, $R_n$ exceeds $P_n$ at every $n$ with a widening gap, consistent with a limit strictly above $\tfrac12$; the simulations do not determine that limit.

\begin{table}[htbp!]
\centering
\begin{tabular}{@{}lrrrr@{}}
\toprule
& \multicolumn{2}{c}{Rademacher, $\sigma^2=1$} & \multicolumn{2}{c}{Uniform$[-1,1]$, $\sigma^2=1/3$}\\
\cmidrule(lr){2-3}\cmidrule(lr){4-5}
$n$ & $R_n$ & $P_n$ & $R_n$ & $P_n$ \\
\midrule
$50$  & 0.667 & 0.387 & 0.510 & 0.317 \\
$100$ & 0.746 & 0.420 & 0.614 & 0.366 \\
$200$ & 0.808 & 0.442 & 0.702 & 0.402 \\
$400$ & 0.857 & 0.458 & 0.773 & 0.430 \\
\midrule
$n\rightarrow\infty$ & -- & $\tfrac12$ & -- & $\tfrac12$ \\
\bottomrule
\end{tabular}
\caption{Realized repair ratio $R_n$ and the PSD lower bound $P_n$ from the proof of Theorem~\ref{thm:nearest}, averaged over $20$ independent replications for $n\leq200$ and $5$ for $n=400$; Monte Carlo standard deviations are below $0.008$ throughout. The PSD bound $P_n$ converges to $\tfrac12$ (Theorem~\ref{thm:nearest}); the realized ratio exceeds it at every dimension and the gap widens with $n$, consistent with a limit strictly above $\tfrac12$.\label{tab:ncm}}
\end{table}

\FloatBarrier
\section{On the Centering in the Log-Determinant CLT}\label{sec:centering}

Remark~\ref{rem:logdet} states the CLT for $\log\det C_n$ under the uniform law with the centering $-n+\tfrac12\log(2n)+\tfrac{1+\gamma}{2}$, which is the exact mean up to $o(1)$, as computed from the digamma expression in Section~\ref{sec:LKJ}. The centering displayed in \citet[theorem~3]{HaneaNane:2018} is $-\bigl(n+\tfrac12\log(n/2)+\tfrac{\gamma}{2}+1\bigr)$, which differs in the sign of the logarithmic term. We explain why we believe this is a misprint.

A Lyapunov CLT centers the sum $\log\det C_n=\sum_{j=1}^{n-1}\log B_j$ at its exact mean, the digamma expression above. The simulations reported by \citet{HaneaNane:2018} agree: their standardized means of about $1.27$ for $(\log\det C_n+n)/\sqrt{2\log(n/2)}$ at $n=400,500$ match the $1.269$ and $1.277$ predicted here, whereas the displayed centering predicts the opposite sign.

The misprint appears to have propagated: \citet{ParolyaHeinyKurowicka:2024} observe that their CLT for sample correlation matrices reproduces the displayed centering upon setting $p=n$, but that specialization describes the ensemble with $T=n$ observations, for which $\mathbb{E}[\log\det C_n]=-n-\tfrac12\log n+O(1)$ is indeed correct; the uniform law corresponds to $T=n+1$ (Proposition~\ref{prop:WishartGram}), where their general formula yields $+\tfrac12\log(n/2)$, in agreement with Remark~\ref{rem:logdet}. At the critical edge, one additional observation flips the sign of the logarithmic term.

\end{document}